\title[The generalized Alice HH vs Bob HT problem]
{The generalized Alice HH vs Bob HT problem}
\date{March 24, 2025}
\author{Svante Janson}
\thanks{SJ is supported by the Knut and Alice Wallenberg Foundation
and
the Swedish Research Council
}
\address{\text{SJ:} Department of Mathematics, Uppsala University, PO Box 480,
SE-751~06 Uppsala, Sweden}
\email{svante.janson@math.uu.se}
\newcommand\urladdrx[1]{{\urladdr{\def~{{\tiny$\sim$}}#1}}}
\author{Mihai Nica}
\thanks{MN is supported by the National Sciences and Engineering Research Council of Canada}
\address{\text{MN:} Department of Mathematics and Statistics, University of Guelph, 50 Stone Road E, Guelph, ON N1G 2W1, Canada}
\email{nicam@uoguelph.ca}
\author{Simon Segert}
\address{\text{SS:} New York, NY, USA}
\email{simonsegert@gmail.com}
\subjclass[2020]{} 
\numberwithin{equation}{section}
\renewcommand\le{\leqslant}
\renewcommand\ge{\geqslant}
\renewcommand\leq{\leqslant}
\renewcommand\geq{\geqslant}
\theoremstyle{plain}
\newtheorem{theorem}{Theorem}[section]
\newtheorem{lemma}[theorem]{Lemma}
\newtheorem{proposition}[theorem]{Proposition}
\newtheorem{corollary}[theorem]{Corollary}
\theoremstyle{definition}
\newcommand\xqed[1]{%
    \leavevmode\unskip\penalty9999 \hbox{}\nobreak\hfill
    \quad\hbox{#1}}
\newtheorem{exampleqqq}[theorem]{Example}
\newenvironment{example}{\begin{exampleqqq}}
  {\xqed{$\triangle$}\end{exampleqqq}}
\newtheorem{remarkqqq}[theorem]{Remark}
\newenvironment{remark}{\begin{remarkqqq}}
  {\xqed{$\triangle$}\end{remarkqqq}}
\theoremstyle{remark}
\newcounter{dummy}
\newcommand\myitem[1][]{\item[#1]\refstepcounter{dummy}\def\@currentlabel{#1}}
\newenvironment{romenumerate}[1][-10pt]{
\addtolength{\leftmargini}{#1}\begin{enumerate}
 }{\end{enumerate}}
\newenvironment{alphenumerate}[1][-10pt]{
\addtolength{\leftmargini}{#1}\begin{enumerate}
 }{\end{enumerate}}
\newcounter{oldenumi}
{\setcounter{oldenumi}{\value{enumi}}
\begin{romenumerate} \setcounter{enumi}{\value{oldenumi}}}
{\end{romenumerate}}
\newcounter{thmenumerate}
\newcounter{xenumerate}   
\newcommand{\refT}[1]{Theorem~\ref{#1}}
\newcommand{\refTs}[1]{Theorems~\ref{#1}}
\newcommand{\refC}[1]{Corollary~\ref{#1}}
\newcommand{\refL}[1]{Lemma~\ref{#1}}
\newcommand{\refLs}[1]{Lemmas~\ref{#1}}
\newcommand{\refR}[1]{Remark~\ref{#1}}
\newcommand{\refS}[1]{Section~\ref{#1}}
\newcommand{\refSs}[1]{Sections~\ref{#1}}
\newcommand{\refSS}[1]{Section~\ref{#1}}
\newcommand{\refSSS}[1]{Section~\ref{#1}}
\newcommand{\refProp}[1]{Proposition~\ref{#1}}
\newcommand{\refE}[1]{Example~\ref{#1}}
\newcommand{\refEs}[1]{Examples~\ref{#1}}
\newcommand{\refApp}[1]{Appendix~\ref{#1}}
\xdef\klockan{\the\count1.0\the\count255}
\xdef\klockan{\the\count1.\the\count255}\fi
\newcommand{\sumkn}{\sum_{k=1}^n}
\newcommand{\prodkn}{\prod_{k=1}^n}
\newcommand\set[1]{\ensuremath{\{#1\}}}
\newcommand\bigset[1]{\ensuremath{\bigl\{#1\bigr\}}}
\newcommand\xpar[1]{(#1)}
\newcommand\bigpar[1]{\bigl(#1\bigr)}
\newcommand\Bigpar[1]{\Bigl(#1\Bigr)}
\newcommand\bigsqpar[1]{\bigl[#1\bigr]}
\newcommand\sqpar[1]{[#1]}
\newcommand\bigabs[1]{\bigl\lvert#1\bigr\rvert}
\newcommand\lrabs[1]{\left\lvert#1\right\rvert}
\def\rompar(#1){\textup(#1\textup)}    
\newcommand\xfrac[2]{#1/#2}
\newcommand\xqfrac[2]{#1/(#2)}
\def\xexp(#1){e^{#1}}
\newcommand\floor[1]{\lfloor#1\rfloor}
\newcommand\ntoo{\ensuremath{{n\to\infty}}}
\newcommand\norm[1]{\lVert#1\rVert}
\newcommand\upto{\nearrow}
\newcommand\punkt{\xperiod}    
\newcommand\iid{i.i.d\punkt}    
\newcommand\eg{e.g\punkt}
\newcommand\ii{\mathrm{i}}
\newcommand{\tend}{\longrightarrow}
\newcommand\dto{\overset{\mathrm{d}}{\tend}}
\newcommand\bbR{\mathbb R}
\newcommand\bbC{\mathbb C}
\newcommand\bbN{\mathbb N}
\newcommand\bbZ{\mathbb Z}
\newcounter{CC}
\newcounter{cc}
\newcommand\E{\operatorname{\mathbb E}{}} 
\renewcommand\P{\operatorname{\mathbb P{}}}
\newcommand\Var{\operatorname{Var}}
\newcommand\Bin{\operatorname{Bin}}
\newcommand\Tr{\operatorname{Tr}}
\newcommand\ran{\operatorname{ran}}
\newcommand\gd{\delta}
\newcommand\gD{\Delta}
\newcommand\gf{\varphi}
\newcommand\gam{\gamma}
\newcommand\gk{\varkappa}
\newcommand\kk{\kappa}
\newcommand\gl{\lambda}
\newcommand\gL{\Lambda}
\newcommand\go{\omega}
\newcommand\gs{\sigma}
\newcommand\gss{\sigma^2}
\newcommand\gth{\vartheta}
\newcommand\eps{\varepsilon}
\renewcommand\phi{\xxx}  
\newcommand\cA{\mathcal A}
\newcommand\cE{\mathcal E}
\newcommand\cQ{\mathcal Q}
\newcommand\cW{\mathcal W}
\newcommand\tP{\widetilde P}
\newcommand\tS{{\widetilde S}}
\newcommand\indic[1]{\boldsymbol1_{#1}} 
\newcommand\one{\boldsymbol1}
\newcommand\qw{^{-1}}
\newcommand\qqw{^{-1/2}}
\newcommand\dd{\,\mathrm{d}}
\newcommand\ddx{\mathrm{d}}
\newcommand{\chf}{characteristic function}
\newcommand\lhs{left-hand side}
\newcommand\rhs{right-hand side}
\newcommand\xoo{_1^\infty}
\newcommand\hP{\widehat P}
\newcommand\hQ{\widehat Q}
\newcommand\hW{\widehat W}
\newcommand\hcW{\widehat {\cW}}
\newcommand\rx{\tilde r}
\newcommand\tr{^{\mathsf t}}
\newcommand\gx{^{\mathsf g}}
\newcommand\sH{\mathsf{H}}
\newcommand\sT{\mathsf{T}}
\newcommand\hS{\widehat S}
\newcommand\xgss{\widetilde\sigma^2}
\newcommand\xmu{\widetilde\mu}
\newcommand\diag{\operatorname{Diag}}
\newcommand\QX{\Pi}
\newcommand\ba{\bar a}
\begin{document}

\begin{abstract} 
In 2024, Daniel Litt posed a simple coinflip game pitting Alice's
  ``Heads-Heads'' vs Bob's ``Heads-Tails'': who is more likely to win if
  they score 1 point per occurrence of their substring in a sequence of $n$
  fair coinflips? This attracted over 1 million views on X 
and  quickly spawned several articles explaining the counterintuitive solution.
We study the generalized
  game, where the set of coin outcomes, $\{ \text{Heads}, \text{Tails} \}$,
  is generalized to an arbitrary finite alphabet $\cA$, and where Alice's and
  Bob's substrings are any finite $\cA$-strings
  of the same length. 
  We find that the winner of
  Litt's game can be determined by a single quantity which measures the
  amount of prefix/suffix self-overlaps in each string; whoever's string has
  more overlaps \emph{loses}. For example, ``Heads-Tails'' beats
  ``Heads-Heads'' in the original problem because ``Heads-Heads'' has a
  prefix/suffix overlap of length 1 while ``Heads-Tails'' has none. The
  method of proof is to develop a precise Edgeworth expansion for discrete
  Markov chains, and apply this to calculate Alice's and Bob's probability
  to win the game correct to order $O(1/n)$.
 \end{abstract}

\maketitle

\section{Introduction}\label{S:intro}

On March 16, 2024 Daniel Litt posted the following brainteaser on 
X \cite{LittX}:
\begin{quote}
Flip a fair coin 100 times—it gives a sequence of heads (H) and tails
(T). For each HH in the sequence of flips, Alice gets a point; for each HT,
Bob does, so e.g.\ for the sequence THHHT Alice gets 2 points and Bob gets 1
point. Who is most likely to win? 
\end{quote}
The post gained popularity due to the deceptively unintuitive nature of the
problem, with plurality of respondents in an attached poll believing
(incorrectly) that the game is fair, and the correct answer being chosen the least often. 

This problem turned out to have a surprisingly rich mathematical
structure. In short order, a succession 
of papers appeared:
\citet{Zeilberger}, \citet{Segert}, and \citet{Grimmett}, 
each of which rigorously analyzed the problem via different
techniques (respectively, symbolic computation, asymptotic analysis, and
probability theory). We highlight here two key results. Firstly, 
Litt's qualitative question ``Who is more likely to win''
was answered for an arbitrary number $n$ of coin flips:
Bob is strictly favored as long as $n$ 
is at least 3 \cite{Segert,Grimmett}. 
Secondly, 
and more immediately relevant to the present work, 
for the quantitative question of finding the winning probabilities, 
the following asymptotic as $\ntoo$ was found
by \cite{Zeilberger,Segert,Grimmett}
(by different methods):
\begin{align}\label{grg1}
  \P(\text{\rm Bob wins}) -\P(\text{\rm Alice wins}) &
\sim 
\frac{1}{2\sqrt{\pi n}}
\end{align}
and, more precisely,
\begin{align}\label{grg2}
\frac12-  \P(\text{\rm Alice wins}) 
\sim 
\frac{3}{4\sqrt{\pi n}},
\qquad
\frac12-  \P(\text{\rm Bob wins}) 
\sim 
\frac{1}{4\sqrt{\pi n}}.
\end{align}

A natural generalization of Litt's problem,
consider for example by \cite{Zeilberger}, 
is to let Alice's and Bob's
strings be arbitrary finite sequences of the same length
$\ell$ in an arbitrary finite alphabet $\cA$; we then assume that 
Alice and Bob do a generalized coin tossing where a sequence of random
letters are generated, with the letters independent and uniformly
distributed.
(See \refS{SAB} for details.
We leave extensions to non-uniform letter distributions to the reader.)
We call also this generalization
\emph{Litt's game}.

This generalization to arbitrary strings was considered by \citet{Basdevant}
who
established, by a combinatorial argument, a condition under which such a
game is exactly fair (see \refR{Rfair} below). 
The main purpose of the present paper is to
establish asymptotics extending \eqref{grg1}--\eqref{grg2}
for the generalized problem. 

Somewhat surprisingly, with the
exception of a small set of pathological strings, we find that who has the
advantage in the generalized Litt's game depends \emph{only} on how
Alice's/Bob's string overlap themselves in their prefixs/suffixes. More
precisely, for a string $A=a_1 a_2 \ldots a_\ell$ of length $\ell$ from an
alphabet $\cA$ with $q$ letters, one can calculate a single quantity
$\theta_{AA} \in \mathbb{R}$ that measures the size of the prefix/suffix
overlap of $A$ defined as follows:
\begin{align}\label{th_def}
\Theta(A,A)&:=\set{ 1\le k\le \ell-1: a_{\ell-k+1}\dotsm a_\ell = a_1\dotsm a_k},
\\\label{thetaAA}
\theta_{AA}&:
=\sum_{k\in\Theta(A,A)} q^{k-\ell}
=q^{-\ell}\sum_{k\in\Theta(A,A)} q^{k}.
\end{align}
(This is a special case of $\theta_{UV}$ between two strings $U,V$
  defined in \eqref{tau}.) 
  We will prove the following result, which shows
  that the asymptotic winner of Litt's game is determined by comparing the
  value $\theta_{AA}$ of Alice's string to $\theta_{BB}$ of Bob's string;
  whoever's $\theta$ overlap value is larger \emph{loses} Litt's game
  asymptotically. 

\begin{theorem}\label{TAB}
Let Alice and Bob play Litt's game  with distinct words $A$ and $B$ of the
same length $\ell$ in an alphabet $\cA$ with $q$ letters, and assume that
$n$ letters are chosen  at random, uniformly and independently.

Exclude the two cases, both with $q=2$: 
\begin{romenumerate}
\item\label{TABa} 
$A=\sH\sT^{\ell-1}$ and $B=\sT^{\ell-1}\sH$ for some $\ell\ge2$
(see \refE{Egss2}),
\item \label{TABc}
$A=\sH$ and $B=\sT$ 
(see \refE{EH-T}),
\end{romenumerate}
and their variants obtained by interchanging Alice and Bob 
or $\sH$ and $\sT$ (or  both).

Then, with $\theta_{UU}$ given by \eqref{thetaAA} (or \eqref{tau}) and $\gss = \gss(A,B)$ given by \eqref{gss9}, we have $\gss > 0$, 
and 
\begin{align}\label{tab1}
\P(\text{\rm Alice wins}) &
= \frac12+\frac{\theta_{BB}-\theta_{AA}-1}{2\sqrt{2\pi\gss}}n\qqw+O(n\qw),
\\\label{tab2}
\P(\text{\rm Bob wins}) &
= \frac12+\frac{\theta_{AA}-\theta_{BB}-1}{2\sqrt{2\pi\gss}}n\qqw+O(n\qw),
\\\label{tab3}
\P(\text{\rm Tie}) &
= \frac{1}{\sqrt{2\pi\gss}}n\qqw+O(n\qw),
\end{align}
and thus
\begin{align}\label{tab4}
\P(\text{\rm Alice wins}) 
-
\P(\text{\rm Bob wins}) &
= \frac{\theta_{BB}-\theta_{AA}}{\sqrt{2\pi\gss}}n\qqw+O(n\qw).
\end{align}
\end{theorem}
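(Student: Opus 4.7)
The strategy is to represent the point difference $D:=X_A-X_B$ as an additive functional of an ergodic finite Markov chain and apply a local Edgeworth expansion. Writing $Z_1,\dotsc,Z_n$ for the iid uniform letters and $W_i:=(Z_{i-\ell+1},\dotsc,Z_i)$ for $\ell\le i\le n$, the sequence $(W_i)$ is an ergodic Markov chain on $\cA^\ell$ with uniform stationary distribution $q^{-\ell}$. Setting $f(w):=\indic{w=A}-\indic{w=B}$ and $Y_i:=f(W_i)$, we have $D=\sum_{i=\ell}^n Y_i$. Since $A\ne B$ have the same length, $\E Y_i=0$ under stationarity and hence $\E D=0$ exactly, and $D$ is integer-valued. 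Alice wins iff $D>0$, Bob wins iff $D<0$, and a tie occurs iff $D=0$.

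The first main step is to compute the asymptotic cumulants of $D$. A standard Green--Kubo-type computation, combined with the explicit indicator structure of $f$, expresses the variance rate $\gss=\lim_{\ntoo}\Var(D)/n$ in terms of the overlap polynomials $\theta_{AA},\theta_{BB},\theta_{AB},\theta_{BA}$, matching \eqref{gss9}. For the third cumulant we must establish the key identity
\[
\lim_{\ntoo}\frac{\kappa_3(D)}{n}\;=\;3\gss\bigl(\theta_{AA}-\theta_{BB}\bigr).
\]
The proof expands $\kappa_3(D)=\sum_{i,j,k}\operatorname{cum}(Y_i,Y_j,Y_k)$; short-range dependence of the chain reduces the effective sum to nearby triples, and $\indic{A}\indic{B}\equiv 0$ forces each triple cumulant to involve only self-overlaps of $A$ or of $B$. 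After cancellations, the $A$-contributions sum to $+3\gss\theta_{AA}$ and the $B$-contributions to $-3\gss\theta_{BB}$, with mixed $A$--$B$ overlap terms cancelling against each other.

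Second, we apply the local Edgeworth expansion for lattice-valued additive functionals of an ergodic finite Markov chain established earlier in the paper. Outside the two excluded cases, $\gss>0$ and $D$ has lattice span $1$, so the expansion yields
\begin{align*}
\P(D=0) &= \frac{1}{\sqrt{2\pi\gss n}}+O\bigl(n^{-1}\bigr),\\
\P(D<0) &= \frac{1}{2}-\frac{1}{2\sqrt{2\pi\gss n}}+\frac{\kappa_3(D)}{6\gss^{3/2}n^{3/2}\sqrt{2\pi}}+O\bigl(n^{-1}\bigr),
\end{align*}
with $\P(D>0)$ obtained by replacing $\kappa_3(D)$ with $-\kappa_3(D)$. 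The $-\tfrac12/\sqrt{2\pi\gss n}$ term is the continuity correction for a centered integer-valued lattice sum, while the third-cumulant term carries the Edgeworth asymmetry. Substituting $\kappa_3(D)=3\gss(\theta_{AA}-\theta_{BB})n+O(1)$ collapses both corrections into the numerators $\theta_{BB}-\theta_{AA}-1$ and $\theta_{AA}-\theta_{BB}-1$ appearing in \eqref{tab1}--\eqref{tab2}, and \eqref{tab3}--\eqref{tab4} then follow. The two exclusions are precisely where the expansion fails: in case~\ref{TABa}, a telescoping argument gives $\abs{D}\le 1$ deterministically, so $\gss=0$; in case~\ref{TABc}, $X_A+X_B=n$ forces $D=2X_A-n$ to have lattice span~$2$.

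The main obstacle is the third-cumulant identity: it is not at all obvious a priori that the various triple-overlap contributions should collapse into the clean quantity $3\gss(\theta_{AA}-\theta_{BB})$, and this cancellation is the algebraic heart of the theorem and the source of the surprising statement that only the self-overlaps $\theta_{AA}$ and $\theta_{BB}$ (and not cross-overlaps $\theta_{AB},\theta_{BA}$) determine the winner.
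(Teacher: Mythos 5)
Your overall strategy matches the paper's: represent the net score as an additive functional of the word-overlap Markov chain on $\cA^\ell$, compute $\gss$ and $\gk_3$ in terms of the overlap indices $\theta_{UV}$, and read off the winning probabilities from an Edgeworth expansion with lattice continuity correction. The cumulant identities you state, including $\gk_3 = 3\gss(\theta_{AA}-\theta_{BB})$, are exactly the ones the paper derives (with full detail, including the cross-overlap cancellation via \eqref{jw6}--\eqref{gk3}).

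However, there is a genuine gap in your handling of when the Edgeworth machinery actually applies. You assert that ``outside the two excluded cases, $\gss>0$ and $D$ has lattice span $1$, so the expansion yields \dots''. The marginal lattice span of $D$ is not the right hypothesis for a Markov-chain Edgeworth expansion; the correct condition is the joint aperiodicity condition \eqref{em4}, that the pairs $(g(\cQ),\ell(\cQ))$ over closed paths generate $\bbZ^2$. This condition fails not only in your two excluded cases but also in a third case that is \emph{not} excluded from the theorem, namely $A=\sH\sH$, $B=\sT\sT$ (and its variants). There $g(\cQ)\equiv\ell(\cQ)\pmod 2$ for every closed path, so \eqref{em4} fails, the characteristic function is not exponentially small near $t=\pi$, and the general expansion theorem does not apply --- even though $D$ takes all integer values so its marginal span is $1$. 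Your argument as written would therefore try to invoke an expansion that is unavailable for this case. The paper handles it by a separate direct computation (the characteristic function $\frac{1+\cos t}{2}\cos^{n-1}t$ can be analyzed explicitly), showing the conclusions \eqref{tab1}--\eqref{tab4} do still hold, but with a periodic $O(n^{-3/2})$ contribution that would ruin any higher-order expansion. You would need either to verify the precise Markov lattice condition and supply the analogous case-by-case check, or to give a separate argument for $\sH\sH$ vs $\sT\sT$. The characterization of exactly which word pairs violate \eqref{em4} is itself nontrivial (the paper's Theorem~\ref{Tem4}), so this is not just a cosmetic omission.
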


In the two excluded cases, the conclusions \eqref{tab1}--\eqref{tab3} fail
(in somewhat different ways), 
see \refEs{Egss2} and \ref{EH-T}.

\begin{remark}\label{Rfair}
In particular, \refT{TAB} shows that 
the game is fair up to order $n\qw$ 
if and only if $\theta_{AA}=\theta_{BB}$. This is (in different notation)
the condition by \citet{Basdevant},
who showed (by completely different methods)
that in this case the game is perfectly fair for every $n$:
$\P(\text{\rm Alice wins}) =
\P(\text{\rm Bob wins})$.
(The result in \cite{Basdevant} is stated for $q=2$, but the proof holds for
an arbitrary finite alphabet.)
Our result thus shows that if the condition in \cite{Basdevant} is
not satisfied, then the game is for all large $n$ not fair; thus their
condition for fairness is both necessary and sufficient.
\end{remark}

Our method
to prove \refT{TAB} is to 
recognize \eqref{tab1} and \eqref{tab2} as examples of (first order)
Edgeworth expansions.
Edgeworth expansions are useful approximations in many situations in
probability and statistics, and they have been rigorously established in
many situations, see \refSS{SSEdgeworth} for a background.
We use  a general result giving an Edgeworth
expansion for partial sums of an integer-valued function of a finite-state
Markov chain.
There is a large literature on Edgeworth expansions in various situations, 
including Markov chains, see \refSS{SSEdgeworth}.
However,  we have failed to find a general theorem that is
directly applicable here.
We therefore state such a theorem for finite-state Markov chains here
(in two versions, \refTs{T1} and \ref{T2}); it is perhaps not new, 
but since we do not know a reference we give for completeness a complete
proof.

\section{Preliminaries}

\subsection{Notation}


The \emph{characteristic function} of a random variable $X$ is defined by
\begin{align}\label{chf}
\gf(t)=  \gf_X(t):=\E e^{\ii tX},\qquad t\in\bbR.
\end{align}

Vectors are generally regarded as column vectors in formulas using matrix
notation; the row vector that is the transpose of a column vector $v$ 
is denoted $v\tr$. 

We let $\one=(1,\dots,1)\tr$, the (column) vector with all entries 1, with the dimension of the vector determined by context. 

For vectors $v$ (in $\bbC^m$ for some $m\in\bbN$) we let $\norm{v}$ be the
usual Euclidean norm.
For a square matrix $A$, let
$\norm{A}$ denote its \emph{operator norm}
\begin{align}\label{norm}
  \norm {A} :=\sup\bigset{\norm{Av}:\norm{v}=1} 
\end{align}
and let $\rho(A)$ denote its \emph{spectral norm}
\begin{align}\label{rho}
  \rho(A):=\max\set{|\gl|:\gl\text{ is an eigenvalue of $A$}}
.\end{align}
Recall the \emph{spectral radius formula}
\begin{align}\label{spec}
  \rho(A) = \lim_\ntoo \norm{A^n}^{1/n}.
\end{align}

A square matrix $A$ with non-negative entries is 
\emph{irreducible} if for every pair of indices $i,j$ we have $(A^n)_{ij}>0$
for some $n\ge1$; the matrix is \emph{primitive} 
if we further can choose the same $n$ for all pairs $i,j$;
see e.g.\ \cite[Chapter 1]{Seneta}.
Recall that a matrix is primitive if and only if it is irreducible and
\emph{aperiodic} 
\cite[Theorem 1.4]{Seneta}.
A \emph{stochastic} matrix is a square matrix with non-negative entries
where all row sums are 1.
It follows from the Perron--Frobenius theorem \cite[Theorem 1.5]{Seneta}
that an irreducible stochastic matrix has an eigenvalue 1 which is
(algebraically) simple, and that the spectral radius is 1;
corresponding right and left eigenvectors are $\one$ and $\pi\tr$, the
stationary distribution of the corresponding Markov chain.

For $z_0\in\bbC$ and $r>0$, let $D(z_0,r):=\set{z\in\bbC:|z-z_0|<r}$,
the open disc with centre $z_0$ and radius $r$.

We let $\indic{\cE}$ denote the indicator of an event $\cE$;
this is thus 1 if $\cE$ occurs and 0 otherwise.

$C$ denotes unspecified constants that may vary from one occurrence to the
next. 
They may depend on parameters,
and we may write e.g.\ $C(\gd)$ to stress this.


\subsection{Cumulants}

If $X$ is a random variable, with (for simplicity) all moments finite,
then its \emph{cumulants} (also called \emph{semi-invariants})
are defined by, for integers $m\ge1$,
recalling \eqref{chf},
\begin{align}\label{cu2}
\kk_m=  \kk_m(X):=\ii^{-m}\frac{\ddx^m}{\ddx t^m} \log \gf_X(t)\bigr|_{t=0};
\end{align}
in other words, $\log\gf_X(t)$ has the Taylor expansion, for $M\ge1$
and small $t$,
\begin{align}\label{cu3}
  \log\gf_X(t)=\sum_{m=1}^M\kk_m\frac{(\ii t)^m}{m!}+O(|t|^{M+1}).
\end{align}
See for example \cite[Section 4.6]{Gut}.
The cumulant $\kk_m$ can be expressed as a polynomial in moments of order
at most $m$. In particular \cite[Theorem 4.6.4]{Gut},
\begin{align}\label{cu4}
  \kk_1 &= \E X,
\\\label{cu5}
\kk_2&=\E[X^2]-(\E X)^2 = \Var(X),\\
\kk_3&=\E[(X-\E X)^3].\label{cu6}
\end{align}
Thus $\kk_2$ and $\kk_3$ are simply the corresponding central moments. 
(Higher cumulants have more complicated formulas.)

\subsection{Background on Edgeworth expansions}\label{SSEdgeworth}
The idea of an Edgeworth expansion (or Edgeworth approximation)
\cite{Edgeworth}
is that many random variables in probability theory and statistics have
distributions that are approximatively normal, and that the approximation
often can be improved by adding extra terms to the normal distribution.
It turns out that the natural terms to add are derivatives of the normal
density function, with coefficients that are given by some (explicit but a
little complicated) polynomials of cumulants $\kk_m$, $m\ge3$,
divided by powers of the variance.
(Part of the motivation is that for a normal random variable, all
cumulants $\kk_m=0$ for $m\ge3$; thus the cumulants measure in some way
deviations from normality.
For detailed motivations, see \cite{Edgeworth} and
\cite[Chapter 17.6-7]{Cramer}.)
For a continuous random variable $Z$, 
for simplicity normalized to have $\E Z=0$ and $\Var Z=1$,
the one-term Edgeworth appoximation is
\begin{align}\label{ed1}
\P(Z\le x) 
\approx \Phi(x) + \frac{\kk_3(Z)}{6\sqrt{2\pi}}(1-x^2)e^{-x^2/2},
\qquad -\infty<x<\infty,
\end{align}
where 
\begin{align}
  \label{Phi}
\Phi(x):=\int_{-\infty}^x\frac{1}{\sqrt{2\pi}}e^{-x^2/2}\dd x
\end{align}
is the standard normal distribution function.
This approximation can be made rigorous, with error bounds, in 
many situations. The most important, and archetypical, case is when the
random variable $Z$ is 
a normalized sum $\tS_n:=\frac{1}{\gs\sqrt n}\sum_1^n X_i$ of $n$ 
independent and identially distributed (i.i.d.)\
random variables $X_i$ with $\E X_i=0$, 
$\Var X_i=\gss$, and a finite third moment $\E |X_i|^3$.
Then $\kk_3(\tS_n)=n^{-1/2}\kk_3(X_i)/\gs^3$, and, as shown by 
\cite[Theorem IV.2, p.~49]{Esseen}, 
if the distribution of $X_i$ is non-lattice, then
\eqref{ed1} holds with an error $o(n^{-1/2})$,
i.e., 
with $S_n:=\sum_1^n X_i$ the unnormalized sum,
\begin{align}\label{ed2}
\P(S_n\le x\gs\sqrt n)=
\P(\tS_n\le x) = 
\Phi(x) + \frac{\kk_3(X_1)}{6\gs^3\sqrt{2\pi n}}(1-x^2)e^{-x^2/2}
+ o\bigpar{n^{-1/2}},
\end{align}
uniformly in $x$.
(Note that the \rhs{} is $\Phi(x)+O(n\qqw)$ uniformly in $x$, so \eqref{ed2}
implies, and can be seen as a precise version of, the Berry--Esseen theorem 
\cite[Theorem 7.6.1]{Gut}.)
Furthermore, under somewhat stronger conditions, the expansion \eqref{ed2}
can be continued to any number of terms, where the $m$th term is of the
order $n^{-m/2}$; see \cite[Theorem IV.1, p.~48]{Esseen} or
\cite[Theorem VI.4 and (VI.1.13)]{Petrov} for a precise statement and the
general form of the terms.

However, in this paper we are interested in integer-valued variables, and
then \eqref{ed2} is not appropriate, since the \rhs{}
ignores the jumps in the \lhs{} when $x\gs\sqrt n$ is an integer.
The correct version of \eqref{ed2} for integer-valued $X_i$ with $\E X_i=0$
is
\begin{multline}\label{ed3}
\P(S_n\le x\gs\sqrt n)
=  \P(\tS_n\le x) 
= \Phi(x) + \frac{\kk_3(X_1)}{6\gs^3\sqrt{2\pi n}}(1-x^2)e^{-x^2/2}
\\
+\frac{1}{\gs\sqrt{2\pi n}}\gth(x\gs\sqrt n)e^{-x^2/2}
+ o\bigpar{n^{-1/2}},
\end{multline}
where we have added a correction term with
\begin{align}\label{gth}
  \gth(x):=\tfrac12 - (x-\floor{x}),
\end{align}
see \cite[Theorem IV.3, p.~56]{Esseen} (which also includes the case 
$\E X_i\neq0$, for simplicity ignored here).
See also 
\cite[Theorem IV.4, p.~61]{Esseen} and \cite[Theorem VI.6]{Petrov}
for the corresponding result with further terms (similar, but more complicated),
and \cite{KolassaMc} for an interpretation of this expansion as a Sheppard's
correction of the version for the continuous 
(or, more generally, non-lattice) case.

Note that $\gth(x)=0$ when $x$ is  the midpoint between two consecutive
integers. In fact, 
\eqref{ed3} can be regarded as an approximation as in \eqref{ed2} 
of the distribution function of $S_n$ interpolated linearly between
such half-integer points, see \cite[Theorem XVI.4.2]{FellerII}.

In the present paper we are interested in the distribution function at or
close to the mean, i.e., the case $x\approx0$ above. Note that if, say,
$x\gs\sqrt n=O(1)$, so $x=O(n^{-1/2})$,
then \eqref{ed3} simplifies to 
\begin{align}\label{ed4}
&\P(S_n\le x\gs\sqrt n)
= \frac12 +\frac{1}{\sqrt{2\pi}}x + \frac{\kk_3(X_1)}{6\gs^3\sqrt{2\pi n}}
+\frac{1}{\gs\sqrt{2\pi n}}\gth(x\gs\sqrt n)
+ O\bigpar{n^{-1}}.
\end{align}

We remark that under suitable conditions there are also corresponding
Edgeworth expansions of the density function (in the absolutely
continuous case) or point probabilities (in the discrete case), which refine
the local limit theorem in a similar way, see for example
\cite[Theorem IV.5, p.~63]{Esseen} or
\cite[Theorem VII.13]{Petrov}; 
we will not use these expansions here.

In the present paper, we cannot use the basic results above, since the random
variable we are interested in is \emph{not} a sum of i.i.d.\ variables.
Nevertheless, the Edgeworth expansions above have been extended to various
other situations, and
there is a large literature on Edgeworth expansions under various situations
with dependency;
we mention only a few references that are closely related to our case,
although none of them is directly applicable to it,
see further the discussion before \refT{T1}.
We consider below an irreducible finite-state Markov chain, and random
variables defined by \eqref{jp1}. For this case, 
\citet{Sirazhdinov} proved (under some conditions) 
an Edgeworth expansion for point probabilities
(and a Berry--Esseen type estimate for the distribution function).
\citet{Nagaev1961} considered more general Markov chains, allowing an infinite
number of states, and proved Edgeworth expansions for both continuous and
discrete cases under some conditions.
(For the continuous case, see also \cite{Datta} 
with some corrections and improvements.) 
For the integer-valued case that we are interested in,
\citet[Theorem (3.1)]{Hipp}  
proved an Edgeworth expansion for point probabilities 
under more general conditions.
The variables in our application to Litt's game are $m$-dependent, so an
alternative approach would be to use results on Edgeworth expansions for sums of
$m$-dependent variables. However, we have not found a suitable such theorem
for integer-valued variables; for the non-lattice case, see e.g.\
\citet{Heinrich1984}, \citet{Rhee}, and \cite{Loh}; 
see also \citet{RinottRotar} for a more
general result, and the further references there.

\subsection{Group Inverse}
If $A$ is a square (possibly singular) 
matrix, then the \emph{group inverse} 
$A\gx$ is defined to be the matrix satisfying: 
\begin{align}
AA\gx&=A\gx A,\label{gi1}\\
AA\gx A&=A,\label{gi2}\\
A\gx AA\gx&=A\gx,\label{gi3}
\end{align}
The matrix $A\gx$ is unique if it exists, however it may not exist in the
first place. 

If we regard $A$ as a linear operator in some or $\bbR^q$ (or $\bbC^q$),
with kernel $\ker(A)$ and range $\ran(A)$, 
then $A\gx$ exists if and only if  $\ker(A)\cap\ran(A)=\set0$ and thus
$\bbR^q$ (resp.\ $\bbC^q$) is the direct sum $\ker(A)\oplus\ran(A)$; in this case
$A\gx|_{\ker(A)}=0$ and $A\gx|_{\ran(A)}$ is the inverse of $A:\ran(A)\to\ran(A)$. 

It can be shown that $A\gx$ does exist whenever $A=I-P$ where $P$ is a
stochastic  matrix, see \cite{meyer}. 
In this case, there is actually more that we can say. One useful identity
that holds if $P$ is irreducible is 
\begin{equation}\label{gi4}
(I-P)\gx(I-P)=I-\one\pi\tr
\end{equation}
where $\pi$ is the stationary distribution,
see Theorem 2.2 in \cite{meyer}.

Most of the time when we want to actually compute $(I-P)\gx$, we will use the following well-known representation. 
\begin{proposition}
\label{groupinvformulaprop}
If\/ $P$ is an irreducible stochastic matrix, then 
\begin{eqnarray}
(I-P)\gx&=&(I-P+\one\pi\tr)^{-1}-\one\pi\tr\notag\\ 
&=&\lim_{t\upto 1}\bigpar{(I-tP)^{-1}-\one\pi\tr/(1-t)}
\label{gi6}
.\end{eqnarray}
\end{proposition}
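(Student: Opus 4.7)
My plan is to exploit the Perron--Frobenius decomposition $\bbC^q = \text{span}(\one) \oplus \ker(\pi\tr)$ afforded by $P$: since the Perron eigenvalue $1$ is algebraically simple with right eigenvector $\one$ and left eigenvector $\pi\tr$, the matrix $\Pi := \one\pi\tr$ is the oblique projector onto $\text{span}(\one)$ along $\ker(\pi\tr)$. Throughout I will use the identities $\Pi^2=\Pi$ (from $\pi\tr\one=1$), $P\Pi=\Pi P=\Pi$, and $(I-P)\Pi=\Pi(I-P)=0$; both summands in the decomposition are $P$-invariant, and $I-P$ vanishes on $\text{span}(\one)$ while restricting to an invertible operator on $\ker(\pi\tr)$.

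For the first equality I would first verify that $M := I - P + \Pi$ is invertible by restricting to the two summands: $M\one = \one$, while on $\ker(\pi\tr)$ the $\Pi$ term vanishes and $M$ acts as the invertible $I-P$. The identities $M\one=\one$ and $\pi\tr M=\pi\tr$ then give $M^{-1}\one=\one$ and $\pi\tr M^{-1}=\pi\tr$, hence $M^{-1}\Pi=\Pi M^{-1}=\Pi$. Setting $G := M^{-1} - \Pi$ and writing $I-P = M - \Pi$, a short calculation yields $(I-P)G = G(I-P) = I-\Pi$, and the three group-inverse axioms \eqref{gi1}--\eqref{gi3} fall out routinely: (gi1) is immediate, (gi2) follows from $(I-\Pi)(I-P)=I-P$ (using $\Pi P=\Pi$), and (gi3) from $G(I-P)G=(I-\Pi)G=G$ since $\Pi G = \Pi M^{-1}-\Pi^2 = 0$. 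Uniqueness of the group inverse then identifies $G$ with $(I-P)\gx$.

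For the limit formula my approach is to split $(I-tP)^{-1}$ along the same direct sum; note $(I-tP)^{-1}$ exists for $t\in[0,1)$ because $1/t$ exceeds every eigenvalue modulus of $P$. From $P\Pi=\Pi$ I get $(I-tP)\Pi=(1-t)\Pi$, so
\begin{equation*}
(I-tP)^{-1}-\frac{\Pi}{1-t} \;=\; (I-tP)^{-1}(I-\Pi).
\end{equation*}
The factor $(I-\Pi)$ annihilates $\text{span}(\one)$ and is the identity on the $P$-invariant subspace $\ker(\pi\tr)$, on which the spectrum of $P$ misses $1$ (since $1$ is simple). Hence $(I-tP)|_{\ker(\pi\tr)}$ is invertible throughout a neighbourhood of $t=1$ and its inverse is continuous there, so the right-hand side converges to the operator that inverts $I-P$ on $\ker(\pi\tr)$ and kills $\text{span}(\one)$; by the preceding paragraph this is exactly $G$.

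The main delicacy is this last continuity argument when $P$ is periodic: $P|_{\ker(\pi\tr)}$ may then have eigenvalues on the unit circle, so one cannot invoke a Neumann series. What rescues the argument is precisely the algebraic simplicity of the Perron eigenvalue $1$, which ensures that $1$ is cleanly excised from the spectrum upon restricting to $\ker(\pi\tr)$, leaving $I-P$ invertible there and the inverse an analytic function of $t$ near $t=1$.
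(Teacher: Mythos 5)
Your proof is correct, and it is genuinely different from the paper's on both halves. For the first equality, the paper simply cites Meyer; you give a self-contained proof by exhibiting $G:=M^{-1}-\one\pi\tr$ with $M:=I-P+\one\pi\tr$, verifying invertibility of $M$ via the $P$-invariant splitting $\bbC^q=\operatorname{span}(\one)\oplus\ker(\pi\tr)$, and checking the three group-inverse axioms directly from $M\one\pi\tr=\one\pi\tr M=\one\pi\tr$ and $\one\pi\tr G=0$. For the second equality, the paper writes $(I-P+\one\pi\tr)^{-1}=\lim_{t\upto1}(I-t(P-\one\pi\tr))^{-1}$, expands the inner inverse as a Neumann series (valid since $\rho(t(P-\one\pi\tr))<1$ for $t<1$), and resums via the telescoping identity $(P-\one\pi\tr)^n=P^n-\one\pi\tr$, $n\ge1$; you instead use the slicker one-line identity $(I-tP)^{-1}-\one\pi\tr/(1-t)=(I-tP)^{-1}(I-\one\pi\tr)$ (from $(I-tP)\one\pi\tr=(1-t)\one\pi\tr$) and then take the limit by continuity of inversion on the $P$-invariant complement $\ker(\pi\tr)$, where $1$ is excluded from the spectrum by algebraic simplicity of the Perron eigenvalue. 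Your route is more structural and avoids both the external citation and the series computation; the paper's route is more elementary and exhibits the geometric-series identity $(I-P)\gx=\sum_{n\ge0}(P^n-\one\pi\tr)+I-\one\pi\tr$ (Abel-summed) explicitly, which is what gets used later in Proposition~\ref{P4}. One small remark on your closing caveat: the paper's Neumann-series step is also immune to the periodicity worry, since the series is taken for fixed $t<1$ where $\rho(t(P-\one\pi\tr))=t\rho(P-\one\pi\tr)\le t<1$, and the final limit $t\upto1$ is taken only after resummation to the closed form $(I-t(P-\one\pi\tr))^{-1}$, whose convergence rests on $1\notin\operatorname{spec}(P-\one\pi\tr)$ -- exactly the same spectral fact you invoke.
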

\begin{proof}
The first equality is proved in \cite{meyer}. For the second equality, write 
\begin{eqnarray}\label{gi7}
(I-P+\one\pi\tr)^{-1}&= & \lim_{t\upto 1}(I-t(P-\one\pi\tr))^{-1}
\end{eqnarray}
Since for any $t<1$, the spectral radius of $t(P-\one\pi\tr)$ is $<1$, the
inverse on the right side can be expanded as a geometric series. Moreover,
it is easily seen that $(P-\one\pi\tr)^n=P^n-\one\pi\tr$ for $n\geq 1$,
which implies the second equality after a simple calculation.
\end{proof}

\begin{remark}\label{Rpseudoinverse}
  Note that the group inverse does not in general coincide with the more
  well-known Moore--Penrose pseudoinverse $A^+$. In fact, it can be shown
  that if $P$ is an irreducible stochastic matrix, then
$(I-P)\gx=(I-P)^+$ if and only if the stationary distribution of $P$
  is uniform \cite[Theorem 6.1]{meyer}.
\end{remark}

\section{An Edgeworth expansion for finite-state Markov chains}
\label{SMarkov}
\subsection{Markov chain preliminaries}
Let $(W_k)\xoo$ be a homogeneous Markov chain on a finite state space $\cW$,
with transition probabilities given by the matrix
$P=(P_{ij})_{i,j\in\cW}$.
(All matrices and vectors in this section will be indexed by $\cW$.
The reader that so prefers may without loss of generality assume that
$\cW=[m]$ for some integer $m$ in this section.)
For basic facts on Markov chains used below, see \eg{}
\cite[Chapter 1]{Norris}.

We denote the distribution of $W_k$ by 
\begin{align}\label{jp2}
  \pi_{k}=(\pi_{k;i})_{i\in\cW}
\quad\text{where}\quad
\pi_{k;i}:=\P(W_k=i),
\quad i\in\cW.
\end{align}
In particular, $\pi_1$ is the initial distribution of the Markov chain.
We regard $\pi_k$ as a column vector;  recall that
its transpose (a row vector) is denoted by $\pi_k\tr$.
It is well-known that
\begin{align}\label{jp3}
  \pi_k\tr = \pi_1\tr P^{k-1},
\qquad k\ge1.
\end{align}

We say that a sequence $i_0,\dots, i_\ell$ of elements of $\cW$ is a
\emph{path} if it can appear with positive probability in the Markov chain,
i.e., if $P_{i_{k}i_{k+1}}>0$ for $0\le k<\ell$. We say that $\ell$ is the
\emph{length} of the path; we  denote the length of a path $\cQ$ by $\ell(\cQ)$.
A \emph{closed path} is a path
$i_0,\dots, i_\ell$ such that $i_\ell=i_0$.
We assume throughout this section
that the Markov chain is \emph{irreducible}, i.e., that
\begin{align}\label{em0}
\text{for every pair $i,j\in\cW$, there exists a path $i=i_0,\dots,i_\ell=j$
  from $i$ to $j$}.  
\end{align}
We assume also that the Markov chain is \emph{aperiodic}, i.e.,
\begin{align}\label{em1}
  \gcd\bigset{\ell(\cQ):\cQ \text{ is a closed path}}=1.
\end{align}
It is well-known
that our assumptions \eqref{em0}--\eqref{em1} that the Markov chain is
irreducible and aperiodic imply that
there is a unique stationary
distribution $\pi$, i.e., a distribution
satisfying
\begin{align}\label{jp4}
  \pi\tr=\pi\tr P.
\end{align}
Moreover, for any initial distribution $\pi_1$,
we have
\begin{align}\label{jp5}
  \pi_n\to\pi\qquad\text{as }\ntoo.
\end{align}

\subsection{The main result}

Let $g:\cW\to\bbZ$ be an integer-valued function, and define the 
integer-valued random variables
$X_k:=g(W_k)$ and
\begin{align}\label{jp1}
  S_n:=\sumkn X_k = \sumkn g(W_k).
\end{align}
Our goal is to prove an Edgeworth expansion for $S_n$.

We will, besides the aperiodicity condition \eqref{em1}, also assume 
a similar aperiodicity condition for the function $g$.
For a path $\cQ$ given by $i_0,\dots,i_\ell$, define the 
\emph{value} of $\cQ$ as
\begin{align}\label{em3}
  g(\cQ):=\sum_{k=1}^\ell g(i_k) \in \bbZ.
\end{align}
We then assume:
\begin{align}\label{em4}
\text{The set $\bigset{(g(\cQ),\ell(\cQ)):\cQ \text{ is a closed path}}$
generates $\bbZ^2$ as a group}.
\end{align}

\begin{remark}\label{Rem4}
  It is easy to see, arguing similarly as in the proof of \refProp{PO}
  below,
that in \eqref{em4}, it is equivalent to consider only closed paths starting
at any given state $i_0$. This implies the following probabilistic
formulation of the condition:
Start the Markov chain in state $i_0$ at time 0, and let
$T:=\min\set{k\ge1:W_k=i_0}$ be the time of the first return there.
Then \eqref{em4} is equivalent to:
\begin{align}\label{em4x}
\text{
  The random vector $(S_T,T)$ is not supported on a proper sublattice of 
$\bbZ^2$}.
\end{align}
This formulation is used e.g.\ by \cite{Hipp}.
\end{remark}

The following results will be proved in \refS{SpfT1} .
As noted in \refSS{SSEdgeworth}, 
there is a large literature on Edgeworth expansions, and in particular
\citet{Nagaev1961} has proven several similar results for
more general Markov chains (allowing also infinite state spaces); 
however, his theorems for the integer-valued (or, equivalently, lattice-valued)
case
assume instead of \eqref{em4} 
a strong aperiodicity condition 
(\cite[Condition C]{Nagaev1961}, see \refE{ENagaev})
which is not satisfied in our
application.
\citet[Theorem (3.1)]{Hipp} has, by another method, 
proven a general result on approximation of the point probabilities
using weaker assumptions than \cite{Nagaev1961} and in particular
under the same aperiodicity condition \eqref{em4} as we.
However, his result is not in a form directly applicable to our probabilities;
it seems possible to derive our result from his with some extra work
but we have not attempted this; we give instead a direct proof,
which also leads to the explicit formulas  \eqref{t1a}--\eqref{t1f} 
for the parameters (cumulants)
in the approximation \eqref{t1} (which are not explicit in \cite{Hipp}).
Our proof  is similar to the proofs in
\cite{Nagaev1961}, but much simpler since we consider only finite state spaces;
our argument is thus structurally closer  to the classical proof of the i.i.d.\
result using characteristic functions than the arguments by \cite{Hipp}.

\begin{theorem}\label{T1}
Let $(W_k)\xoo$ be a stationary, irreducible and aperiodic 
homogeneous Markov chain on a finite state space  $\cW$, and
let $S_n$ be defined by \eqref{jp1} for some function
$g:\cW\to\bbZ$ such that \eqref{em4}  holds. 
Let 
\begin{align}\label{t1kk1}
  \mu&:= \E X_1=\E g(W_1), 
\\\label{t1kk2}
\gss &:=\lim_\ntoo n\qw\Var(S_n),
\\\label{t1kk3}
\gk_3&:=\lim_\ntoo n\qw\kk_3(S_n)
.\end{align}
(The limits exist and are finite  under our assumptions.)
Then $\gss>0$,
$\E S_n=n\mu$, and with $\Phi(x)$ and $\gth(x)$ as in 
\eqref{Phi} and \eqref{gth}, 
\begin{multline}\label{t1}
\P(S_n -n\mu\le x\gs\sqrt n)
= \Phi(x) + \frac{\gk_3}{6\gs^3\sqrt{2\pi n}}(1-x^2)e^{-x^2/2}
\\
+\frac{1}{\gs\sqrt{2\pi n}}\gth(x\gs\sqrt n + n\mu)e^{-x^2/2}
+ O\bigpar{n^{-1}},
\end{multline}
uniformly in $x\in\bbR$ and $n\ge1$.
Moreover, $\mu$, $\gss$, and $\gk_3$ have the following explicit expressions: 
\begin{align}
\mu&=\pi\tr G\one \label{t1a}\\
\gss& =  \pi\tr\left(G^2+2GQPG\right)\one-\mu^2\label{t1b}\\
\gk_3&=\pi\tr\left(G^3+3GQPG^2+ 3G^2QPG+6GQPGQPG\right)\one\notag\\
&\qquad-\mu\left(6\pi\tr GQ^2PG\one+ 3\sigma^2+\mu^2\right),\label{t1c}
\end{align}
where $\pi$ is the stationary distribution of $P$ and $G=\diag(g(1),\dots,
g(m))$ and $Q:=(I-P)\gx$.
Equivalently, with $Q':=Q-I = (I-P)\gx-I$,
we have
\begin{align}
\sigma^2&=\pi\tr(G^2+2GQ'G)\one+\mu^2\label{t1e}\\
\gk_3& =  \pi\tr(G^3+3GQ'G^2+3G^2Q'G+6GQ'GQ'G)\one\notag\\
&\qquad+\mu\left(3\pi\tr G^2\one - 6 \pi\tr GQ'^2G\one +2\mu^2\right)
\label{t1f}
.\end{align}

\end{theorem}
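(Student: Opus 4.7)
The approach is the Nagaev characteristic-function method specialised to the finite state-space setting, where all the spectral analysis becomes elementary. For $t\in\bbR$ define the perturbed transition matrix $P(t)$ with entries $P(t)_{ij}:=P_{ij}e^{\ii t g(j)}$ and $D(t):=\diag\bigl(e^{\ii t g(j)}\bigr)_{j\in\cW}$. Stationarity reduces the characteristic function of $S_n$ to the matrix bilinear form
\begin{align*}
\phi_n(t):=\E e^{\ii tS_n}=(D(t)\pi)\tr P(t)^{n-1}\one.
\end{align*}
At $t=0$, $P$ is irreducible and aperiodic, so $1$ is a simple eigenvalue with right/left eigenvectors $\one$, $\pi\tr$, and the rest of the spectrum lies in a disc of radius $r<1$. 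Analytic perturbation theory then yields, on some $|t|\le\delta_0$, a simple analytic eigenvalue $\lambda(t)$ with $\lambda(0)=1$, analytic eigenvectors $v(t)$, $u(t)\tr$ (normalised by $u(t)\tr v(t)=1$), and a uniform spectral gap, so that
\begin{align*}
\phi_n(t)=\lambda(t)^{n-1}\psi(t)+O\bigl((1-\eta)^n\bigr),
\qquad \psi(t):=(u(t)\tr\one)\cdot(D(t)\pi)\tr v(t),
\end{align*}
with $\psi$ analytic, $\psi(0)=1$, and (forced by $\E S_n=n\mu$) $\psi'(0)=\ii\mu$. The cumulants $\mu,\gss,\kk_3$ are extracted from the first three derivatives of $\log\lambda$ at $0$: repeatedly differentiating $P(t)v(t)=\lambda(t)v(t)$, left-multiplying by $\pi\tr$, and using the group inverse $Q=(I-P)\gx$ (available by \refProp{groupinvformulaprop}) to invert $I-P$ on its range produces the explicit formulas \eqref{t1a}--\eqref{t1f}.

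The crucial global input is that the lattice aperiodicity \eqref{em4} forces $\rho(P(t))<1$ for every $t\in(-\pi,\pi]\setminus\{0\}$. Indeed, if $P(t)$ had a modulus-$1$ eigenvalue $e^{\ii\alpha}$, then $|P(t)_{ij}|=P_{ij}$ together with a Perron--Frobenius-type argument on the irreducible matrix $P$ would produce $h:\cW\to\bbR$ with $tg(j)+h(j)-h(i)-\alpha\in 2\pi\bbZ$ on every edge of the transition graph. Summing around any closed path $\cQ$ collapses the boundary $h$-terms and leaves $tg(\cQ)-\alpha\ell(\cQ)\in 2\pi\bbZ$; \eqref{em4} then forces $(t,\alpha)\in 2\pi\bbZ^2$, contradicting $t\in(-\pi,\pi]\setminus\{0\}$. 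Compactness upgrades this to $\sup_{|t|\ge\delta}\rho(P(t))\le 1-\eta'$ for each $\delta>0$, so $\phi_n$ decays geometrically outside any neighborhood of $0$. The same argument also gives $\gss>0$: $\gss=0$ would force $g-\mu$ to be a coboundary along every edge, hence $g(\cQ)=\mu\ell(\cQ)$ for every closed path, placing all pairs $(g(\cQ),\ell(\cQ))$ in a proper subgroup of $\bbZ^2$ and again contradicting \eqref{em4}.

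The final step is Fourier inversion on the torus. The cleanest route to the sawtooth term is to smooth: let $U$ be uniform on $(-\tfrac12,\tfrac12)$ independent of $(W_k)$, so that $S_n+U$ is continuous with piecewise-linear distribution function $F_n^*$ interpolating $F_n$ at the half-integer midpoints $(k+\tfrac12,F_n(k))$, and with characteristic function $\phi_n(t)\sin(t/2)/(t/2)$. Standard (non-lattice) Edgeworth theory applies to $F_n^*$: on $|t|\le\delta$ one writes
\begin{align*}
\lambda(t)^{n-1}\psi(t)=\exp\Bigl(\ii n\mu t-\tfrac12 n\gss t^2-\tfrac{\ii}{6}n\kk_3 t^3+O(n|t|^4)+O(t^2)\Bigr)
\end{align*}
and integrates to produce the Gaussian together with the $\kk_3$-correction polynomial $(1-x^2)e^{-x^2/2}$, while the tail $|t|\ge\delta$ contributes $O((1-\eta')^n)$ by the previous paragraph. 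The residual difference $F_n-F_n^*$ is then read off from the local limit theorem (which emerges from the same expansion applied directly to $\phi_n$ via $\P(S_n=k)=(2\pi)\qw\int_{-\pi}^{\pi}e^{-\ii tk}\phi_n(t)\dd t$): a short calculation gives $F_n(y)-F_n^*(y)=\gs\qw(2\pi n)\qqw\gth(y)\exp\bigl(-(y-n\mu)^2/(2n\gss)\bigr)+O(n\qw)$ uniformly in $y$, which is precisely the sawtooth correction in \eqref{t1}.

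The main obstacle I expect is the final bookkeeping step: one must carry the expansion of $\lambda(t)^{n-1}\psi(t)$ on $|t|\le\delta$ to sufficiently high order that the remainder in \eqref{t1} is genuinely $O(n\qw)$ uniformly in $x\in\bbR$ (rather than merely $o(n\qqw)$), and verify that the coefficients in front of $(1-x^2)e^{-x^2/2}$ and of the sawtooth come out with exactly the normalizations stated, with the non-trivial factor $\psi$ contributing only at the $O(n\qw)$ level once the centering by $n\mu$ has been absorbed into the exponent. Conceptually, however, the most delicate ingredient is the aperiodicity argument in the middle paragraph, since that is precisely where the weaker hypothesis \eqref{em4} (rather than the strong aperiodicity used in \cite{Nagaev1961}) is forced to do work.
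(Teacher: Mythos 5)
Your proposal follows essentially the same route as the paper: represent the characteristic function as the bilinear form $\pi_1(z)\tr P(z)^{n-1}\one$, isolate the simple Perron eigenvalue $\lambda$ and a uniform spectral gap by perturbation theory, extract $\mu,\gss,\gk_3$ from the first three derivatives of $\log\lambda$ using the group inverse $Q=(I-P)\gx$, show via the Perron--Frobenius equality case that \eqref{em4} forces $\rho(P(e^{\ii t}))<1$ on $(0,\pi]$, and then run an Esseen-style Fourier inversion with the half-integer interpolation (equivalently, smoothing by a uniform $U$) to produce both the $\gk_3$ term and the sawtooth $\gth$ term. The one place you depart from the paper is the non-degeneracy $\gss>0$: you assert, via ``the same argument,'' that $\gss=0$ forces $g-\mu$ to be an edge coboundary, whence $g(\cQ)=\mu\ell(\cQ)$ on closed paths, contradicting \eqref{em4}. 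That implication is correct but is not literally the same argument as the Perron--Frobenius eigenvalue analysis; it is a separate fact, most cleanly proved by the Poisson-equation/martingale decomposition $S_n-n\mu=\hat g(W_1)-(P\hat g)(W_n)+M_n$, where $\gss$ equals the variance of a single martingale increment. The paper instead proves the equivalence $\gss=0 \iff g(\cQ)=b\ell(\cQ)$ by a renewal-theoretic argument (\refProp{PO} and \refL{LR}), which has the benefit of yielding moment convergence and avoiding explicit construction of $\hat g$, at the cost of more machinery. Either route closes the gap; just be aware that the coboundary implication deserves its own proof rather than being tacked onto the eigenvalue argument.
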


The conditions \eqref{em4} and $\gss>0$ are discussed further in
\refSS{SSem4} and \ref{SSgss0}.

In particular, for the case $\mu=0$ and $x=0$ we obtain the following.

\begin{corollary}\label{C1}
Under the assumptions of \refT{T1}, if furthermore $\mu=0$, then
\begin{align}\label{c1+}
\P(S_n \le 0)
= \frac12 + \frac{\gk_3}{6\gs^3\sqrt{2\pi n}}
+\frac{1}{2\gs\sqrt{2\pi n}}
+ O\bigpar{n^{-1}}
\end{align}
and
\begin{align}\label{c1-}
\P(S_n < 0)
= \frac12 + \frac{\gk_3}{6\gs^3\sqrt{2\pi n}}
-\frac{1}{2\gs\sqrt{2\pi n}}
+ O\bigpar{n^{-1}}.
\end{align}
Consequently,
\begin{align}\label{c1+-}
\P(S_n < 0) - \P(S_n>0)
=  \frac{\gk_3}{3\gs^3\sqrt{2\pi n}}
+ O\bigpar{n^{-1}}
\end{align}
and
\begin{align}\label{c10}
\P(S_n = 0)
= \frac{1}{\gs\sqrt{2\pi n}}
+ O\bigpar{n^{-1}}.
\end{align}
\end{corollary}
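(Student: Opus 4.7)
The plan is to deduce \eqref{c1+} and \eqref{c1-} by two direct substitutions into the Edgeworth expansion \eqref{t1} of \refT{T1}, and then to combine them linearly to obtain \eqref{c1+-} and \eqref{c10}.

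First, for \eqref{c1+}, I substitute $x=0$ and $\mu=0$ into \eqref{t1}. Since $\Phi(0)=\tfrac12$, $(1-0^2)e^{0}=1$, and $\gth(0)=\tfrac12$ by \eqref{gth}, the formula reads off immediately after collecting terms.

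Next, for \eqref{c1-}, I exploit that $S_n$ is integer-valued to write $\P(S_n<0)=\P(S_n\le -1)$, and apply \eqref{t1} with $\mu=0$ and $x=-1/(\gs\sqrt n)$, so that $x\gs\sqrt n=-1$. The Taylor expansion
\[
\Phi\bigpar{-1/(\gs\sqrt n)}=\tfrac12-\frac{1}{\gs\sqrt{2\pi n}}+O(n^{-1}),
\]
combined with $(1-x^2)e^{-x^2/2}=1+O(n^{-1})$, $e^{-x^2/2}=1+O(n^{-1})$, and the fact that $\gth(-1)=\gth(0)=\tfrac12$ (since $\gth$ has period $1$), yields \eqref{c1-} after simplification.

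Finally, \eqref{c1+-} follows by writing $\P(S_n>0)=1-\P(S_n\le 0)$, so that $\P(S_n<0)-\P(S_n>0)=\P(S_n<0)+\P(S_n\le 0)-1$; adding \eqref{c1+} and \eqref{c1-} makes the $\tfrac12$-terms and the $\pm\frac{1}{2\gs\sqrt{2\pi n}}$-corrections cancel, leaving exactly the $\gk_3$-term. Likewise, \eqref{c10} follows from $\P(S_n=0)=\P(S_n\le 0)-\P(S_n<0)$, in which the $\gk_3$-contributions cancel while the two half-corrections add to $\frac{1}{\gs\sqrt{2\pi n}}$. Since all the analytic content is already in \refT{T1}, no serious obstacle is anticipated; the only point requiring a moment of care is that the $O(n^{-1})$ remainder in \eqref{t1} is uniform in $x$, which is precisely what allows the substitution $x=-1/(\gs\sqrt n)$ in the derivation of \eqref{c1-}.
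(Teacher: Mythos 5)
Your proof is correct and follows essentially the same route as the paper: both parts are read off from the Edgeworth expansion \eqref{t1} at $x=0$ (or just below), and the remaining two identities follow by the same linear combinations. The only difference is in \eqref{c1-}: the paper takes $x<0$ and lets $x\upto 0$, so the $-\tfrac12$ comes entirely from $\gth(0-)=-\tfrac12$, whereas you substitute $x=-1/(\gs\sqrt n)$ exactly, so the $\gth$ term contributes $+\tfrac12$ via $\gth(-1)=\tfrac12$, and the extra $-1/(\gs\sqrt{2\pi n})$ comes from Taylor-expanding $\Phi$. Both arguments are valid (the uniformity of \eqref{t1} in $x$ is exactly what licenses each), and they produce the same coefficient.
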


The local limit theorem \eqref{c10} is here shown as a consequence of
\eqref{c1+}--\eqref{c1-}, and thus of \eqref{t1}.
It can also easily be proved directly from the estimates in \eqref{l4} and
\eqref{l7} below using Fourier inversion;  
furthermore, further terms can be obtained as in \cite[Theorem IV.5]{Esseen},
see \cite[Theorem 2]{Sirazhdinov}.

We can generalize the set-up above and consider a function
$g:\cW\times\cW\to\bbZ$ of two variables; we then define
\begin{align}\label{jb1}
  S_n:=\sumkn g(W_{k-1},W_k),
\end{align}
where we for convenience index the Markov chain as $(W_k)_0^\infty$.
We then similarly define, if $\cQ$ is the path $i_0,\dots,i_\ell$,
\begin{align}\label{em33}
  g(\cQ):=\sum_{k=1}^\ell g(i_{k-1},i_k) \in \bbZ.
\end{align}

\begin{theorem}
  \label{T2}
Let $(W_k)\xoo$ be a stationary, irreducible and aperiodic 
homogeneous Markov chain on a finite state space  $\cW$, and
let $S_n$ be defined by \eqref{jb1} for some function
$g:\cW^2\to\bbZ$ such that \eqref{em4}  holds.
Then the conclusions of \refT{T1} and \refC{C1} hold. The explicit expressions for the moments are modified accordingly, see Section \ref{stateExpPiQ} for details.
\end{theorem}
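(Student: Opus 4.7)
The plan is to reduce \refT{T2} to \refT{T1} by passing to the derived Markov chain whose state at time $k$ records the consecutive pair $(W_{k-1},W_k)$; this lifts the two-argument summand $g(W_{k-1},W_k)$ to a one-argument function of a single Markov state. Concretely, set $\widetilde{\cW} := \{(i,j)\in\cW\times\cW : P_{ij} > 0\}$ and $\widetilde{W}_k := (W_{k-1},W_k)$, which is a Markov chain on $\widetilde{\cW}$ with transition matrix $\widetilde{P}_{(i,j),(j,j')} = P_{jj'}$ (all other entries zero). Define $\widetilde{g}:\widetilde{\cW}\to\bbZ$ by $\widetilde{g}(i,j):= g(i,j)$, so that $S_n = \sum_{k=1}^n \widetilde{g}(\widetilde{W}_k)$, which places us in the setting of \eqref{jp1} for the lifted chain.

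Next I would verify that the hypotheses of \refT{T1} transfer. Stationarity of $\widetilde{W}$ is immediate with $\widetilde{\pi}_{(i,j)} = \pi_i P_{ij}$. Irreducibility of $\widetilde{W}$ follows since for any $(i,j),(i',j')\in\widetilde{\cW}$ one can connect $j$ to $i'$ in $W$ and then take the admissible step $i'\to j'$. For aperiodicity, closed paths of length $n$ in $\widetilde{W}$ at $(i,j)$ correspond bijectively to closed paths of length $n$ in $W$ at $j$ whose final transition is $i\to j$; combined with irreducibility of $W$ and $P_{ij}>0$, the gcd of such lengths is $1$. The joint condition \eqref{em4} for $(\widetilde{W},\widetilde{g})$ likewise reduces to \eqref{em4} for $(W,g)$: the bijection above preserves both length and value $g(\cQ)$ in the sense of \eqref{em3} and \eqref{em33}, and the subgroup of $\bbZ^2$ generated by the restricted family equals that generated by all $W$-cycles at $j$, since one can always concatenate a fixed auxiliary cycle ending with the edge $i\to j$ to normalize an arbitrary $W$-cycle and then subtract the corresponding fixed element.

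With these in place, \refT{T1} and \refC{C1} apply directly to $(\widetilde{W},\widetilde{g})$ and deliver the Edgeworth expansion \eqref{t1} for $S_n$. The constants $\mu,\gss,\gk_3$ defined by \eqref{t1kk1}--\eqref{t1kk3} depend only on the joint law of $(W_k)$ and the summand sequence, so they are the same whether computed via $W$ or via $\widetilde{W}$; the closed-form expressions in terms of $P$, $g$, and $(I-P)\gx$ are then obtained by unfolding \eqref{t1a}--\eqref{t1f} with $\widetilde{P},\widetilde{\pi}$, and the diagonal matrix of $\widetilde{g}$, and translating back to the original matrices, which is a direct if somewhat lengthy computation deferred to Section~\ref{stateExpPiQ}. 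The only mildly nontrivial step in the proof proper is the transfer of the two aperiodicity conditions; after that, the conclusion is automatic.
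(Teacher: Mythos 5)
Your reduction to the lifted pair chain $\widetilde{W}_k=(W_{k-1},W_k)$ on the state space $\{(i,j):P_{ij}>0\}$, followed by a transfer of stationarity, irreducibility, aperiodicity, and \eqref{em4} to the lifted chain and an application of \refT{T1}, is exactly the argument the paper gives, with the paper leaving the transfer-of-hypotheses verifications implicit. Your proposal is correct and simply supplies more detail on the path correspondence than the paper's concise proof.
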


For further extensions, see \refS{Sext}.

\section{Coin tossing}\label{SAB}

We consider in this section Litt's game, in the general version described in
the introduction which is defined as follows.

Let $\cA$ be a finite alphabet, with $q:=|\cA|\ge2$ letters.
(We are mainly interested in the case $q=2$, and then we let
$\cA=\set{\sH,\sT}$
as in Litt's original formulation with coin tossing.)
Let $\Xi_n=\xi_1\dotsm\xi_n$ be a random word with $n$ letters $\xi_i\in\cA$,
chosen independently and uniformly at random. Hence, for any given word
$a_1\dotsm a_n$, the probability
\begin{align}\label{ju1}
  \P\xpar{\xi_1\dotsm\xi_n=a_1\dotsm a_n}=q^{-n}.
\end{align}
Fix two distinct words $A=a_1\dotsm a_\ell$ and $B:=b_1\dotsm b_\ell$ of the
same length $\ell\ge1$. 
The letters in $\Xi_n$ are drawn one by one;
Alice scores a point when the last $\ell$ letters form $A$,
and Bob scores a point when they form $B$. Denote their total scores by
$S_{A,n}$ and $S_{B,n}$;
we are interested in the difference $\hS_n:=S_{A;n}-S_{B,n}$.

To put this into our framework, define for $1\le k\le n-k+1$ the
subword
\begin{align}\label{ju2}
  W_k:=\xi_k\dotsm\xi_{k+\ell-1} \in\cA^\ell,
\end{align}
and 
for any word $U\in \cA^\ell$,  define the indicator 
\begin{align}\label{ju3}
  I_{U}(V):=\indic{U=V},
\qquad V\in \cA^\ell.
\end{align}
Then the net score of the $(k+\ell-1)$th draw (i.e., the $k$th draw that may
score) is
\begin{align}\label{ju4}
  X_k:=I_A(W_k)-I_B(W_k),
\end{align}
and thus the final net score is
\begin{align}\label{ju5}
  \hS_n:=\sum_{k=1}^{n-\ell+1} X_k.
\end{align}

We may as well assume that the random letter $\xi_i$ is defined for every
$i\ge1$; then also $W_k$, $X_k$, and $S_n$ are defined for all $k\ge1$ and
$n\ge\ell$. It is obvious that the sequence of random words $(W_k)\xoo$
forms a stationary, homogeneous Markov chain with state space $\cA^\ell$;
furthermore, it is easy to see that this chain is irreducible and aperiodic.
By comparing \eqref{ju5} and \eqref{jp1} we see that
\begin{align}\label{ju6}
  \hS_n = S_{n-\ell+1}
\end{align}
for the function  $g:\cA^\ell\to\bbZ$  given by $g:=I_A-I_B$.
Note that for any fixed word $U\in\cA^\ell$ and any $k\ge1$ we have
\begin{align}\label{ju7}
  \E[I_U(W_k)] = \P(W_k=U)=q^{-\ell},
\end{align}
and thus
\begin{align}\label{ju8}
  \mu:=\E g(W_1)
=\E[I_A(W_1)]-\E[I_B(W_1)]
=q^{-\ell}-q^{-\ell}=0.
\end{align}
We are thus in the setting of \refT{T1} and \refC{C1}, 
provided that \eqref{em4} holds.
All that remains is thus to calculate $\gss$ and $\gk_3$ and to verify
\eqref{em4}.

We provide two methods, one a direct method that is specific
  to this application, and a second method that involves the explicit formulas 
\eqref{t1a}--\eqref{t1c}
for the moments. 

We introduce some more notation.
For a  pair of two words $U=u_1\dotsm u_\ell$ and $V=v_1\dotsm v_\ell$
of length $\ell$, let
\begin{align}\label{jv1}
  \Theta(U,V):=\set{ 1\le k\le \ell-1: u_{\ell-k+1}\dotsm u_\ell = v_1\dotsm v_k},
\end{align}
i.e., the set of integers $k<n$ such that the words consisiting of the last
$k$ letters of $U$ and the first $k$ letters of $V$ are the same; in other
words, $U$ and $V$ may be concatenated with an overlap of $k$ letters.
(Note that this is not symmetric: in general $\Theta(U,V)\neq\Theta(V,U)$.)
Note that we do not include $\ell$ in $\Theta(U,V)$, even if $U=V$.
We define also the rational number, 
which may be called \emph{overlap index},
\begin{align}\label{tau}
  \theta_{UV}:
=\sum_{k\in\Theta(U,V)} q^{k-\ell}
=q^{-\ell}\sum_{k\in\Theta(U,V)} q^{k}
.\end{align}
This is equivalent to the quantity $[U|V]$ defined by Basdevant et al.\ \cite{Basdevant}, which plays a key role in their analysis. More precisely, $\theta_{UV}=q^{-\ell}[U|V]$.

For two fixed words $U,V\in\cA^\ell$, and $1\le j\le\ell-1$,
we have $I_U(W_i)I_V(W_{i+j})=0$ unless $\ell-j\in\Theta(U,V)$, and it follows
that, since $W_i$ and $W_{i+j}$ together contain $\ell+j$ random letters,
\begin{align}\label{jv4}
  \E\bigsqpar{I_U(W_i)I_V(W_{i+j})}
=\indic{\ell-j\in\Theta(U,V)}\,q^{-(\ell+j)}.
\end{align}
Hence
\begin{align}\label{jv5}
\sum_{j=1}^{\ell-1}  \E\bigsqpar{I_U(W_i)I_V(W_{i+j})}
&=
\sum_{j=1}^{\ell-1}\indic{\ell-j\in\Theta(U,V)}\,q^{-(\ell+j)}
\notag\\&
=
\sum_{i=1}^{\ell-1}\indic{i\in\Theta(U,V)}\,q^{i-2\ell}
=q^{-\ell}\theta_{UV}.
\end{align}

We return to $S_n$.
We have $\E S_n=n\mu=0$ by \eqref{ju8} and thus $\Var S_n =
\E[S_n^2]$. Furthermore, by the definition \eqref{jp1} 
(or \eqref{ju5}--\eqref{ju6})
and expanding,
\begin{align}\label{jv2}
\E[S_n^2] &
=\sum_{i,j=1}^n\E\sqpar{X_iX_j}
=\sum_{i=1}^n\E\sqpar{X_i^2}
+2\sum_{i=1}^n\sum_{k=1}^{n-i}\E\sqpar{X_iX_{i+k}}.
\end{align}
If $k\ge\ell$, then $W_i$ and $W_{i+k}$ consist of different letters from
$\Xi$, and thus they are independent, which implies
$\E\sqpar{X_iX_{i+k}}=\E\sqpar{X_i}\E[X_{i+k}]=0$;
hence it suffices to take $k<\ell$ in the inner sum in \eqref{jv2}.
Furthermore, it is clear that for any fixed $k\ge0$, 
the expectation $\E\sqpar{X_iX_{i+k}}$ does not
depend on $i$. Hence, \eqref{jv2} yields
\begin{align}\label{jv3}
\E[S_n^2] &
=n\E\sqpar{X_1^2}
+2n\sum_{k=1}^{\ell-1}\E\sqpar{X_1X_{1+k}} + O(1),
\end{align}
where the $O(1)$ comes from the missing terms with $0\le n-i<k<\ell$.
Consequently, \eqref{t1kk2} yields
\begin{align}\label{gss}
\gss=\E\sqpar{X_1^2} +2\sum_{k=1}^{\ell-1}\E\sqpar{X_1X_{1+k}}.
\end{align}
Recalling \eqref{ju4}, we see that
\begin{align}\label{jv66}
X_k^2 = I_A(W_k)+I_B(W_k).  
\end{align}
Hence, \eqref{ju7} yields
\begin{align}\label{jv7}
  \E\sqpar{X_1^2} =   \E[I_A(W_k)] + \E[I_B(W_k)]
=2q^{-\ell}.
\end{align}
Furthermore, \eqref{ju4} and \eqref{jv5} yield
\begin{align}\label{jv8}
  \sum_{k=1}^{\ell-1}\E\sqpar{X_1X_{1+k}}
=q^{-\ell}\bigpar{\theta_{AA}-\theta_{AB}-\theta_{BA}+\theta_{BB}}.
\end{align}
We conclude from \eqref{gss} and \eqref{jv7}--\eqref{jv8} that
\begin{align}\label{gss9}
\gss
=2q^{-\ell}\bigpar{1+\theta_{AA}-\theta_{AB}-\theta_{BA}+\theta_{BB}}.
\end{align}

For the third cumulant $\gk_3$ we argue similarly.
First, since $\E S_n=0$, we have $\kk_3(S_n)=\E[S_n^3]$, and by expanding
followed by combining equal terms,
\begin{multline}
  \E[S_n^3] 
=\sum_{i,j,k=1}^n \E[X_iX_jX_k]
=\sum_{1\le i\le n}\E[X_i^3]
+3\sum_{1\le i<j\le n}\E[X_i^2X_j]
\\
+3\sum_{1\le i<j\le n}\E[X_iX_j^2]
+6\sum_{1\le i<j<k\le n}\E[X_iX_jX_k].
\end{multline}
Furthermore, all terms in the sums on the \rhs{} with $j\ge i+\ell$ or $k\ge
j+\ell$ vanish by independence, and the terms are invariant under
a simultaneous shift of the indices. Hence \eqref{t1kk3} yields
\begin{align}\label{jw1}
\gk_3
=\E[X_1^3]
+3\sum_{s=1}^{\ell-1}\E[X_1^2X_{1+s}]
+3\sum_{s=1}^{\ell-1}\E[X_1X^2_{1+s}]
+6\sum_{s,t=1}^{\ell-1}\E[X_1X_{1+s}X_{1+s+t}].
\end{align}
We have $X_1^3=X_1$, and thus $\E[X_1^3]=0$.
Furthermore, \eqref{ju4} and \eqref{jv66} together with \eqref{jv5} show that
\begin{align}
  \label{jw2}
\sum_{s=1}^{\ell-1}\E[X_1^2X_{1+s}]
&=q^{-\ell}\bigpar{\theta_{AA}-\theta_{AB}+\theta_{BA}-\theta_{BB}},
\\\label{jw3}
\sum_{s=1}^{\ell-1}\E[X_1X_{1+s}^2]
&=q^{-\ell}\bigpar{\theta_{AA}+\theta_{AB}-\theta_{BA}-\theta_{BB}}.
\end{align}
Arguing as in \eqref{jv5} yields, for three fixed words $U,V,T\in\cA^\ell$,
\begin{align}\label{jw5}
&\sum_{j,k=1}^{\ell-1}  \E\bigsqpar{I_U(W_i)I_V(W_{i+j})I_T(W_{i+j+k})}
\notag\\&\hskip4em
=
\sum_{j,k=1}^{\ell-1}\indic{\ell-j\in\Theta(U,V)}\indic{\ell-k\in\Theta(U,V)}
\,q^{-(\ell+j+k)}
\notag\\&\hskip4em
=q^{-\ell}\theta_{UV}\theta_{VT}.
\end{align}
Hence, \eqref{ju4} yields
\begin{align}\label{jw6}
  \sum_{s,t=1}^{\ell-1}\E[X_1X_{1+s}X_{1+s+t}]&
=q^{-\ell}\bigl(
\theta_{AA}\theta_{AA}-\theta_{AA}\theta_{AB}-\theta_{AB}\theta_{BA}+\theta_{AB}\theta_{BB}
\notag\\[-0.5\baselineskip]&\hskip4em
{}-\theta_{BA}\theta_{AA}+\theta_{BA}\theta_{AB}+\theta_{BB}\theta_{BA}-\theta_{BB}\theta_{BB}
\bigr)
\notag\\&\hskip-2em
=q^{-\ell}(\theta_{AA}-\theta_{BB})(\theta_{AA}+\theta_{BB}-\theta_{AB}-\theta_{BA})
.\end{align}
Finally we obtain from \eqref{jw1} and \eqref{jw2}--\eqref{jw6}, after some
cancellations, 
\begin{align}\label{gk3}
  \gk_3 = 6q^{-\ell}(\theta_{AA}-\theta_{BB})
(1+\theta_{AA}+\theta_{BB}-\theta_{AB}-\theta_{BA})
= 3\gss(\theta_{AA}-\theta_{BB})
.\end{align}

It is also possible to obtain this result from the
  explicit moment formulas. 
Denote by $P_\ell$ the transition matrix, indexed by all $q^\ell$ strings
of length $\ell$. Here, a transition $U\to V$ exists if and only if the
last $\ell-1$ characters of $U$ coincide with the first $\ell-1$
characters of $V$. It is easy to see that $P_\ell$ is primitive and the
stationary distribution is uniform: $\pi=q^{-\ell}\one$; 
see \eqref{cp3} below.
The only remaining difficulty in applying the formulas \eqref{t1a}--\eqref{t1c}
is to calculate
the group inverse $(I-P_\ell)\gx$. It turns out that the entries of this
matrix coincide with the overlap indices $\theta_{UV}$, up to a
normalization. 
\begin{proposition}\label{P4}
The entries of the group inverse $Q:=(I-P_\ell)\gx$ are given by 
\begin{equation}\label{cp1}
(I-P_\ell)\gx_{U,V}=\indic{U=V}+\theta_{UV}-\ell q^{-\ell}
\end{equation}
\end{proposition}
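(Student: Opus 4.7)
The plan is to combine the limit representation from \refProp{groupinvformulaprop},
\[
(I-P_\ell)\gx=\lim_{t\upto 1}\bigpar{(I-tP_\ell)^{-1}-\one\pi\tr/(1-t)},
\]
with an explicit formula for the powers of $P_\ell$. Because the stationary distribution is uniform, i.e.\ $\pi=q^{-\ell}\one$, the correction $\one\pi\tr/(1-t)$ is just the constant matrix with every entry $q^{-\ell}/(1-t)$, so the $(U,V)$-entry of $(I-P_\ell)\gx$ is obtained from the generating function $\sum_{n\ge 0}t^n(P_\ell^n)_{U,V}$ by subtracting $q^{-\ell}/(1-t)$ and letting $t\upto 1$.

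The main computational step is then to evaluate $(P_\ell^n)_{U,V}=\P(W_{1+n}=V\mid W_1=U)$. Conditioning on $W_1=U$ fixes $\xi_1\dotsm\xi_\ell=u_1\dotsm u_\ell$, and the event $W_{1+n}=V$ imposes $\xi_{n+1}\dotsm\xi_{n+\ell}=v_1\dotsm v_\ell$; the two constraints overlap in $\max(\ell-n,0)$ positions. For $n=0$ one gets $(P_\ell^n)_{U,V}=\indic{U=V}$; for $1\le n\le\ell-1$ consistency in the overlap is exactly the condition $\ell-n\in\Theta(U,V)$ from \eqref{jv1}, and conditional on it the $n$ remaining letters are free, giving $(P_\ell^n)_{U,V}=q^{-n}\indic{\ell-n\in\Theta(U,V)}$; for $n\ge\ell$ the two windows are disjoint and $(P_\ell^n)_{U,V}=q^{-\ell}$.

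Substituting $k=\ell-n$ in the middle range and summing the series yields
\[
(I-tP_\ell)^{-1}_{U,V}
=\indic{U=V}
+\sum_{k\in\Theta(U,V)}(t/q)^{\ell-k}
+\frac{q^{-\ell}t^\ell}{1-t}.
\]
Subtracting $q^{-\ell}/(1-t)$ turns the last term into $-q^{-\ell}(1+t+\dotsm+t^{\ell-1})$, which tends to $-\ell q^{-\ell}$ as $t\upto 1$, while the middle sum tends to $\sum_{k\in\Theta(U,V)}q^{k-\ell}=\theta_{UV}$ by the definition \eqref{tau}. Combining the three limits gives exactly \eqref{cp1}. The only real subtlety we foresee is bookkeeping: the index $k=\ell$ is deliberately excluded from $\Theta(U,V)$, so the diagonal contribution that would nominally correspond to ``an overlap of length $\ell$'' must be carried separately as the $n=0$ term of the generating function rather than absorbed into the $\theta_{UV}$ sum.
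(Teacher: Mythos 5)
Your proposal is correct and follows essentially the same route as the paper: compute $(P_\ell^n)_{U,V}$ in the three ranges $n=0$, $1\le n\le\ell-1$, $n\ge\ell$, recognize the middle range as giving $\theta_{UV}$, and feed these into the limit formula of Proposition~\ref{groupinvformulaprop}. The only difference is that you carry out explicitly the geometric-series bookkeeping and the $t\upto 1$ limit that the paper leaves as ``a simple calculation,'' including the correct treatment of the $n=0$ diagonal term that cannot be folded into $\theta_{UV}$ since $\ell\notin\Theta(U,V)$.
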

\begin{proof}
Introduce the notation $tail_k(U)$ for the string obtained by deleting the
first $k$ characters of $U$. Analogously for $head_k(U)$. 
It is straightforward 
to see that for any $U,V$ we have 
\begin{align}\label{cp2}
(P^j)_{U,V}& =  q^{-j}\indic{tail_j(U)=head_j(V)},&& 0\leq j<\ell,\\
(P^j)_{U,V}& =  q^{-\ell}, && j\geq \ell.\label{cp3}
\end{align}
Further,
\begin{align}\label{cp4}
    \theta_{UV}& =  \sum_{k=1}^{l-1}q^{k-l}\indic{tail_{l-k}(U)=head_{l-k}(V)}
\notag\\    & 
=  \sum_{j=1}^{l-1}q^{-j}\indic{tail_{j}(U)=head_{j}(V)}
=  \left(\sum_{j=1}^{\ell-1}P^j\right)_{U,V}
.\end{align}
At this point, the result follows 
by a simple calculation
from the formula 
$(I-P)\gx=\lim_{t\upto 1}\bigpar{\sum_{k\geq 0}t^kP^k-\one\pi\tr/(1-t)}$ 
in Proposition \ref{groupinvformulaprop}. 
\end{proof}
In our case, the diagonal matrix $G$ is given by 
$G_{AA}=1$, $G_{BB}=-1$, and $G$ is otherwise zero.

Clearly  $\mu=0$, by \eqref{t1kk1} or \eqref{t1a}.
For $\gss$ and $\gk_3$ we use \eqref{t1e}--\eqref{t1f}, noting that by
\refProp{P4}, $Q'_{U,V}=\theta_{UV}-\ell q^{-\ell}$.
Hence, 
\begin{align}
 \pi\tr G Q' G \one = q^{-\ell} \one\tr G Q' G \one 
= \theta_{AA}+\theta_{BB}-\theta_{AB}-\theta_{BA}.
\end{align}
Furthermore, evidently  $\pi\tr G^2 \one =2 q^{-\ell}$.
Hence, \eqref{t1e} yields
\begin{align}
 q^\ell\sigma^2& =  \one\tr G^2\one+2\one\tr G Q'G\one + 0\\
& =  2+2(\theta_{AA}+\theta_{BB}-\theta_{AB}-\theta_{BA})
\end{align}
in agreement with \eqref{gss9}.
The calculation of the third cumulant \eqref{gk3}
by this method is similar, and the details are omitted.


We now obtain a preliminary  result for Litt's game.
This will be improved to \refT{TAB} in \refS{SLitt2}
where we identify the cases where the condition \eqref{em4} does not hold.
\begin{theorem}\label{TAB0}
Let Alice and Bob play Litt's game above with distinct words $A$ and $B$ of the
same length $\ell$ in an alphabet $\cA$ with $q$ letters, and assume that
$n$ letters are chosen  at random, uniformly and independently.
Assume also that \eqref{em4} holds.
Then, with $\theta_{UV}$ and $\gss$ given by \eqref{tau} and \eqref{gss9},
$\gss>0$ and
\begin{align}\label{tab01}
\P(\text{\rm Alice wins}) &
= \frac12+\frac{\theta_{BB}-\theta_{AA}-1}{2\sqrt{2\pi\gss}}n\qqw+O(n\qw),
\\\label{tab02}
\P(\text{\rm Bob wins}) &
= \frac12+\frac{\theta_{AA}-\theta_{BB}-1}{2\sqrt{2\pi\gss}}n\qqw+O(n\qw),
\\\label{tab03}
\P(\text{\rm Tie}) &
= \frac{1}{\sqrt{2\pi\gss}}n\qqw+O(n\qw),
\end{align}
and thus
\begin{align}\label{tab04}
\P(\text{\rm Alice wins}) 
-
\P(\text{\rm Bob wins}) &
= \frac{\theta_{BB}-\theta_{AA}}{\sqrt{2\pi\gss}}n\qqw+O(n\qw).
\end{align}
\end{theorem}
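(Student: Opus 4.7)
The plan is to deduce Theorem~\ref{TAB0} as a direct application of Corollary~\ref{C1} to the random variable $\hS_n$ from \eqref{ju5}, with the computations from Section~\ref{SAB} plugged in. All of the structural hypotheses have already been verified: after \eqref{ju6} we noted that $(W_k)$ on $\cA^\ell$ is stationary, irreducible, and aperiodic; \eqref{ju8} gives $\mu = 0$; the aperiodicity condition \eqref{em4} is assumed in the statement of Theorem~\ref{TAB0}; and Theorem~\ref{T1} then guarantees $\gss > 0$. The explicit values $\gss$ from \eqref{gss9} and $\gk_3$ from \eqref{gk3} are in hand.

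First I would translate the three outcomes of the game into sign conditions on $\hS_n$: Alice wins iff $\hS_n > 0$, Bob wins iff $\hS_n < 0$, and a tie occurs iff $\hS_n = 0$. By \eqref{ju6}, $\hS_n = S_{n-\ell+1}$, so I would apply Corollary~\ref{C1} to $S_m$ with $m = n - \ell + 1$. Next I would substitute the identity $\gk_3 = 3\gss(\theta_{AA} - \theta_{BB})$ from \eqref{gk3} into the first Edgeworth correction, which collapses it to
\[
\frac{\gk_3}{6\gs^3\sqrt{2\pi m}} = \frac{\theta_{AA} - \theta_{BB}}{2\gs\sqrt{2\pi m}} = \frac{\theta_{AA} - \theta_{BB}}{2\sqrt{2\pi\gss m}}.
\]
Combining this with the $\pm\frac{1}{2\gs\sqrt{2\pi m}}$ terms from \eqref{c1+}--\eqref{c1-} produces numerators $\theta_{AA} - \theta_{BB} \pm 1$, which matches \eqref{tab01}--\eqref{tab02} (with a sign flip for Alice after taking the complement $\P(\hS_n > 0) = 1 - \P(\hS_n \le 0)$). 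Equation \eqref{tab03} follows identically from \eqref{c10}, and \eqref{tab04} from subtracting \eqref{tab02} from \eqref{tab01}.

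The last bookkeeping step is to absorb the index shift $m = n - \ell + 1$ into the error. Since $m^{-1/2} = n^{-1/2} + O(n^{-3/2})$ and the leading constants in front are bounded in $n$, this replacement only perturbs each expression by $O(n^{-3/2})$, which is harmless against the $O(n^{-1})$ error already present.

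I do not foresee any genuine obstacle: the content of Theorem~\ref{TAB0} is the Edgeworth expansion of Corollary~\ref{C1} composed with the algebraic simplifications coming from \eqref{gss9} and \eqref{gk3}. The one satisfying observation is the factorization $\gk_3 = 3\gss(\theta_{AA} - \theta_{BB})$, which is what makes the Edgeworth coefficient express purely in terms of the overlap indices and hence yields the clean form of \eqref{tab01}--\eqref{tab04}. The only subtlety that remains outside this proof is verifying when the aperiodicity hypothesis \eqref{em4} actually holds, which is precisely what will be taken up when refining Theorem~\ref{TAB0} into Theorem~\ref{TAB} in Section~\ref{SLitt2}.
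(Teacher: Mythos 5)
Your proposal is correct and is precisely the argument the paper gives (the paper's own proof is just the one-line reference ``A consequence of \refC{C1}, recalling \eqref{ju5} and using \eqref{gss9} and \eqref{gk3} above''); you have simply spelled out the substitution $\gk_3 = 3\gss(\theta_{AA}-\theta_{BB})$, the translation of the game outcomes into sign conditions on $\hS_n$, and the harmless index shift $m = n-\ell+1$.
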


\begin{proof}
 A consequence of \refC{C1}, 
recalling \eqref{ju5} and using \eqref{gss9} and \eqref{gk3} above.
\end{proof}

\begin{example}[$\sH\sH$ vs $\sH\sT$]
  Litt's original game is the case $\cA=\set{\sH,\sT}$, $q=2$, $\ell=2$,
  $A=\sH\sH$, $B=\sH\sT$ of \refT{TAB}.
We have $\Theta(A,A)=\Theta(A,B)=\set1$ and $\Theta(B,A)=\Theta(B,B)=\emptyset$, and
thus
\begin{align}
  \theta_{AA}=\theta_{AB}=\tfrac12, \quad \theta_{BA}=\theta_{BB}=0.
\end{align}
Hence, \eqref{gss9} yields $\gss=\frac12$, and 
\eqref{tab1}--\eqref{tab4} yield
\eqref{grg1}--\eqref{grg2}, with error terms $O(n\qw)$.
\end{example}

\begin{remark}
  We have here represented the score $\hS_n$ using a (finite-state) Markov
  chain. We may also note that the sequence $X_k$ is $(\ell-1)$-dependent,
which means that general results for sums of $m$-dependent variables can be
applied to $\hS_n$. See e.g.\ \cite{Heinrich1984,Rhee,Loh}
for some related results; however, we have not been able to find a general
result that applies to our situation.
\end{remark}

\section{Proof of \refTs{T1}--\ref{T2}}\label{SpfT1}

\subsection{A lemma}
We will use the following  
simple uniform version of the spectral radius formula \eqref{spec}.
We do not know a reference so we give a proof for completeness.
\begin{lemma}\label{LK}
  Let $z\mapsto A(z)$ be a continuous square-matrix-valued function defined
  on some 
  compact set $K$.
Suppose that for some $r>0$, we have $\rho(A(z))<r$ for every $z\in K$. Then
there exists $\rx<r$ such that
\begin{align}\label{lk}
  \norm{A(z)^n}\le C \rx^n
\end{align}
for some constant $C$, uniformly for all $z\in K$ and $n\ge1$.
\end{lemma}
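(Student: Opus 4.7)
The plan is to combine the pointwise spectral radius formula \eqref{spec} with a standard compactness argument on $K$. The three key ingredients are: the hypothesis $\rho(A(z))<r$ pointwise gives, via \eqref{spec}, some power $A(z_0)^{N_0}$ of small norm; continuity of $z\mapsto A(z)^{N_0}$ propagates this bound to a neighborhood of $z_0$; and a finite subcover of $K$ converts the local estimates into a uniform one.

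First I would establish the existence of an intermediate $\rx$ with $\sup_{z\in K}\rho(A(z))<\rx<r$. Since the eigenvalues of a matrix depend continuously on its entries (as a multi-set), the spectral radius $B\mapsto\rho(B)$ is continuous on the space of matrices, hence so is $z\mapsto\rho(A(z))$ on the compact set $K$; it attains its maximum $s<r$ there, and any $\rx\in(s,r)$ works. Next, for each $z_0\in K$, formula \eqref{spec} yields an integer $N(z_0)\ge1$ with $\norm{A(z_0)^{N(z_0)}}<\rx^{N(z_0)}$, and by continuity of $z\mapsto A(z)^{N(z_0)}$ the same strict inequality persists on an open neighborhood $U_{z_0}$ of $z_0$ in $K$.

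Extract a finite subcover $U_{z_1},\dots,U_{z_p}$ of $K$, write $N_i:=N(z_i)$, set $N^*:=\max_i N_i$, and note that $M:=\sup_{z\in K}\norm{A(z)}<\infty$ by continuity and compactness. For any $z\in K$ and any $n\ge1$, choose $i$ with $z\in U_{z_i}$ and decompose $n=qN_i+s$ with $0\le s<N_i$. Submultiplicativity of the operator norm then gives
\[
\norm{A(z)^n}\le\norm{A(z)^{N_i}}^{q}\norm{A(z)}^{s}\le\rx^{qN_i}M^{s}=\rx^n\bigpar{M/\rx}^{s}\le C\rx^n,
\]
with $C:=\bigl(\max\set{1,\,M/\rx}\bigr)^{N^*}$ independent of $z$ and $n$, as required. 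The main (mild) obstacle is the first step, confirming $\sup_{z\in K}\rho(A(z))<r$; this is where continuity (or at least upper semi-continuity) of the spectral radius is genuinely used, while the remaining steps reduce to a routine compactness and Euclidean-division argument.
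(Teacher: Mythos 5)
Your proof is correct and follows essentially the same route as the paper: the spectral radius formula gives a good power locally, continuity of $z\mapsto A(z)^N$ upgrades this to a neighborhood, a finite subcover globalizes it, and Euclidean division of $n$ finishes the estimate. The one organizational difference is that you fix a single $\rx$ up front by invoking continuity of the spectral radius together with compactness of $K$, whereas the paper sidesteps that (mild but nontrivial) fact by choosing a separate $r_z\in(\rho(A(z)),r)$ at each point and taking $\rx:=\max_i r_{z_i}$ over the finite subcover at the end; both variants are valid, and yours is a touch cleaner at the cost of using one extra standard fact.
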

\begin{proof}
  Let $z\in K$, and choose $r_{z}$ with $\rho(A(z))<r_z<r$. 
Then the assumption and \eqref{spec} show that there exists
  $N$ such that 
$\norm{A(z)^N}<r_{z}^N$.
Since $z\mapsto A(z)$ is continuous, and the operator norm is a continuous
functional, it follows that there exists an open neighbourhood $U_z$ of
$z$ such that for all $w\in U_z$,
\begin{align}\label{jr1}
\norm{A(w)^N}<r_{z}^N.    
\end{align}
Since $\norm{AB}\le\norm{A}\cdot\norm{B}$ for two matrices $A$ and $B$,
it follows that for any $w\in U_z$ and all $n\ge1$, 
by writing $n=kN+\ell$ with $0\le \ell<N$,
\begin{align}\label{jr2}
\norm{A(w)^n}\le \norm{A(w)^N}^k\norm{A(w)^\ell}
\le r_{z}^{kN}\norm{A(w)^\ell}
\le r_{z}^{n}\max_{0\le\ell<N}\bigpar{r_{z}^{-\ell}\norm{A(w)^\ell}}.
\end{align}
For fixed $\ell$, $\norm{A(w)^\ell}$ is a continuous function of $z$, and 
is thus bounded in $K$. Consequently \eqref{jr2} implies that 
\begin{align}\label{jr3}
\norm{A(w)^n}
\le C_z r_{z}^{n}
\end{align}
holds for some $C_z$, uniformly for $w\in U_z$ and $n\ge1$.
We may cover the compact set $K$ by a finite number of such open sets
$U_{z_i}$,
$i=1,\dots,M$;
thus \eqref{lk} follows for all $z\in K$ with $C:=\max_i C_{z_i}$
and $\rx:=\max_i r_{z_i}<r$.
\end{proof}

\subsection{Proof of \refT{T1}}

We assume in this subsection the assumptions of \refT{T1}.

For  complex $z\neq0$, define the matrix
\begin{align}\label{jp6}
  P(z)=(P(z)_{ij})_{i,j\in\cW}
\quad\text{where}\quad
P(z)_{ij}:=P_{ij}z^{g(j)}
\end{align}
and the vector
\begin{align}\label{jp7}
\pi_1(z)=(\pi_1(z)_{i})_{i\in\cW}
\quad\text{where}\quad
\pi_1(z)_{i}:=\pi_{1;i}z^{g(i)}
.\end{align}
Note that $P(1)=P$ and $\pi_1(1)=\pi_1$.
Furthermore, 
the matrix-valued function $P(z)$ and the
vector-valued function $\pi_1(z)$ are 
analytic functions of
$z\neq0$. 

Let 
\begin{align}\label{jp8}
G_n(z):=\E z^{S_n}, 
\end{align}
i.e., the probability generating function of $S_n$
(in a generalized sense since $S_n$ may take both positive and negative integer
values).
Then $G_n(z)$ is a well-defined analytic (in fact, rational) function
for complex $z\neq0$, since $S_n$ takes only a
finite number of values for each $n$.
Note that for real $t$,
\begin{align}\label{lu66}
G_n(e^{\ii t})=\E e^{\ii t S_n}=\gf_{S_n}(t),  
\end{align}
the characteristic function of $S_n$.

We will use the
following well-known representation.
Recall that $\one=(1,\dots,1)$, the (column) vector with all entries 1.
\begin{lemma}\label{L1}
  For any $z\neq0$ and $n\ge1$,
  \begin{align}\label{l1}
G_n(z) 
= \pi_1(z)\tr P(z)^{n-1}\one.
  \end{align}
\end{lemma}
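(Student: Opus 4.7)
The plan is to prove the identity by directly expanding $\E z^{S_n}$ as a sum over trajectories of the Markov chain and recognising the result as a matrix product. Since $S_n$ takes only finitely many values for each $n$, there is no analytic subtlety, only bookkeeping.

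First I would write, using \eqref{jp1},
\begin{align*}
G_n(z) = \E z^{\sumkn g(W_k)}
= \sum_{w_1,\dots,w_n\in\cW}
\P(W_1=w_1,\dots,W_n=w_n)\prod_{k=1}^n z^{g(w_k)}.
\end{align*}
By the Markov property the joint probability factors as $\pi_{1;w_1}\prod_{k=1}^{n-1} P_{w_k w_{k+1}}$. The key bookkeeping step is to distribute the $n$ factors $z^{g(w_k)}$: attach $z^{g(w_1)}$ to $\pi_{1;w_1}$, producing $\pi_1(z)_{w_1}$ as in \eqref{jp7}, and attach each remaining factor $z^{g(w_{k+1})}$ (for $1\le k\le n-1$) to the transition entry $P_{w_k w_{k+1}}$, producing $P(z)_{w_k w_{k+1}}$ as in \eqref{jp6}.

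After this reorganisation the sum reads
\begin{align*}
G_n(z) = \sum_{w_1,\dots,w_n\in\cW}\pi_1(z)_{w_1}
\prod_{k=1}^{n-1} P(z)_{w_k w_{k+1}},
\end{align*}
and successively summing over $w_n, w_{n-1},\dots,w_2$ by the usual matrix-multiplication rule (the innermost sum over $w_n$ contracts against the all-ones vector $\one$) gives exactly $\pi_1(z)\tr P(z)^{n-1}\one$. The base case $n=1$ then reduces to $\pi_1(z)\tr\one=\sum_{w}\pi_{1;w}z^{g(w)}=\E z^{g(W_1)}=G_1(z)$, which is consistent with the convention $P(z)^0=I$. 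The only real choice in the proof is how to split the $n$ factors $z^{g(w_k)}$ between the initial vector and the $n-1$ transition matrices, and I do not anticipate any obstacle beyond this bookkeeping. An essentially equivalent alternative would be a one-line induction on $n$, using $G_{n+1}(z)=\sum_{w}\E\bigsqpar{z^{S_n}\indic{W_n=w}}\bigpar{P(z)\one}_{w}$ together with $\E\sqpar{z^{S_n}\indic{W_n=\cdot}}=\pi_1(z)\tr P(z)^{n-1}$ as an identity of row vectors, but the direct combinatorial expansion is the cleanest route.
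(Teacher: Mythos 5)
Your proof is correct and follows essentially the same route as the paper: both expand $\E z^{S_n}$ as a sum over trajectories, factor the joint probability via the Markov property, and read off the result as the matrix product $\pi_1(z)\tr P(z)^{n-1}\one$. The only difference is that you spell out the bookkeeping step of distributing the $z^{g(w_k)}$ factors more explicitly than the paper does.
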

\begin{proof}
Since $\P(W_1=i_1,\dots,W_n=i_n)=\pi_{1;i_1}P_{i_1i_2}P_{i_2i_3}
\dotsm P_{i_{n-1}i_n}$, we have
  \begin{align}\label{l1a}
G_n(z) 
=
\E \prodkn z^{g(W_k)}
= \sum_{i_1,\dots,i_n\in\cW} \pi_{1;i_1} z^{g(i_1)}P_{i_1i_2}z^{g(i_2)} 
\dotsm P_{i_{n-1}i_n}z^{g(i_n)} 
,\end{align}
which equals the \rhs{} of \eqref{l1}.
\end{proof}

We aim at finding good estimates of the \chf{} $\gf_{S_n}(t)=G_n(e^{\ii t})$. 
We consider first small $t$.
The following asymptotic formula is central in our arguments.
\begin{lemma}  \label{L2}
If $\gd\in(0,1)$ is small enough, then there exist analytic functions $\eta(z)$
and $\gl(z)$ in $D_\gd:=D(1,\gd)$ 
and a constant $c\in(0,1)$ such that for  all $z\in D_\gd$ and $n\ge1$ 
\begin{align}\label{l2}
G_n(z) 
=\eta(z)\gl(z)^{n}\bigpar{1+O(c^{n})}
\end{align}
and, somewhat more precisely,
\begin{align}\label{l2b}
G_n(z) 
=\eta(z)\gl(z)^{n}\bigpar{1+O(c^{n}|z-1|)}
.\end{align}
Furthermore, $\gl(z)$ is an eigenvalue of $P(z)$, and $\eta(1)=\gl(1)=1$.
\end{lemma}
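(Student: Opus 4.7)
The plan is to apply standard analytic perturbation theory for a simple eigenvalue to the matrix family $P(z)$ near $z=1$. Since $P=P(1)$ is irreducible and aperiodic, it is primitive, so by Perron--Frobenius the eigenvalue $1$ is algebraically simple with right eigenvector $\one$ and left eigenvector $\pi\tr$, and all other eigenvalues lie strictly inside the unit disc. Consequently, for $z$ in a sufficiently small disc $D_{\gd_0}$ around $1$, there exist an analytic simple eigenvalue $\gl(z)$ of $P(z)$ with $\gl(1)=1$ and an analytic spectral projection $\Pi(z)$ with $\Pi(1)=\one\pi\tr$, while the rest of the spectrum of $P(z)$ stays uniformly inside a disc of some radius $c_0<1$. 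These statements may be invoked from Kato's perturbation theory, or derived directly via the contour integral $\Pi(z)=\frac{1}{2\pi\ii}\oint_\gamma(\zeta I-P(z))\qw \dd\zeta$, where $\gamma$ is a small circle around $1$ separating it from the other eigenvalues of $P$.

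Setting $N(z):=P(z)-\gl(z)\Pi(z)$, the standard spectral projection identities yield $\Pi(z)^2=\Pi(z)$ and $\Pi(z)N(z)=N(z)\Pi(z)=0$, hence $P(z)^n=\gl(z)^n\Pi(z)+N(z)^n$ for every $n\ge1$. Applying \refL{L1} then gives
\[
G_n(z)=\gl(z)^{n-1}\pi_1(z)\tr\Pi(z)\one+\pi_1(z)\tr N(z)^{n-1}\one.
\]
I would then define $\eta(z):=\gl(z)\qw\pi_1(z)\tr\Pi(z)\one$, so that the first summand equals $\eta(z)\gl(z)^n$, and verify $\eta(1)=\gl(1)=1$ using $\Pi(1)=\one\pi\tr$, $\pi\tr\one=1$, and $\pi_1\tr\one=1$.

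For the error term $h_n(z):=\pi_1(z)\tr N(z)^{n-1}\one$, I apply \refL{LK} to the continuous matrix-valued function $N(z)$ on the compact disc $\overline{D_\gd}$ for some $\gd<\gd_0$: the spectral radius $\rho(N(z))$ is continuous in $z$ and equals $\rho(N(1))<1$ at the center, so after shrinking $\gd$ if necessary I obtain $\norm{N(z)^n}\le Cr_1^n$ uniformly on $\overline{D_\gd}$ for some $r_1<1$. By continuity I can likewise arrange $|\gl(z)|\ge c_1>r_1$ on $D_\gd$, and with $c:=r_1/c_1<1$ this gives $|h_n(z)|=O(c^n|\gl(z)|^n)$ uniformly. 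Dividing through by $\eta(z)\gl(z)^n$, which is bounded and bounded away from $0$ on $D_\gd$, yields the estimate \eqref{l2}.

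The refined bound \eqref{l2b} rests on the crucial vanishing identity $N(1)\one=P\one-\one\pi\tr\one=\one-\one=0$, which forces $h_n(1)=0$ for all $n\ge1$. Since $h_n$ is analytic in $D_{\gd_0}$, the quotient $h_n(z)/(z-1)$ is also analytic there; applying the maximum principle on a slightly larger disc $D_{\gd'}$ with $\gd<\gd'<\gd_0$, together with the previous bound $|h_n|\le Cr_1^n$ on $D_{\gd'}$, yields $|h_n(z)|=O(r_1^n|z-1|)$ uniformly on $D_\gd$. Dividing by $\eta(z)\gl(z)^n$ once more yields \eqref{l2b}. The only real obstacle is the bookkeeping of the nested shrinkings of $\gd$, ensuring that the spectral separation, the lower bound on $|\gl(z)|$, and the headroom needed for the maximum-principle argument can all be secured simultaneously with a single final $\gd>0$; this is routine but must be done with care.
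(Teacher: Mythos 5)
Your proof follows essentially the same strategy as the paper's: isolate a simple analytic eigenvalue $\gl(z)$ near $1$, decompose $P(z)^{m}$ into the rank-one part $\gl(z)^m\Pi(z)$ plus a uniformly contracting remainder via \refL{LK}, apply \refL{L1}, and then upgrade the $O(c^n)$ bound to $O(c^n|z-1|)$ by dividing out $z-1$ and using the maximum principle. The cosmetic differences (invoking Kato/contour integrals for analyticity of $\Pi(z)$ rather than the implicit function theorem on the characteristic polynomial, and working with $\Pi(z)$ directly rather than the paper's explicitly normalized $u(z)\tr,v(z)$) do not change the substance.

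There is one small imprecision worth flagging. You write that $N(1)\one=0$ ``forces $h_n(1)=0$ for all $n\ge1$.'' This is true for $n\ge 2$, since then $N(1)^{n-1}\one=N(1)^{n-2}\bigl(N(1)\one\bigr)=0$, but it fails for $n=1$: $h_1(1)=\pi_1\tr N(1)^0\one=\pi_1\tr\one=1$. Worse, the underlying identity $P(z)^{m}=\gl(z)^{m}\Pi(z)+N(z)^{m}$ holds only for $m\ge 1$, so the decomposition $G_n(z)=\gl(z)^{n-1}\pi_1(z)\tr\Pi(z)\one+h_n(z)$ that defines $h_n$ is itself valid only for $n\ge 2$. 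The fix is easy and is essentially what the paper does: instead of tracing the zero at $z=1$ through $h_n$, observe directly that $G_n(1)=\E 1^{S_n}=1$ for every $n$, and likewise $\eta(1)\gl(1)^n=1$, so the quotient $G_n(z)/\bigl(\eta(z)\gl(z)^n\bigr)-1$ is analytic on $D_\gd$ and vanishes at $z=1$ for every $n\ge1$. That observation is slightly more robust, covers $n=1$ without a special case, and then the maximum-principle step goes through exactly as you describe. With that adjustment your argument is complete.
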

\begin{proof}
Denote the set of eigenvalues of $P(z)$ by $\gL(z)$.

Consider first $z=1$. 
The matrix $P(1)=P$ is stochastic, which means that $P\one=\one$, i.e.,
$\one$ is a right eigenvector with eigenvalue 1.
Moreover, by the Perron--Frobenius theorem and our assumptions
\eqref{em0}--\eqref{em1},
this eigenvalue is simple and all other eigenvalues $\gl_i$ of $P$
satisfy $|\gl_i|<1$.
Let
\begin{align}\label{rho'}
  \rho':=\max\set{|\gl|:\gl\in \gL(1)\setminus1}<1.
\end{align}
Let also $\rho_0=(2\rho'+1)/3$ and $\rho_1:=(1-\rho')/3$.
Thus $0<\rho_0<1-\rho_1<1$.
Furthermore, $P(1)$ has exactly one eigenvalue (viz.\ 1) in the open disc
$D(1,\rho_1)$, and all other eigenvalues in the (disjoint) open disc
$D(0,\rho_0)$.
The eigenvalues $\gL(z)$ are the roots of the characteristic polynomial of
$P(z)$, and the coefficients in this polynomial are continuous  (in fact,
analytic) functions of $z$. Hence it follows that there exists a small
$\gd\in(0,1)$ such that if 
$z\in D_\gd$ (i.e., $|z-1|<\gd$), then $P(z)$ has exactly 1 simple
eigenvalue in $D(1,\rho_1)$,  and all other eigenvalues in 
$D(0,\rho_0)$.
For $z\in D_\gd$,
denote the eigenvalue in $D(1,\rho_1)$ by $\gl(z)$.
Thus,
for $z\in D_\gd$,
\begin{align}\label{lu1}
  |\gl(z)-1|<\rho_1\quad\text{and}\quad
|\gl(z)|>1-\rho_1>\tfrac23
.\end{align}
Since $\gl(z)$ is a simple root of the characteristic polynomial of $P(z)$, it
follows from the implicit function theorem that
$\gl(z)$ is an analytic function of $z\in D_\gd$. 
Moreover, 
provided $\gd$ is chosen small enough,
the corresponding left and right eigenvectors $u(z)\tr$ and $v(z)$
can (and will) be normalized by 
\begin{align}\label{jq00}
u(z)\tr\one=1=u(z)\tr v(z),  
\end{align}
for every $z\in D_\gd$, 
and then $u(z)\tr$ and $v(z)$ are analytic functions of $z\in D_\gd$.
Note that, since $P(1)=P$, it follows from \eqref{jp4} and $P\one=\one$  that
$\gl(1)=1$ with normalized eigenvectors
\begin{align}\label{lu2}
  u(1)\tr=\pi\tr
\quad\text{and}\quad
v(1)=\one.
\end{align}

Let $z\in D_\gd$ and let 
\begin{align}\label{jQ}
\QX(z):=v(z)u(z)\tr;   
\end{align}
in other words, the matrix
$\QX(z)$ defines the operator $v\mapsto(u(z)\tr v)v(z)$, which is a
projection onto the eigenspace spanned by $v(z)$; 
in particular, 
\begin{align}\label{Q2}
  \QX(z)^2=\QX(z).
\end{align}
Moreover, $\QX(z)$ commutes
with $P(z)$ and 
\begin{align}\label{jq0}
P(z)\QX(z)=\QX(z)P(z)=\gl(z)\QX(z).   
\end{align}
Consequently, by elementary spectral theory,
\begin{align}\label{jq1}
\tP(z):=P(z)-\gl(z)\QX(z)=(I-\QX(z))P(z)
\end{align}
has the set of eigenvalues
$\gL(z)\setminus\set{\gl(z)}\cup\set0$. This set is contained in $D(0,\rho_0)$
and thus by the definition \eqref{rho}
the spectral radius
\begin{align}\label{jq2}
  \rho(\tP(z))<\rho_0.
\end{align}
Consequently,  the spectral radius formula \eqref{spec} implies that for
some constant $C=C(z)$
\begin{align}\label{jq3}
  \norm{\tP(z)^n}\le C\rho_0^n.
\qquad n\ge1,
\end{align}
By decreasing $\gd$ a little, we may assume that
\eqref{jq3} holds on $\overline{D_\gd}$,
and then, by \refL{LK}, \eqref{jq3} holds uniformly for all  $z\in D_\gd$
with some $C=C(\gd)$.
 
By \eqref{jq0}--\eqref{jq1}, $\tP(z)$ and $\QX(z)$ commute, and
$\tP(z)\QX(z)=\QX(z)\tP(z)=0$. Hence, \eqref{jq1} and \eqref{Q2}
imply
\begin{align}\label{jq4}
  P(z)^n = \bigpar{\gl(z) \QX(z)+\tP(z)}^n
=\gl(z)^n\QX(z)+\tP(z)^n.
\end{align}
Consequently,
\refL{L1} yields,
recalling \eqref{lu1} and \eqref{jq00},
for $z\in D_\gd$,
\begin{align}\label{jq5}
G_n(z)&  
=\pi_1(z)\tr\bigpar{\gl(z)^{n-1}\QX(z)+\tP(z)^{n-1}}\one
\notag\\&
=\gl(z)^{n-1}\pi_1(z)\tr \QX(z)\one +O(\rho_0^{n-1})
\notag\\&
=\gl(z)^{n-1}(\pi_1(z)\tr v(z))(u(z)\tr\one)+O(\rho_0^{n}),
\notag\\&
=\gl(z)^{n-1}\Bigpar{\pi_1(z)\tr v(z)
+O\bigpar{c^n}}
\end{align}
with $c:=\xqfrac{\rho_0}{1-\rho_1}<1$;
note that the $O$ terms 
are uniform for $z\in D_\gd$
since \eqref{jq3} is and $\pi_1(z)$ is bounded for $z\in D_\gd$.
We may, by decreasing $\gd$ if necessary,
assume that $|\pi_1(z)\tr v(z)|>1/2$ for $z\in D_\gd$, and then
\eqref{jq5} yields \eqref{l2} with
\begin{align}
  \label{jq6}
\eta(z):=\gl(z)\qw \pi_1(z)\tr v(z)
.\end{align}
We have noted $\gl(1)=1$, and thus \eqref{jq6} and \eqref{lu2}
yield $\eta(1)=\pi_1\tr \one=1$.

Finally, \eqref{l2} can be written
\begin{align}\label{lu3}
\lrabs{\frac{ G_n(z)}{\eta(z)\gl(z)^n}-1} \le C c^n,
\qquad z\in D_\gd,\; n\ge1.
\end{align}
The function $G_n(z)/\bigpar{\eta(z)\gl(z)^n}-1$ on the \lhs{}
is analytic and vanishes at $z=1$;
hence we can divide it by $z-1$ and obtain, by the maximum principle,
\begin{align}\label{lu4}
\lrabs{\frac{\xqfrac{G_n(z)}{\eta(z)\gl(z)^n}-1}{z-1}} \le C c^n,
\qquad z\in D_\gd,\; n\ge1,
\end{align}
which is \eqref{l2b}.
\end{proof}

We may assume (and actually already have assumed in the proof) that $\gd$ is
so small that $\eta(z)\neq0$ and $\gl(z)\neq0$ in $D_\gd$, and thus
$\log \eta(z)$ and $\log\gl(z)$ are defined there.
Let $\gd_0$ be so small that 
if $z$ is a complex number with 
$|z|<\gd_0$, then
$e^{\ii z}\in D_\gd$. We then can define the analytic functions,
for $|z|<\gd_0$,
\begin{align}\label{lu5}
  \psi(z):=\log \gl\bigpar{e^{\ii z}}
\quad\text{and}\quad
\gam(z):=\log \eta\bigpar{e^{\ii z}}.
\end{align}
We obtain from \eqref{l2b} the estimate
\begin{align}\label{lu6}
  \log G_n(e^{\ii z}) = n\psi(z)+\gam(z)+O\bigpar{|z| c^n },
\qquad |z|<\gd_0.
\end{align}
Note that, recalling \eqref{lu66},  \eqref{lu6}
resembles the elementary decomposition of the characteristic function of the sum
of i.i.d.\ variables as a power of the characteristic function of an
individual variable, but we have here also two ``error terms''.

\begin{lemma}
  \label{L3}
The cumulants of $S_n$ are given by
\begin{align}\label{l3a}
  \kk_m(S_n) = \ii^{-m}\psi^{(m)}(0) n+ O(1)
=\gk_m n + O(1)
\end{align}
for every $m\ge1$,
where
\begin{align}\label{l3b}
  \gk_m:=
\ii^{-m}\psi^{(m)}(0).
\end{align}
(The implicit constant may depend on $m$, but not on $n$.)
In particular,
\begin{align}\label{l3c}
\frac{\kk_m(S_n)}{n}
\to \gk_m
\quad\text{as \ntoo}
.\end{align}
%
\end{lemma}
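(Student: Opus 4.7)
The plan is to read off the cumulants by differentiating the expansion \eqref{lu6} at $t=0$. Set
\begin{align*}
E_n(z) := \log G_n(e^{\ii z}) - n\psi(z) - \gam(z),
\end{align*}
which, by \eqref{lu6} together with the construction preceding it, is analytic on the complex disc $|z| < \gd_0$ and satisfies the uniform bound $|E_n(z)| \le C c^n |z|$ there. Since $\gf_{S_n}(t) = G_n(e^{\ii t})$ is the characteristic function of $S_n$, the cumulant definition \eqref{cu2} applied termwise gives
\begin{align*}
\ii^m \kk_m(S_n) = \frac{\ddx^m}{\ddx t^m}\log \gf_{S_n}(t)\,\Big|_{t=0} = n\,\psi^{(m)}(0) + \gam^{(m)}(0) + E_n^{(m)}(0).
\end{align*}
Hence the lemma reduces to estimating the $m$-th derivative of $E_n$ at the origin.

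The key step is to apply Cauchy's integral formula to $E_n$ on a fixed circle $|z|=r$ with $r < \gd_0$. Since $E_n$ is analytic on $|z|<\gd_0$ and bounded by $Crc^n$ on $|z|=r$, we obtain
\begin{align*}
\bigabs{E_n^{(m)}(0)} \le \frac{m!}{r^m}\,\max_{|z|=r}|E_n(z)| \le \frac{m!\,C r c^n}{r^m} = O(c^n),
\end{align*}
with an implied constant depending on $m$ and $r$ but not on $n$. Noting that $\gam^{(m)}(0)$ is a finite constant depending only on $m$, the identity above becomes
\begin{align*}
\kk_m(S_n) = \ii^{-m}\psi^{(m)}(0)\,n + \ii^{-m}\gam^{(m)}(0) + O(c^n) = \gk_m n + O(1),
\end{align*}
which is \eqref{l3a}. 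Dividing by $n$ and letting $\ntoo$ then immediately gives \eqref{l3c}.

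The only real subtlety, which I consider the main (but mild) obstacle, is justifying that \eqref{lu6} may be treated as an analytic identity on the whole small complex disc rather than merely as an asymptotic along the real line; once this is granted, the Cauchy estimate above carries all the weight. This is already arranged by the construction in the proof of \refL{L2}: shrinking $\gd_0$ if necessary so that $\eta(z)$ and $\gl(z)$ are nonvanishing on $D_\gd$, and so that for all $n\ge 1$ the $O$-term in \eqref{l2b} stays in a region where the principal branch of the logarithm is analytic, makes $E_n$ a well-defined analytic function with the stated bound. For the finitely many small $n$ for which the bound \eqref{lu6} is not already sharp, the cumulant $\kk_m(S_n)$ is a constant in $n$ and is trivially absorbed in the $O(1)$ of \eqref{l3a}.
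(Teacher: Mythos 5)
Your proof is correct and follows essentially the same approach as the paper: the paper's proof says ``by \eqref{cu2} and Cauchy's estimate'' applied to the analytic identity \eqref{lu6}, which is exactly the argument you spell out explicitly via the function $E_n(z)$ and the circle $|z|=r$. The potential subtlety you flag at the end is not really a gap in the paper, since \eqref{lu6} is stated for complex $z$ with $|z|<\gd_0$ from the outset (it comes from \eqref{l2b}, which is a bound for $z$ in the complex disc $D_\gd$), so the Cauchy estimate applies directly.
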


\begin{proof}
Since the functions in   \eqref{lu6} are analytic, we may differentiate
an arbitrary number of times, and obtain by \eqref{cu2} and Cauchy's estimate,
for every $m\ge1$,
\begin{align}\label{lu7}
  \ii^m\kk_m(S_n) =\frac{\ddx^m}{\ddx t^m}\log G_n(e^{\ii t})\bigr|_{t=0}
=n\psi^{(m)}(0)+\gam^{(m)}(0)+O\bigpar{c^n },
\end{align}
which is a more precise form of \eqref{l3a}--\eqref{l3b}.
\end{proof}

We have so far allowed any initial distribution $\pi_1$, but we now, for
simplicity, assume that the Markov chain $(W_n)\xoo$ is stationary, i.e.,
that the initial distribution $\pi_1=\pi$, and thus $\pi_k=\pi$ for every
$k$ by \eqref{jp3} and \eqref{jp4}. Then the random variables $X_k$ have the
same distribution.
As in \eqref{t1kk1}, denote their mean by
\begin{align}
  \mu:=\E X_k.
\end{align}

\begin{corollary}\label{CL3}
  Suppose that the Markov chain $(W_n)\xoo$ is stationary.
  Then
  \begin{align}\label{cl3}
    \gk_1=\mu,
\qquad
\psi'(0)=\ii\mu,
\qquad
\gam'(0)=0.
  \end{align}
\end{corollary}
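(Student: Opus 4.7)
The plan is to exploit the exact identity $\E S_n = n\mu$ that is available in the stationary case, and compare it with the general asymptotic expansion for $\kk_1(S_n)$ already established in \refL{L3}.

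First I would observe that stationarity gives $\E X_k = \E X_1 = \mu$ for every $k\ge1$, and hence $\kk_1(S_n) = \E S_n = n\mu$ exactly, with no error term. Next, I would specialize \eqref{lu7} to $m=1$, which yields
\begin{align*}
\ii\,\kk_1(S_n) = n\,\psi'(0) + \gam'(0) + O(c^n).
\end{align*}
Substituting $\kk_1(S_n)=n\mu$, this becomes
\begin{align*}
\ii n \mu = n\,\psi'(0) + \gam'(0) + O(c^n).
\end{align*}

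The two identities for $\psi'(0)$ and $\gam'(0)$ then fall out by matching the $n$-dependence. Dividing by $n$ and letting $\ntoo$ gives $\psi'(0)=\ii\mu$ (which is exactly \eqref{l3c} for $m=1$). Substituting back leaves $\gam'(0)=O(c^n)$; since the left-hand side is independent of $n$ and the right-hand side tends to $0$ as $\ntoo$, we conclude $\gam'(0)=0$. Finally, the definition \eqref{l3b} gives $\gk_1=\ii^{-1}\psi'(0)=\ii^{-1}\cdot\ii\mu=\mu$, which matches the informal statement $\gk_1=\mu$.

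There is no real obstacle here; the corollary is essentially a bookkeeping consequence of \refL{L3} together with the fact that in the stationary case the first cumulant is $n\mu$ on the nose rather than $n\mu+O(1)$. The only point that requires the stationarity hypothesis is precisely the vanishing of the $O(1)$ correction, which is what forces $\gam'(0)=0$; without stationarity one would still get $\psi'(0)=\ii\mu$ from the leading order in $n$, but $\gam'(0)$ would generally be nonzero.
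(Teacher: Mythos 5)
Your proof is correct and matches the paper's own argument: both rest on the exact identity $\kk_1(S_n)=\E S_n=n\mu$ in the stationary case, combined with the $m=1$ case of \eqref{lu7} (equivalently \eqref{l3c}) to read off $\psi'(0)=\ii\mu$ from the coefficient of $n$ and $\gam'(0)=0$ from the vanishing of the constant term. The paper states this more tersely, but the reasoning is the same.
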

\begin{proof}
  Since the random variables $X_k$ have the same distribution,
\eqref{cu4} yields
  \begin{align}\label{lu8}
    \kk_1(S_n)= \E S_n = \sumkn\E X_k = n\mu.
  \end{align}
The result follows from the case $m=1$ of \eqref{l3c} and \eqref{lu7}.
\end{proof}

We are now prepared to prove  the estimate of $\gf_{S_n}(t)$
that we need for small $t$.
\begin{lemma}\label{L4}
  Suppose that the Markov chain $(W_n)\xoo$ is stationary.
Then there exists $\gd>0$ such that, if\/ $|t|\le\gd$, then
\begin{align}\label{l4}
\gf_{S_n}(t) &
=e^{\ii n\mu t -n{\gk_2}t^2/2}
\Bigpar{1-\ii n\frac{\gk_3}{6}t^3}+ O\bigpar{(t^2+n^2t^6)e^{-n\gk_2t^2/4}}
.\end{align}
\end{lemma}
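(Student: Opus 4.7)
The plan is to derive \eqref{l4} from the key identity \eqref{lu6} by Taylor-expanding the analytic functions $\psi$ and $\gam$ around $t=0$ and then carefully exponentiating.

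By \refC{CL3} we have $\psi'(0)=\ii\mu$ and $\gam(0)=\gam'(0)=0$, and by \refL{L3}, $\psi^{(m)}(0)=\ii^m\gk_m$ for all $m\ge 1$. Taylor's theorem applied to the analytic functions $\psi$ and $\gam$ yields, for $|t|\le\gd_0$,
\begin{align*}
\psi(t)=\ii\mu t-\tfrac12\gk_2 t^2-\tfrac{\ii}{6}\gk_3 t^3+O(t^4),\qquad \gam(t)=O(t^2).
\end{align*}
Substituting into \eqref{lu6} gives, uniformly for $|t|\le\gd_0$ and $n\ge 1$,
\begin{align*}
\log\gf_{S_n}(t)=\ii n\mu t-\tfrac{n\gk_2}{2}t^2-\tfrac{\ii n\gk_3}{6}t^3+R(t,n),
\end{align*}
with remainder $R(t,n)=O(nt^4+t^2+|t|c^n)$.

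The next step is to factor out the Gaussian part: write $\gf_{S_n}(t)=e^{\ii n\mu t-n\gk_2 t^2/2}\cdot e^{F(t,n)}$, where $F(t,n):=-\tfrac{\ii n\gk_3}{6}t^3+R(t,n)$. Since $|e^{\ii n\mu t-n\gk_2 t^2/2}|=e^{-n\gk_2 t^2/2}$, the target \eqref{l4} is equivalent to showing
\begin{align*}
e^{F(t,n)}-\Bigpar{1-\tfrac{\ii n\gk_3}{6}t^3}=O\bigpar{(t^2+n^2 t^6)\,e^{n\gk_2 t^2/4}}.
\end{align*}
With $F_0:=-\tfrac{\ii n\gk_3}{6}t^3$, decompose $e^F-(1+F_0)=(e^F-1-F)+R(t,n)$. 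The $R$ contribution is absorbed using $nt^4\le\tfrac12(t^2+n^2 t^6)$ by AM--GM, together with $|t|c^n\le t^2$ once $|t|\ge c^n$ (the subregime $|t|<c^n$ being handled by a direct uniform Taylor comparison of $\gf_{S_n}$ with the target function, whose Taylor coefficients agree order by order up to $O(1)$ discrepancies in $n$, which yields an $O(t^2)$ bound compatible with the target).

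The main work, and the principal obstacle, is estimating $|e^F-1-F|$ compatibly with the target. I would use the elementary bound $|e^F-1-F|\le\tfrac12|F|^2 e^{|F|}$ and split into two regimes. When $n|t|^3\le 1$, $|F|$ is uniformly bounded, $e^{|F|}=O(1)$, and $|F|^2=O(n^2t^6+t^4)$, which yields the required estimate directly. When $n|t|^3>1$, the inequality $n|t|^3\le\gd\cdot nt^2$ gives $e^{|F|}\le C\exp(C'\gd\cdot nt^2)$; shrinking $\gd$ so that $C'\gd<\gk_2/8$ makes this $\le Ce^{n\gk_2 t^2/8}$, which is comfortably absorbed by the $e^{n\gk_2 t^2/4}$ factor on the right, the spare $e^{n\gk_2 t^2/8}$ swallowing any polynomial prefactor. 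Combining these estimates yields \eqref{l4}.
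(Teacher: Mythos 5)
Your proof follows essentially the same route as the paper's: Taylor-expand $\log\gf_{S_n}(t)$ to third order, exponentiate, and control the resulting error by bounding the real part of the exponent. The one real bookkeeping difference is in how the remainder is organized. You expand $\psi$ and $\gam$ separately starting from \eqref{lu6}, which leaves an $O(|t|c^n)$ term that you then must handle with a special sub-case $|t|<c^n$. The paper avoids this term entirely: in \eqref{luc9}--\eqref{lu9} it Taylor-expands $\log\gf_{S_n}(t)$ in its \emph{own} exact cumulants $\kk_m(S_n)$ (so the remainder is just $O(nt^4)$, controlled by the uniform fourth-derivative bound $O(n)$) and only afterwards substitutes $\kk_m(S_n)=n\gk_m+O(1)$, which is where the $O(t^2)$ appears; there is no residual $|t|c^n$ to chase. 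Your handling of the small sub-regime is stated a bit loosely: the fourth (and higher) Taylor coefficients of $\gf_{S_n}$ and of $e^{\ii n\mu t-n\gk_2 t^2/2}(1-\ii n\gk_3 t^3/6)$ do \emph{not} differ by $O(1)$ — they differ by quantities polynomial in $n$ — but the conclusion $O(t^2)$ nonetheless holds there because $|t|<c^n$ makes $n^k t^4=o(t^2)$ for any fixed $k$. Finally, your exponential estimate via $e^{|F|}$ is somewhat wasteful compared to the $e^{(\Re F)^+}$-type bound used implicitly in \eqref{er11} (which exploits that the leading cubic term $-\ii n\gk_3 t^3/6$ is purely imaginary), but as you correctly observe it still succeeds once $\gd$ is small enough. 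Overall the argument is sound, modulo tightening the $|t|<c^n$ sub-case.
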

\begin{proof}
Let $\gd<\gd_0$. Then \eqref{lu66}, \eqref{lu6} and Cauchy's estimate yield,
for $|t|\le\gd$,
\begin{align}\label{luc9}
\frac{\ddx^4}{\ddx t^4}\log \gf_{S_n}(t)
=
\frac{\ddx^4}{\ddx t^4}\log G_n(e^{\ii t})  
=
n\psi^{(4)}(t) + \gam^{(4)}(t)+ O\bigpar{c^n}
=O(n).
\end{align}
Consequently, a Taylor expansion as in \eqref{cu3} yields, 
using $\gf_{S_n}(1)=1$, \eqref{lu8} and \eqref{l3a},
for $|t|\le\gd$ and $n\ge1$,
\begin{align}\label{lu9}
  \log \gf_{S_n}(t) 
&=\ii\kk_1(S_n)t-\frac{\kk_2(S_n)}{2}t^2
-\ii\frac{\kk_3(S_n)}{6}t^3+ O(nt^4)
\notag\\&
=\ii n\mu t -n\frac{\gk_2}{2}t^2
-\ii n\frac{\gk_3}{6}t^3+ O(t^2+nt^4).
\end{align}
Hence,
\begin{align}\label{er1}
e^{-\ii n\mu t +n\xfrac{\gk_2 t^2}{2}} \gf_{S_n}(t) 
&=
\exp\Bigpar{-\ii n\frac{\gk_3}{6}t^3+ O(t^2+nt^4)}.
\end{align}
If $\gd$ is small enough, then the real part of the
argument of the exponential function in \eqref{er1}
is less than $n{\gk_2}t^2/4+C$, and consequently, by a Taylor expansion,
\begin{align}\label{er11}
e^{-\ii n\mu t +n\xfrac{\gk_2 t^2}{2}} \gf_{S_n}(t) 
&=
1-\ii n\frac{\gk_3}{6}t^3+ O(t^2+nt^4)
+O\Bigpar{\bigpar{n|t|^3+ t^2}^2e^{n{\gk_2}t^2/4}},
\end{align}
which yields \eqref{l4}, recalling that $|t|=O(1)$ and noting 
$nt^4\le (t^2+n^2t^6)/2$.
%
\end{proof}

We turn to estimating $\gf_{S_n}(t)$ for larger $t$, and again begin by
studying the matrix $P(z)$.
This is where we use our assumption \eqref{em4}.

\begin{lemma}\label{L5}
  If\/ $0<|t|\le\pi$, then
  \begin{align}\label{l5}
\rho\bigpar{P(e^{\ii t})} < 1.    
  \end{align}
\end{lemma}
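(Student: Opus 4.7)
The plan is to compare $P(e^{\ii t})$ with $P$ and rule out the boundary case via the aperiodicity assumption \eqref{em4}.

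\textbf{Step 1: The weak inequality.} Since $|P(e^{\ii t})_{ij}| = P_{ij}$, we have $|P(e^{\ii t})^n|_{ij} \le P^n_{ij}$ entrywise for every $n$. Combined with the spectral radius formula \eqref{spec} and $\rho(P)=1$, this immediately yields $\rho(P(e^{\ii t})) \le 1$.

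\textbf{Step 2: Analyze the boundary case.} Suppose for contradiction that $\rho(P(e^{\ii t})) = 1$, and let $\lambda = e^{\ii\phi}$ be an eigenvalue of modulus~$1$ with right eigenvector $v \neq 0$, so
\begin{equation*}
\sum_{j} P_{ij}\, e^{\ii t g(j)} v_j = e^{\ii\phi} v_i \qquad (i \in \cW).
\end{equation*}
Taking absolute values gives $|v_i| \le \sum_j P_{ij} |v_j|$. Multiplying by $\pi_i$, summing over $i$, and using $\pi\tr P = \pi\tr$ shows that this inequality must be an equality for every $i$. By the Perron--Frobenius theorem applied to $P$ (which is irreducible by \eqref{em0}), the non-negative vector $(|v_i|)$ is then a constant multiple of $\one$; normalize so that $|v_i| = 1$ and write $v_i = e^{\ii\theta_i}$.

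\textbf{Step 3: Extract the phase equation on edges.} Equality in the triangle inequality $\bigl|\sum_j P_{ij} e^{\ii t g(j)} e^{\ii\theta_j}\bigr| = \sum_j P_{ij}$ forces all summands with $P_{ij} > 0$ to have the same argument, namely $\phi + \theta_i \pmod{2\pi}$. Hence, whenever $P_{ij} > 0$,
\begin{equation*}
t\, g(j) + \theta_j \equiv \phi + \theta_i \pmod{2\pi}.
\end{equation*}

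\textbf{Step 4: Telescope along closed paths.} For any closed path $\cQ: i_0, i_1, \ldots, i_\ell = i_0$, summing the above congruence over the edges $i_{k-1} \to i_k$ makes the $\theta_{i_k}$'s telescope, yielding
\begin{equation*}
t\, g(\cQ) - \phi\, \ell(\cQ) \in 2\pi\bbZ.
\end{equation*}

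\textbf{Step 5: Invoke the aperiodicity condition.} Since the set $\{(g(\cQ),\ell(\cQ))\}$ generates $\bbZ^2$ by \eqref{em4}, there are integer combinations of closed paths producing $(1,0)$ and $(0,1)$. Taking the corresponding integer combinations of the displayed relation, we conclude $t \in 2\pi\bbZ$ and $\phi \in 2\pi\bbZ$. But $0 < |t| \le \pi$ forces $t \notin 2\pi\bbZ$, a contradiction.

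The main obstacle is the bookkeeping in Step 2--3: one must simultaneously use stationarity of $\pi$ to promote the triangle inequality into equality everywhere, and Perron--Frobenius to conclude that $|v|$ is constant; without the constant modulus, the ``equal argument'' conclusion in Step 3 would fail. The use of \eqref{em4} in Step 5 is exactly the two-dimensional lattice condition, which distinguishes our setting from the more restrictive ``strong aperiodicity'' of Nagaev mentioned in the text.
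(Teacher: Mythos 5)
Your proof is correct and follows essentially the same approach as the paper: weak inequality, then assume a unimodular eigenvalue, extract a phase identity on edges via equality in the triangle inequality, telescope over closed paths, and invoke the subgroup generated by $(g(\cQ),\ell(\cQ))$ being all of $\bbZ^2$. The paper works with a left eigenvector and gets $|\gl|\le 1$ by summing over $k$ directly (no need for $\pi$), while you use a right eigenvector, which forces the summing-against-$\pi$ trick and an explicit Perron--Frobenius appeal to see $|v|$ is a multiple of $\one$; one small remark is that constant modulus is not actually needed for the equal-argument step in Step 3 — strict positivity of $|v_j|$ (which follows more directly from irreducibility once equality holds) suffices, as in the paper — but your stronger conclusion is certainly valid and harmless.
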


\begin{proof}
  Let $\gl$ be an eigenvalue of $P(e^{\ii t})$, and let $u\tr=(u_j)_1^n$ be a
  corresponding   left eigenvector. Then
  \begin{align}\label{er2}
    \gl u_k = \sum_{j\in\cW}u_jP(e^{\ii t})_{jk}
= \sum_{j\in\cW}u_jP_{jk}e^{\ii g(k) t}
,\qquad k\in\cW,
\end{align}
and thus, by the triangular inequality,
  \begin{align}\label{er3}
    |\gl|\,| u_k| 
\le \sum_{j\in\cW}|u_j|P_{jk}
,\qquad k\in\cW
.\end{align}
Summing over all $k\in\cW$ yields, since $(P_{jk})$ is a stochastic matrix,
\begin{align}\label{er4}
    |\gl|\sum_{k\in\cW}| u_k| 
\le \sum_{j\in\cW}|u_j| \sum_{k\in\cW}P_{jk}
=\sum_{j\in\cW}|u_j|.
 \end{align}
Consequently, $|\gl|\le1$.

Suppose now, to obtain a contradiction, that $|\gl|=1$.
We then have equality in \eqref{er4}, and thus in \eqref{er3} for every $k$.
Note first that since $(P_{jk})$ is irreducible, 
it follows easily from equality in \eqref{er3} that 
we have $|u_k|>0$ for every $k\in\cW$.
Furthermore, equality when applying the triangle inequality to \eqref{er2}
implies
  \begin{align}\label{er5}
\frac{u_je^{\ii g(k) t}}{\gl u_k}>0
\end{align}
for all $j,k\in\cW$ such that $P_{jk}>0$.
It follows that for any path $\cQ$ given by $i_0,\dots,i_\ell$,
  \begin{align}\label{er6}
\frac{u_{i_0}e^{\ii g(\cQ) t}}{\gl^{\ell(\cQ)} u_{i_\ell}}
=
\prod_{k=1}^\ell\frac{u_{i_{k-1}}e^{\ii g(i_k) t}}{\gl u_{i_k}}
>0
\end{align}
and in particular, for a closed path $\cQ$
  \begin{align}\label{er7}
e^{\ii g(\cQ) t}\gl^{-\ell(\cQ)}
=\frac{e^{\ii g(\cQ) t}}{\gl^\ell(\cQ)}>0.
\end{align}
Furthermore, $|e^{\ii g(\cQ) t}|=|\gl^\ell(\cQ)|=1$, and thus, for every
closed path $\cQ$
\begin{align}\label{er8}
e^{\ii g(\cQ) t}\gl^{-\ell(\cQ)}=1.
\end{align}
The set of all $(k,\ell)\in\bbZ^2$ such that
\begin{align}
  \label{er9}
e^{\ii k t}\gl^{-\ell}=1 
\end{align}
is a subgroup, and thus it follows from
\eqref{em4} and \eqref{er8} that \eqref{er9} holds for all
$(k,\ell)\in\bbZ^2$. Taking $(k,\ell)=(1,0)$, this shows $e^{\ii t}=1$,
which contradicts the assumption on $t$.

This contradiction shows that $|\gl|<1$ for every eigenvalue $\gl$ of
$P(e^{\ii t})$, which is the same as \eqref{l5} by \eqref{rho}.
\end{proof}

\begin{lemma}\label{L6}
  For every $\gd>0$,
there exist $r=r(\gd)<1$ and $C$ such that if\/ 
$\gd\le|t|\le\pi$ and $n\ge1$,
then
\begin{align}
  \label{l6}
\norm{P(e^{\ii t})^n}\le C r^n.
\end{align}
\end{lemma}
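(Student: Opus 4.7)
The plan is to combine the preceding two lemmas in a straightforward way. Lemma~\ref{L5} supplies the pointwise spectral bound $\rho(P(e^{\ii t}))<1$ on the set where we want control, and Lemma~\ref{LK} is precisely the tool for upgrading such a pointwise spectral bound to a uniform geometric decay of operator-norm powers over a compact parameter set. So the whole proof reduces to checking that the hypotheses of Lemma~\ref{LK} are met.

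First I would define the parameter set $K_\gd := \set{e^{\ii t}:\gd\le |t|\le \pi}\subset\bbC\setminus\set0$. Since $\set{t\in\bbR:\gd\le|t|\le\pi}$ is compact and $t\mapsto e^{\ii t}$ is continuous, $K_\gd$ is compact. Next I would note that the map $z\mapsto P(z)$, defined by \eqref{jp6}, is an entry-wise polynomial (hence analytic, and in particular continuous) in $z$ on $\bbC\setminus\set0$, so its restriction to $K_\gd$ is a continuous matrix-valued function on a compact set.

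For the pointwise spectral hypothesis, I would invoke Lemma~\ref{L5} directly: every $z\in K_\gd$ has the form $e^{\ii t}$ with $0<|t|\le\pi$, so $\rho(P(z))<1$. Thus the hypotheses of Lemma~\ref{LK} are satisfied on $K_\gd$ with $r=1$. Applying Lemma~\ref{LK} yields $\rx=\rx(\gd)<1$ and a constant $C=C(\gd)$ such that
\begin{align*}
\norm{P(z)^n}\le C\rx^n \qquad\text{for all } z\in K_\gd\text{ and all } n\ge 1,
\end{align*}
which is exactly \eqref{l6} with $r:=\rx$.

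There really is no serious obstacle: the two nontrivial ingredients (the pointwise spectral estimate from the Perron--Frobenius-type argument in Lemma~\ref{L5} and the uniform passage from spectral radius to operator norm in Lemma~\ref{LK}) have already been proved, and the lemma is a direct corollary. The only thing one needs to be careful about is that $K_\gd$ does not approach $z=1$ (where $\rho(P(z))=1$), which is ensured by the lower bound $|t|\ge\gd>0$; this is precisely what makes \eqref{l5} usable here.
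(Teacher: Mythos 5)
Your proof is correct and takes essentially the same route as the paper: both deduce the lemma directly from Lemmas~\ref{L5} and~\ref{LK}, the only cosmetic difference being that the paper takes the compact set to be $K=\set{t\in\bbR:\gd\le|t|\le\pi}$ with the continuous function $t\mapsto P(e^{\ii t})$, whereas you take the image set $K_\gd=\set{e^{\ii t}:\gd\le|t|\le\pi}$ with $z\mapsto P(z)$.
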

\begin{proof}
A consequence of \refLs{L5} and \ref{LK}, taking
$K:=\set{t\in\bbR:\gd\le|t|\le\pi}$.
\end{proof}

\begin{lemma}\label{L7}
  For every $\gd>0$,
there exist $r=r(\gd)<1$ and $C$ such that if\/ 
$\gd\le|t|\le\pi$ and $n\ge1$,
then
\begin{align}
  \label{l7}
\bigabs{\gf_{S_n}(t)}\le C r^n.
\end{align}
\end{lemma}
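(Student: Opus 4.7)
The plan is to combine the spectral bound of \refL{L6} with the matrix representation of the \chf{} supplied by \refL{L1}. Concretely, since $\gf_{S_n}(t) = G_n(e^{\ii t})$ by \eqref{lu66}, \refL{L1} gives the identity
\begin{align}
\gf_{S_n}(t) = \pi_1(e^{\ii t})\tr P(e^{\ii t})^{n-1}\one.
\end{align}
I would then bound the right-hand side by applying the submultiplicativity of the operator norm together with the Cauchy--Schwarz inequality:
\begin{align}
\bigabs{\gf_{S_n}(t)} \le \bignorm{\pi_1(e^{\ii t})} \cdot \bignorm{P(e^{\ii t})^{n-1}} \cdot \bignorm{\one}.
\end{align}

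Now $\bignorm{\pi_1(e^{\ii t})}$ is a continuous function of $t\in[-\pi,\pi]$ (its entries are $\pi_{1;i}e^{\ii g(i)t}$, each of modulus $\pi_{1;i}$), hence bounded by some constant independent of $t$; and $\bignorm{\one} = |\cW|^{1/2}$ is a constant. For the middle factor, \refL{L6} supplies $r=r(\gd)<1$ and $C'$ such that $\bignorm{P(e^{\ii t})^{n-1}}\le C' r^{n-1}$ uniformly on the set $\gd\le|t|\le\pi$, for all $n\ge1$ (with $n=1$ treated trivially since $\bignorm{P(e^{\ii t})^{0}}=\bignorm{I}=1$, which still fits into a bound of the form $C r^n$ by enlarging $C$). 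Combining these gives \eqref{l7} after absorbing the factor $r^{-1}$ into a new constant $C$.

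There is essentially no obstacle: the work has already been done in \refL{L6}, and \refL{L7} is just the transcription of an operator-norm bound into a bound on a scalar matrix element. The only minor care required is to note that the vectors $\pi_1(e^{\ii t})$ and $\one$ contribute only a bounded multiplicative factor, uniformly over the compact range of $t$ in question.
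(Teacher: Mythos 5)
Your argument is correct and is essentially the same as the paper's, which proves \refL{L7} as a direct consequence of \refLs{L6} and \ref{L1} together with \eqref{lu66}; you have merely spelled out the operator-norm bookkeeping that the paper leaves implicit.
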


\begin{proof}
  A consequence of \refLs{L6} and \ref{L1} together with \eqref{lu66}.
\end{proof}

\begin{proof}[Proof of \refT{T1}]
The limits \eqref{t1kk2} and \eqref{t1kk3} exist by \eqref{l3c}; note that
$\gss=\gk_2$. 
We postpone the proof that $\gss>0$ to \refC{CO} in \refS{Sap}.
Define the normalized variable $\tS_n:=(S_n-n\mu)/(\gs\sqrt n)$
which has mean $\E\tS_n=0$ and variance $\Var(\tS_n)=1+O(1/n)$ 
by \eqref{lu8} and \eqref{l3a}.

Using the estimates of $\gf_{S_n}(t)$ in \refLs{L4} and \ref{L7},
the result \eqref{t1}
follows as the classical corresponding result for sums of i.i.d.\ random
variables \cite[Theorem IV.3]{Esseen}.
There are actually two proofs of this theorem in \cite{Esseen}.
We  use the second  proof in \cite[pp.~64--65]{Esseen},
taking there
\begin{align}\label{ess2}
  f_n(t):= \gf_{\tS_n}(t)=
e^{-\ii \mu \sqrt n t/\gs}\gf_{S_n}\Bigpar{\frac{t}{\gs\sqrt n}}
.\end{align}
We further take $T_{3\,n}$ there as $\gd \sqrt n$ with the same $\gd$ as in
\refL{L4}, and we have $d=1$ and $t_0=2\pi$.
A crucial part of the estimate is that \refL{L4} yields
\begin{align}
  \gf_{\tS_n}(t) &
=e^{-t^2/2}
\Bigpar{1-\ii \frac{\gk_3}{6\gs^3\sqrt n}t^3}
+ O\bigpar{n\qw(t^2+t^6)e^{-t^2/4}}
,\qquad |t|\le\gd\gs\sqrt n,
\end{align}
and
thus,
with $g(t):=e^{-t^2/2} \bigpar{1-\ii \frac{\gk_3}{6\gs^3\sqrt n}t^3}$,
\begin{align}\label{ess1}
\int_{-\gd \gs\sqrt n}^{\gd\gs\sqrt n}\lrabs{\frac{\gf_{\tS_n}(t)-g(t)}{t}}\dd t
= O\bigpar{n\qw},
\end{align}
which implies that $I_2'=O(n\qw)$ on \cite[p.~65]{Esseen}.
The rest of the estimates in \cite[pp.~64--65]{Esseen} hold without changes,
and we obtain \eqref{t1}; we omit the details.
We may also use the first proof in \cite[pp.~57--59]{Esseen},
again using \eqref{ess1}; however, this yields
\eqref{t1} with the slightly weaker error term
$O\bigpar{\frac{\log n}n}$ from the estimates of $I_k'$ and $\eps_1$ on
\cite[p.~59]{Esseen}. 

It remains to verify the matrix-based 
cumulant formulas
\eqref{t1a}--\eqref{t1f}.
By Lemma \ref{L3} and \eqref{lu5}, the cumulants are given in terms of the
first three derivatives of $\psi(z)=\log \lambda(e^{iz})$ at $z=0$. By
analyticity of 
$\lambda(e^{iz})$, it is enough to calculate the derivatives of $\log
\lambda(e^{t})$ where $t$ is real. 
This can be done by repeated differentiation of the eigenvalue equation and
some algebra, 
see \refApp{Aev}, but we will here instead
make use of \cite{haviv},
which studies this problem in great generality; in particular,
\cite[Theorem 4.1]{haviv} (with a correction of the sign of $\rho_3$)
provides for $P$ as above and any matrix $E$ and small $t$,
the Taylor series
\begin{align}\label{qa1}
\lambda(P+tE)=1+t\rho_1(E)+t^2\rho_2(E)+t^3\rho_3(E)+O(t^4),
\end{align}
where $\lambda(P+tE)$ denotes the largest eigenvalue,
and $\rho_i$ are the following expressions: 
\begin{align}\label{qa2}
\rho_1(E)&=\pi\tr E\one,\\
\rho_2(E)&=\pi\tr EQE\one,\label{qa3}\\
\rho_3(E)&=\pi\tr EQEQE\one-\rho_1(E)\cdot\pi\tr EQ^2E\one\label{qa4}
.\end{align}
Here, as before, we set $Q:=(I-P)\gx$. 
(Recall that since $P$ is irreducible,
it has a simple largest eigenvalue; by continuity, this holds also for all
small perturbations.)
We will use this result in a different, more general form. 
The expansion \eqref{qa1} holds uniformly for all matrices $E$ in a bounded set.
(This follows e.g.\ since the implicit function theorem implies that the
eigenvalue 
$\gl(P+tE)$ is (for small $t$) an analytic function of the entries in $tE$.)
Consequently, \eqref{qa1} holds also for a continuous matrix-valued function
$E(t)$. Let
$F(t)$ be a smooth one-parameter family of  matrices with
$F(0)=P$ and take $E(t):=(F(t)-F(0))/t$ (with $E(0):=F'(0)$);
then \eqref{qa1} gives an expansion of  $\lambda(F(t))$.
We want to extract the first three derivatives at the origin. For this
purpose, it is sufficient to replace $F(t)$ with its third-order Taylor
series
$P+tF'(0)+\frac12t^2F''(0)+\frac16t^3F'''(0)$, which gives
\begin{align}\label{qa5}
E(t):=F'(0)+\frac12tF''(0)+\frac16t^2F'''(0).  
\end{align}
By substituting \eqref{qa5} in \eqref{qa1}--\eqref{qa4},
we obtain
\begin{align}\label{qa6}
\lambda(F(t))& = 1+t\rho_1(E(t))+t^2\rho_2(E(t))+t^3\rho_3(E(t))+O(t^4)
\notag\\&
=1+m_1t+\tfrac12 m_2 t^2 + \tfrac16 m_3 t^3 + O(t^4),
\end{align}
where, 
collecting terms,
\begin{align}\label{qa7}
m_1&:=[t^1]\lambda(F(t)) =  [t^0]\rho_1(E(t))
=  \pi\tr F'(0)\one,\\
\tfrac12 m_2&:=[t^2]\lambda(F(t)) =  [t^1]\rho_1(E(t))+[t^0]\rho_2(E(t))
\notag\\&
= \tfrac12 \pi\tr F''(0)\one+\pi\tr F'(0)QF'(0)\one,
\label{qa8}\\
\tfrac16m_3&:=\lambda(F(t))[t^3]
 =  [t^2]\rho_1(E(t))+[t^1]\rho_2(E(t))+[t^0]\rho_3(E(t))
\notag\\&
= \tfrac16 \pi\tr F'''(0)\one
+\tfrac12 \pi\tr F'(0)QF''(0)\one+ \tfrac12 \pi\tr F''(0)QF'(0)\one
\notag\\&\qquad
+\pi\tr F'(0)QF'(0)QF'(0)\one
-\pi\tr F'(0)\one\cdot\pi\tr F'(0)Q^2F'(0)\one
.\label{qa9}
\end{align}
In our case, we set $F(t)=P(e^t)$, and it is easy to see from \eqref{jp6} that 
the derivatives are given by $F^{(k)}(0)=PG^k$ where
$G=\diag(g(1),\dots, g(m))$. 
Furthermore, the relation between the raw moments $m_i={\frac {d^i} {dt^i}}\big|_{t=0}\lambda(F(t))$ and cumulants $\gk_i={\frac {d^i} {dt^i}}\big|_{t=0}\log\lambda(F(t))$ is as usual: 
\begin{align}
\mu=\gk_1&=m_1,\label{qb1}\\
\sigma^2=\gk_2& =  m_2-m_1^2,\label{qb2}\\
\gk_3& =  m_3-3m_2m_1+2m_1^3\label{qb3}
.\end{align}
Plugging in \eqref{qa7}--\eqref{qa9}, 
and recalling $\pi\tr P=\pi\tr$,
we get the indicated expressions in
equations \eqref{t1a}--\eqref{t1c}.

Finally, \eqref{gi4} yields
\begin{align}\label{qb4}
  QP=Q-I+\one\pi\tr
=Q'+\one\pi\tr,
\end{align}
and as a  consequence, since $Q\one=0$,
\begin{align}\label{qb5}
Q^2P=Q(QP)=Q(Q'+\one\pi\tr)=QQ'=Q'^2+Q'.  
\end{align}
Substituting these in \eqref{t1b}--\eqref{t1c} yields \eqref{t1e}--\eqref{t1f}.
\end{proof}

\begin{proof}[Proof of \refC{C1}]
We obtain \eqref{c1+} directly from \eqref{t1} by taking $\mu=0$ and $x=0$,
noting that $\gth(0)=1/2$ by \eqref{gth}.
To obtain \eqref{c1-} we instead take $x<0$ and let $x\upto0$, noting
  that $\gth(0-)=-1/2$.

The formula \eqref{c1+-} then follows from
\begin{align}
  \P(S_n<0)-\P(S_n>0) = \P_n(S_n<0)+\P(S_n\le0)-1,
\end{align}
and \eqref{c10} follows from
$ \P(S_n=0) = \P_n(S_n\le0)-\P(S_n<0)$.  
\end{proof}

\subsection{Proof of \refT{T2}}

\begin{proof}[Proof of \refT{T2}]
Let $\hW_k:=(W_{k-1},W_k)$; this is easily seen to be a Markov chain in the
state space
\begin{align}
  \hcW:=\bigset{(i,j)\in\cW^2:P_{ij}>0}.
\end{align}
We can regard $g$ as a function $\hcW\to\bbZ$, and then the definition 
\eqref{jb1} yields the same $S_n$ as \eqref{jp1} for the chain
$(\hW_k)_1^\infty$. 
It is easily seen that the chain
$(\hW_k)_1^\infty$
is stationary, irreducible and aperiodic when
$(W_k)_0^\infty$ is, and the condition \eqref{em4} is the same for both chains.
Hence the result follows from \refT{T1} applied to
$(\hW_k)_1^\infty$.
\end{proof}

\section{The aperiodicity condition and non-degeneracy}\label{Sap}

The general results above use the aperiodicity condition \eqref{em4}.
In this section we give several equivalent forms of it, and also of the
condition $\gss>0$ which, as stated in \refT{T1} is a consequence.
We consider irreducible and aperiodic finite-state Markov chains as in 
\refS{SMarkov}. 
In particular, we specialize to Litt's game, and show that
these conditions are always
satisfied except for the rather trivial cases in \refEs{Egss2},
\ref{EH-T}, and \ref{EHH-TT} below.

\subsection{Bad examples}\label{Sbad}
We begin with a few examples where \eqref{em4} fails, illustrating the
reasons for it and showing the consequences.
The first two examples are degenerate with also $\gss=0$.

\begin{example}\label{Egss1}
Consider Litt's game with 
$A=\sH\sT$ and $B=\sT\sH$. 
It is then obvious by induction that
  \begin{align}\label{egss1}
    \hS_n = \indic{\xi_n=\sT}-\indic{\xi_1=\sT} \in \set{-1,0,1}.
  \end{align}
Thus \eqref{t1kk2} yields $\gss=0$.
Furthermore, it is easily seen that $g(\cQ)=0$ for every closed path $\cQ$,
and thus \eqref{em4} fails.
It is easy to see that the matrix $P(e^{\ii t})$ in \refS{SpfT1} has an
eigenvalue $\gl(e^{\ii t})=1$ for every real $t$, and thus
\refLs{L5}--\ref{L7} fail.
It follows trivially from \eqref{egss1} that for every $n\ge2$,
\begin{align}\label{egss1a}
\P(\text{\rm Alice wins}) =
\P(\text{\rm Bob wins}) =\tfrac14,
\qquad
\P(\text{\rm tie})=\tfrac12.
\end{align}
Hence, \eqref{tab1}--\eqref{tab3} utterly fail, while \eqref{tab4} holds
trivially.
\end{example}

\begin{example}\label{Egss2}
More generally, let $\ell\ge2$ and
consider Litt's game with 
$A=\sH\sT^{\ell-1}$ and $B=\sT^{\ell-1}\sH$. 
Then Alice scores when a run of at least $\ell-1$ tails has begun, and Bob
scores 
when such a run ends, and again it is obvious that
$\hS_n  \in \set{-1,0,1}$. Thus $\gss=0$.
Also, again $g(\cQ)=0$ for every closed path $\cQ$, and  \eqref{em4} fails.
As in \refE{Egss1}, it is easy to see that \eqref{tab1}--\eqref{tab3} fail,
while \eqref{tab4} holds trivially 
by symmetry (reversing the order of the coin tosses, cf.\ \cite{Grimmett}).

We will see below (\refT{Tgss0})
that,
as stated without proof in \cite{Basdevant},
this example and its obvious equivalent variants obtained by
interchanging Alice and Bob 
or $\sH$ and $\sT$ (or  both)
are the only cases in Litt's game where $\gss=0$.
\end{example}

In the following examples,  \eqref{em4} fails but $\gss>0$.
We will see that \refL{L5} fails for them, and as a consequence also
\refL{L6}; not surprisingly also \refL{L7} fails.

\begin{example}\label{EH-T}
  Consider the trivial case of Litt's game with $\ell=1$,
$A=\sH$ and $B=\sT$. 
In this case, $S_n$ is just the number of heads minus the number of tails.
Thus, 
\begin{align}\label{tq1}
S_n=2S'_n-n \quad\text{with}\quad S'_n\in\Bin(n,\tfrac12).
\end{align}
In particular  $S_n\equiv n\pmod2$ and thus $\P(S_n=0)=0$ when $n$
is odd, so \eqref{tab3} fails, and consequently (by symmetry) also
\eqref{tab1}--\eqref{tab2} fail.
(Asymptotic expansions of the probabilities are easily obtained from 
\eqref{tq1}, treating $n$ even and $n$ odd separately; we leave this to the
reader.)

Similarly, $g(\cQ)\equiv\ell(\cQ)\pmod2$ for every path $\cQ$
(closed or not). As a consequence, \eqref{em4} does not hold,
which is the reason why \refT{T1} and \refC{C1} do not apply.
We have $\gss=1>0$ by \eqref{gss9} (or as a consequence of \eqref{tq1}).
In the arguments in \refS{SpfT1}, we have by \eqref{jp6}
\begin{align}\label{tq3}
  P(z)
=\frac12 \begin{pmatrix}
 z&z\qw\\
z&z\qw   
  \end{pmatrix}
.\end{align}
This matrix has rank 1, with eigenvalues $(z+z\qw)/2$ and 0, and it follows
easily from \eqref{lu66} and \eqref{lu1} that
\begin{align}\label{tq5}
  \gf_{S_n}(t) = G_n(e^{\ii t}) 
= \pi_1(e^{\ii t})\tr P(e^{\ii t})^{n-1}\one
= \cos^{n} t,
\end{align}
which of course also follows directly from \eqref{tq1}.
Note that 
$P(-1)$ has an eigenvalue $-1$, which means that 
\refLs{L5} and \ref{L6} do not hold for $t=\pi$;
similarly, \eqref{tq5} shows that \refL{L7} fails for $t=\pi$.
\end{example}

\begin{example}\label{EHH-TT}
  Consider Litt's game with $A=\sH\sH$ and $B=\sT\sT$.
This game is obviously fair by symmetry, 
so \eqref{tab4} holds trivially (with $\theta_{AA}=\theta_{BB}=\frac12$), 
and it is checked below that
\eqref{tab1}--\eqref{tab3} also hold.
Thus the conclusions of \refT{TAB} hold;
nevertheless, \refT{T1} and \refC{C1} do not apply
because \eqref{em4} does not hold.
In fact, if we instead use the alphabet $\cA=\set{0,1}$,
we have, recalling \eqref{ju2}, 
\begin{align}\label{tk1}
g(W_k)=\pm\indic{\xi_k=\xi_{k+1}}
\equiv \xi_k-\xi_{k+1}+1\pmod2,  
\end{align}
and consequently, $g(\cQ)\equiv\ell(\cQ) \pmod2$ for every closed path $\cQ$,
which shows that \eqref{em4} does not hold.

In this case, it is easy to calculate the distribution of $\hS_{n+1}=S_n$
exactly using the arguments in \refS{SpfT1}. We may simplify 
the calculations
by letting
$(W_k)$ be a sequence of i.i.d.\ random bits
with $\P(W_k=0)=\P(W_k=1)=\frac12$, 
which is a trivial Markov chain,
and use the version \eqref{jb1} with
$g(i,j):=\indic{i=j=0}-\indic{i=j=1}$.
It is easily seen that for this version,
\eqref{l1} is modified to
\begin{align}  \label{tk2}
G_n(z)=\pi_0\tr P(z)^n\one
\end{align}
with
\begin{align}\label{tk3}
  P(z):=\bigpar{P_{ij}z^{g(i,j)}}_{i,j\in\cW}
=\frac12
  \begin{pmatrix}
 z&1\phantom{{}\qw}\\
1&z\qw   
  \end{pmatrix}
.\end{align}
This matrix has 
rank 1, with eigenvalues $(z+z\qw)/2$ and 0
(just as \eqref{tq3}), 
and it follows
that, 
for $n\ge1$,
\begin{align}\label{tk4}
  P\xpar{e^{\ii t}}^n =(\cos t)^{n-1} P(e^{\ii t}).
\end{align}
Hence, \eqref{tk2} yields
\begin{align}\label{tk5}
  \gf_{S_n}(t) = G_n(e^{\ii t}) = \frac{1+\cos t}{2}\cos^{n-1} t.
\end{align}
(This factorization has the probabilistic interpretation that
$S_n$ has the same distribution as a sum $\sum_{0}^{n-1}Y_j$ of independent
random variables, where $Y_0+1$ has the binomial distribution $\Bin(2,\frac12)$
and $Y_j=\pm1$ with probability $\frac12$ each for $j\ge1$.)
It follows easily that, for example, with $m:=\floor{n/2}$,
\begin{align}\label{tk6}
  \P(S_n=0) 
= 2^{-2m-1}\binom{2m}{m}
=
\frac{1}{\sqrt{2\pi n}}+O(n^{-3/2}).
\end{align}
We have $\gss=1$, by \eqref{gss9} or directly from \eqref{tk5} which implies
that $\Var S_n = n-\frac12$. Hence \eqref{tab3} holds, and thus (by symmetry)
also \eqref{tab1}--\eqref{tab2}.

This is as expected, but note that the next term in the expansion \eqref{tk6}, 
of order $n^{-3/2}$, will depend on the parity of $n$. The reason is that 
the matrix $P(e^{\ii\pi})=P(-1)$ has an eigenvalue $-1$ on the unit circle;
thus, although the \chf{} \eqref{tk5} vanishes at $t=\pi$, it is not
exponentially small for $t$ close to $\pi$, as it is in \refS{SpfT1} when we
assume \eqref{em4} and as a consequence \refL{L5} holds.
\end{example}

We end with an (artificial) example of a different type of Markov chain where
\eqref{em4} fails. 
\begin{example}\label{E4}
Consider a stationary Markov chain $(W_k)$
with 4 states $\cA=\set{a,b,c,d}$ and the transition
matrix
\begin{align}\label{tj1}
  P=
  \begin{pmatrix}
    0.45&0.45&0.1&0
\\
    0.45&0.45&0.1&0
\\
0&0&0&1
\\
0.5&0.5&0&0
  \end{pmatrix}
.
\end{align}
This is irreducible and aperiodic, with stationary distribution
$(\frac{5}{12},\frac{5}{12},\frac{1}{12},\frac{1}{12})$.
Let $g(a)=1$, $g(b)=-1$, $g(c)=g(d)=0$.
If we partition $\cW$ as $\set{a,b,d}\cup\set{c}$,
then $g(W_k)=0$ if $W_{k-1}$ and $W_k$ are in different parts, but 
$g(W_k)=\pm1$ if $W_{k-1}$ and $W_k$ are in the same part, and it follows
that
\begin{align}\label{tj2}
  g(W_k)\equiv \indic{W_{k}=c} - \indic{W_{k-1}=c}+1
\pmod2.
\end{align}
This implies that for any closed path $\cQ$,
\begin{align}\label{tj3}
  g(\cQ)\equiv \ell(\cQ)
\pmod2,
\end{align}
and thus \eqref{em4} does not hold.
It is easy to see, e.g.\ by the calculations below or by \refProp{PO}, that
$\gss>0$.

It follows also from \eqref{tj2} and induction that
\begin{align}\label{tj4}
  S_n=\sum_{k=1}^n g(W_k) \equiv \indic{W_1=d}+\indic{W_n=c}+n\pmod2.
\end{align}
As \ntoo, $W_1$ and $W_n$ are asymptotically independent, and thus
$\P(W_1=d, W_n=c)\to \frac{1}{12^2}$
and
$\P(W_1\neq d, W_n\neq c)\to \frac{11^2}{12^2}$.
Hence, if $n$ is even, then $S_n$ is even with probability
$\approx\frac{122}{144}=\frac{61}{72}$, while if $n$ is odd then
$S_n$ is even with probability $\approx\frac{11}{72}$.
Hence $S_n$ exhibits a strong periodicity, although the Markov chain itself is
aperiodic.

In the arguments of \refS{SpfT1}, this is reflected in the easily verified
fact that  $P(-1)$ has an eigenvalue $-1$, and thus $\rho(P(-1))=1$ so
\refL{L5} fails for $t=\pi$. \refL{L5} holds for $0<t<\pi$, and thus
\refLs{L6} and \ref{L7} hold for $\gd\le t\le \pi-\gd$.
Furthermore, for $z$ close to $-1$, we have in analogy with \eqref{l2}
an approximation
\begin{align}\label{cr1}
  G_n(z)=\eta_-(z)\gl(z)^n\bigpar{1+O(c^n)}
\end{align}
with $c<1$, where $\eta_-(z)$ and $\gl(z)$ are analytic and $\gl(-1)=-1$;
furthermore, $\gl(z)$ is an eigenvalue of $P(z)$, and we have
$\gl(z)=-\gl(-z)$.

We can now argue as in \refS{SpfT1}, but we have to include terms coming
from $z=-1$. For simplicity, consider 
\begin{align}\label{cr2}
  \P(S_n=0)=\frac1{2\pi}\int_{-\pi}^\pi\gf_{S_n}(t)
=\frac1{2\pi}\int_{-\pi}^\pi G_n(e^{\ii t}).
\end{align}
Calculations show that $\gl'(1)=\gl'(-1)=0$, 
$\gl''(1)=\tfrac56$,
$\gl''(-1)=-\tfrac56$,
$\eta(1)=1$ (as always), and
$\eta(-1)=\frac{25}{36}$;
\eqref{cr2} and standard arguments then yield
\begin{align}
  \P(S_n=0) =
\frac{3}{\sqrt{5\pi n}}\Bigpar{1+(-1)^n\frac{25}{36}+o(1)}
.\end{align}
This shows periodicity for $\P(S_n=0)$, and in particular \eqref{t1}
does not hold.
\end{example}

\subsection{The aperiodicity condition \eqref{em4}}\label{SSem4}

Let $L$ be the subgroup of $\bbZ^2$ generated by the set
$\bigset{(g(\cQ),\ell(\cQ)):\cQ \text{ is a closed path}}$ in \eqref{em4},
and let $G:=\bbZ^2/L$ be the corresponding quotient.
Thus the condition \eqref{em4} says $G=\set1$.

The projection $\Pi_2:(x,y)\mapsto y$ maps $L$ onto the subgroup 
$L_2$ of $\bbZ$
generated
by the set of $\ell(\cQ)$ for all closed paths $\cQ$, and 
since we assume the aperiodicity \eqref{em1}, we have $L_2= \bbZ$;
in other words, $\Pi_2$ is onto $\bbZ$.
Consequently, there exists some $b\in\bbZ$ such that $(b,1)\in L$.
Let 
\begin{align}\label{pj1}
  L_1:=\set{x\in\bbZ:(x,0)\in L}, 
\end{align}
i.e., the kernel of $\Pi_2$
regarded as a subgroup of $\bbZ$.
Then
\begin{align}\label{pj2}
  (x,y)\in L
\iff (x,y)-y(b,1)\in L
\iff x-yb\in L_1
\end{align}
and thus
\begin{align}\label{pj3}
  L=\bigset{(z+yb,y):y\in\bbZ,z\in L_1}
.\end{align}

Similarly, it follows from $(b,1)\in L$ that
every coset $\overline{(x,y)}\in G$ of $L$ has a
representative of the form $(z,0)$.
Consequently, the homomorphism $z\mapsto\overline{(z,0)}$ maps
$\bbZ$ onto $G$, 
and thus $G\cong \bbZ/L_1$.
Hence, $G$ is a cyclic group.
Let $N:=|G|\in\bbN\cup\set\infty$.
In particular, \eqref{em4} holds if and only if $N=1$.
There are two cases:

\begin{romenumerate}
  
\item 
{$L_1=\set0$, $N=\infty$, and $G\cong\bbZ$.}
Then \eqref{pj3} shows that 
$L=\set{y(b,1):y\in\bbZ}$.
The definition of $L$ shows that this holds
if and only if
\begin{align}\label{paj4}
  g(\cQ)=b\ell(\cQ)
\end{align}
for every closed path.
This is a very degenerate case, see \refSS{SSgss0}.

\item 
$L_1=N\bbZ$, $1\le N<\infty$, and $G\cong \bbZ_N$.
It follows  from \eqref{pj3} that $L$ is a two-dimensional lattice
with basis $\set{(N,0),(b,1)}$.
\end{romenumerate}

In both cases, we identify $G$ with $\bbZ_N$ (where $\bbZ_\infty:=\bbZ$)
in the natural way
by the isomorphism $\bbZ/L_1\to G$.  

Every possible step $ij$ with $P_{ij}>0$ in the Markov chain
generates a vector $v_{ij}:=(g(j),1)$ in $\bbZ^2$, and thus a corresponding
element $\gam_{ij}:=\overline{(g(j),1)}\in G$.
If we sum $v_{ij}$ along a closed path, we get (by definition) an element of
$L$, and thus the sum of $\gam_{ij}$ along a closed path vanishes in $G$.
Fix an (arbitrary) element $o\in\cW$, and pick for every $k\in \cW$ a path
$\cQ_k$ from $o$ to $k$. Let $\gam_k\in G$ be the sum of $\gam_{ij}$ along this
path. 
It is easy to see that since the sum along any closed path is 0, the sum
$\gam_k$ does not depend on the choice of $\cQ_k$.
Furthermore, if $P_{ij}>0$, then we may choose $\cQ_j$ as the path $\cQ_i$
followed by $j$, and thus
\begin{align}\label{paj5}
  \gam_j=\gam_i+\gam_{ij},
\qquad\text{if $P_{ij}>0$}.
\end{align}
Note also that since $(b,1)\in L$,
\begin{align}\label{paj6}
  \gam_{ij}=\overline{(g(j),1)} = \overline{(g(j)-b,0)}.
\end{align}
Hence, by identifying $G$ with $\bbZ_N$, we simply have
$\gam_{ij}=g(j)-b$ in $\bbZ_N$, and thus by \eqref{paj5} 
\begin{align}\label{paj7}
  \gam_j=\gam_i+g(j)-b
\qquad\text{in $\bbZ_N$ when $P_{ij}>0$}.
\end{align}


This leads to the following characterizations of \eqref{em4}.

\begin{proposition}\label{PL}
  Let $(W_k)$ be an irreducible and aperiodic Markov chain on a finite state
  space $\cW$, and let $g:\cW\to\bbZ$.
Then, with notation as in \refSs{SMarkov} and \ref{SpfT1}, 
the following are equivalent:
\begin{romenumerate}
\item \label{PL1}
The condition \eqref{em4} does \emph{not} hold.
\item \label{PL2}
$P(e^{\ii t})$ has an eigenvalue $\gl$ with $|\gl|=1$ for some $t\in(0,\pi]$.
\item \label{PL3}
The spectral radius
$\rho\bigpar{P(e^{\ii t})}=1$ for some $t\in(0,\pi]$.
\item \label{PL4}
There exist an integer $N\ge2$, an integer $b$, and integers 
$\gam_i$, $i\in\cW$, such that for every pair $i,j\in\cW$ with $P_{ij}>0$,
\begin{align}\label{pl4}
  \gam_j\equiv \gam_i+g(j)-b \pmod N.
\end{align}
\item \label{PL5}
There exists an integer $N\ge2$, an integer $b$,
and a partition of $\cW$ into
(possibly empty)
sets $\cW_k$, $k=1,\dots,N$, such that if $i\in\cW_k$ and $P_{ij}>0$, then
$j \in \cW_{k+g(j)-b}$, with the index regarded modulo $N$.
\end{romenumerate}
\end{proposition}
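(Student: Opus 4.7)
The plan is to establish the cycle \ref{PL1} $\Rightarrow$ \ref{PL4} $\Rightarrow$ \ref{PL2} $\Rightarrow$ \ref{PL1}, together with the trivial reformulations \ref{PL4} $\Leftrightarrow$ \ref{PL5} and \ref{PL2} $\Leftrightarrow$ \ref{PL3}. The equivalence \ref{PL4} $\Leftrightarrow$ \ref{PL5} is purely cosmetic: given $\gam_i$ satisfying \eqref{pl4}, partition $\cW$ by residue classes $\cW_k := \{i \in \cW : \gam_i \equiv k \pmod N\}$; conversely, set $\gam_i := k$ for $i \in \cW_k$. The equivalence \ref{PL2} $\Leftrightarrow$ \ref{PL3} is also immediate, since the triangle-inequality argument \eqref{er3}--\eqref{er4} in the proof of \refL{L5} shows every eigenvalue $\gl$ of $P(e^{\ii t})$ satisfies $|\gl| \le 1$, whence $\rho(P(e^{\ii t})) \le 1$ always, with equality iff some eigenvalue has modulus exactly one.

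For the implication \ref{PL1} $\Rightarrow$ \ref{PL4}, I would simply read off the construction from the group-theoretic bookkeeping already performed in the paragraphs preceding the proposition. When \eqref{em4} fails, the quotient $G = \bbZ^2 / L$ is a nontrivial cyclic group, identified with $\bbZ_N$ for some $N \in \{2,3,\ldots\} \cup \{\infty\}$, with some $(b,1) \in L$ by surjectivity of $\Pi_2$. Fixing a base point $o \in \cW$ and choosing for each $i$ a path $\cQ_i$ from $o$ to $i$, define $\gam_i$ as the sum of $g(j) - b$ along the edges of $\cQ_i$; this is well defined in $\bbZ_N$ because sums along closed paths lie in $L$, and the relation \eqref{paj7} is exactly \eqref{pl4}. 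In the case $N = \infty$ the $\gam_i$ live in $\bbZ$ and may be reduced modulo any chosen integer $\ge 2$.

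For \ref{PL4} $\Rightarrow$ \ref{PL2}, I will construct an explicit eigenvector. Take $t := 2\pi/N \in (0,\pi]$ and $\omega := e^{-\ii t}$, so $\omega^N = 1$, and define $v \in \bbC^\cW$ by $v_j := \omega^{\gam_j}$. Using \eqref{pl4} and $\omega^N = 1$, I compute
\begin{align*}
(P(e^{\ii t}) v)_i
&= \sum_{j} P_{ij}\, e^{\ii t g(j)} \omega^{\gam_j}
 = \sum_{j} P_{ij}\, \omega^{\gam_j - g(j)} \\
&= \omega^{\gam_i - b} \sum_{j} P_{ij}
 = \omega^{-b} v_i,
\end{align*}
so $\omega^{-b}$ is an eigenvalue of $P(e^{\ii t})$ of modulus one. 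To close the cycle, for \ref{PL2} $\Rightarrow$ \ref{PL1}, suppose $P(e^{\ii t})$ has an eigenvalue $\gl$ with $|\gl| = 1$ at some $t \in (0,\pi]$. The equality-in-triangle-inequality step from the proof of \refL{L5} gives \eqref{er8}: $e^{\ii g(\cQ) t} \gl^{-\ell(\cQ)} = 1$ for every closed path $\cQ$. Writing $\gl = e^{\ii \phi}$, the character $\chi(x,y) := e^{\ii(xt - y\phi)}$ on $\bbZ^2$ is then trivial on the subgroup $L$, whereas $\chi(1,0) = e^{\ii t} \ne 1$ because $t \in (0,\pi]$; therefore $L$ is a proper subgroup of $\bbZ^2$, exactly the failure of \eqref{em4}.

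The only delicate point is treating the $N = \infty$ case in \ref{PL1} $\Rightarrow$ \ref{PL4}, but since the preceding discussion has already carried out the cyclic-group bookkeeping and established the existence of $(b,1) \in L$, the remaining work is purely notational. The main conceptual content of the proof is the explicit construction $v_j = \omega^{\gam_j}$ in \ref{PL4} $\Rightarrow$ \ref{PL2}, which nicely mirrors (and inverts) the equality-case analysis from \refL{L5} used in the converse direction.
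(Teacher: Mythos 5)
Your proof is correct and takes essentially the same approach as the paper: the same cycle \ref{PL1}$\Rightarrow$\ref{PL4}$\Rightarrow$\ref{PL2}$\Rightarrow$\ref{PL1} (the paper closes the cycle through \ref{PL3} by directly invoking the contrapositive of \refL{L5}, while you re-derive it via \eqref{er8} and a character argument, but the content is identical), the same explicit eigenvector $v_j=e^{-\ii t\gam_j}$ for \ref{PL4}$\Rightarrow$\ref{PL2}, the same use of \eqref{paj7} and the $N=\infty$ remark for \ref{PL1}$\Rightarrow$\ref{PL4}, and the same trivial equivalences \ref{PL4}$\iff$\ref{PL5} and \ref{PL2}$\iff$\ref{PL3}.
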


\begin{proof}
First, 
the proof of \refL{L5} shows that every eigenvalue $\gl$ of $P(e^{\ii t})$
satisfies $|\gl|\le1$, and thus
\ref{PL2}$\iff$\ref{PL3} by the definition of spectral radius.

Furthermore, \ref{PL4}$\iff$\ref{PL5}, since if \ref{PL4} holds, we may
define $\cW_k:=\set{i:\gam_i\equiv k \pmod N}$,
and conversely we may define $\gam_i=k$ for $i\in\cW_k$.

\refL{L5} shows that if \eqref{em4} holds, then \ref{PL3} does not hold; thus
\ref{PL3}$\implies$\ref{PL1}.

Now suppose that \ref{PL1} holds. This means that in the discussion above, 
$N>1$. If $N<\infty$, then \eqref{paj7} shows the existence of $b$ and
$\gam_i$ such that \eqref{pl4} holds.
If $N=\infty$, then \eqref{pl4} holds in $\bbZ$, and it thus holds for any
integer $N\ge2$.
This shows \ref{PL1}$\implies$\ref{PL4}.

Finally, suppose that \ref{PL4} holds.
Let $t=2\pi/N$ and $\go:=e^{\ii t}=e^{2\pi\ii/N}$, and let $v_i:=\go^{-\gam_i}$.
Then, by \eqref{jp6} and \eqref{pl4} if $P_{ij}>0$, 
and trivially if $P_{ij}=0$,
\begin{align}
  P(\go)_{ij}v_j = P_{ij}\go^{g(j)-\gam_j}
=P_{ij}\go^{b-g(i)}
=P_{ij}\go^bv_i,
\end{align}
and by summing over $j$ we see that $v:=(v_i)_i$ is an eigenvector 
of $P(\go)$ with
eigenvalue $\go^b$. Hence \ref{PL2} holds.
\end{proof}

\begin{example}\label{ENagaev}
Nagaev \cite[Condition C]{Nagaev1961}
makes the (rather strong)
assumption that for every pair $i,j\in\cW$, there exists $k\in\cW$
such that $P_{ik},P_{jk}>0$.
This in combination with \eqref{pl4} implies that 
\begin{align}\label{na1}
  \gam_i \equiv \gam_{k}-g(k)+b\equiv \gam_j \pmod N.
\end{align}
Thus, for every $j$, by choosing $i$ such that $P_{ij}>0$ and applying
\eqref{pl4} again,
\begin{align}\label{na2}
  g(j)\equiv b \pmod N,
\qquad j\in\cW.
\end{align}
Since \cite[Theorem 3]{Nagaev1961} also assumes (in our notation)
that \eqref{na2} does not hold for any $N\ge2$, 
we see that the assumptions in \cite{Nagaev1961} imply that
\refProp{PL}\ref{PL4} cannot hold, 
and thus \refProp{PL} shows that \eqref{em4} holds. 
\end{example}

\subsection{The asymptotic variance $\gss=0$}\label{SSgss0}

Another exceptional case when \refT{TAB} fails is when the asymptotic
variance $\gss=0$. 
This is a very degenerate  case; as shown in \refProp{PO} below, it
happens only if, after subtracting a suitable constant $b$ from $g$, the
sums $S_n$ are deterministically bounded.
We will also see that it cannot happen when \eqref{em4} holds.

\begin{proposition}\label{PO}
  Let $(W_k)$ be an irreducible and aperiodic Markov chain on a finite state
  space $\cW$, and let $g:\cW\to\bbZ$.
Then, with notation as in \refSs{SMarkov} and \ref{SSem4}, 
the following are equivalent:
\begin{romenumerate}
  
\item \label{PO1}
$\gss=0$.
\item \label{PO2}
There exists an integer (or real number) $b$ such that deterministically
\begin{align}\label{po2}
  |S_n-bn| \le C
\end{align}
for some constant $C$, uniformly in $n$.
\item \label{PO3}
There exists an integer (or real number) $b$ such that 
\begin{align}\label{po3}
  g(\cQ)=b\ell(\cQ)
\end{align}
for every closed path $\cQ$.
\item \label{PO4}
There exist an integer $b$ and integers 
$\gam_i$, $i\in\cW$, such that for every pair $i,j\in\cW$ with $P_{ij}>0$,
\begin{align}\label{po4}
  \gam_j= \gam_i+g(j)-b.
\end{align}
\item \label{PO5}
There exists an integer $b$ such that 
\begin{align}\label{po5}
  L=\bbZ(b,1) =\set{(yb,y):y\in\bbZ}.
\end{align}
\item \label{PO6}
The lattice $L$ is one-dimensional.
\item \label{PO7}
$L_1=\set{0}$.
\item \label{PO8}
$N=\infty$.
\item \label{PO9}
$G=\bbZ$.
\end{romenumerate}
Furthermore,
the constant $b$ in \ref{PO2}, \ref{PO3}, \ref{PO4}, and \ref{PO5}
is the same integer.
\end{proposition}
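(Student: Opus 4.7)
The plan is to first dispatch the algebraic equivalences among (iii)--(ix), then handle the easy chain (iv)$\Rightarrow$(ii)$\Rightarrow$(i), and finally prove the only nontrivial direction (i)$\Rightarrow$(iii) via a Poisson-equation martingale decomposition. This closes the cycle (i)$\Rightarrow$(iii)$\Rightarrow$(iv)$\Rightarrow$(ii)$\Rightarrow$(i), with the algebraic block giving (iii)$\Leftrightarrow$(iv)$\Leftrightarrow$(v)$\Leftrightarrow$(vi)$\Leftrightarrow$(vii)$\Leftrightarrow$(viii)$\Leftrightarrow$(ix).

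Conditions (v)--(ix) are just reformulations of one another via the lattice analysis of \refSS{SSem4}: $L_1=\set 0 \iff N=\infty \iff G\cong\bbZ$, and in this case \eqref{pj3} reduces $L$ to the one-dimensional lattice $\bbZ(b,1)$, which is the same as (vi). The equivalence (v)$\iff$(iii) is immediate from the definition of $L$ as the subgroup generated by $\set{(g(\cQ),\ell(\cQ)):\cQ\text{ closed}}$: every generator lies in $\bbZ(b,1)$ iff $g(\cQ)=b\ell(\cQ)$ for every closed $\cQ$. For (iii)$\iff$(iv), fix any $o\in\cW$; assuming (iii), for each $i\in\cW$ choose a path $\cR_i:o\to i$ and set $\gamma_i:=g(\cR_i)-b\ell(\cR_i)$. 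Irreducibility provides a return path $i\to o$; concatenating two candidate paths $\cR_i$ with the same return produces two closed paths at $o$ on which (iii) forces the same value of $g-b\ell$, so $\gamma_i$ is well defined. If $P_{ij}>0$, extending $\cR_i$ by the edge $(i,j)$ yields \eqref{po4}. Conversely, summing \eqref{po4} around any closed path telescopes to $0=g(\cQ)-b\ell(\cQ)$.

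The easy probabilistic steps are then one line each. (iv)$\Rightarrow$(ii): along any realization, $g(W_k)-b=\gamma_{W_k}-\gamma_{W_{k-1}}$, so $S_n-nb=\gamma_{W_n}-\gamma_{W_0}$ is deterministically bounded by $2\max_i|\gamma_i|$. (ii)$\Rightarrow$(i): $|S_n-bn|\le C$ gives $\Var(S_n)\le(2C)^2$, hence $\gss=\lim_n n\qw\Var(S_n)=0$.

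The main obstacle is (i)$\Rightarrow$(iii). I plan to use the Poisson equation: since $\pi\tr(g-\mu\one)=0$ by definition of $\mu$, the Fredholm alternative in finite dimensions produces a solution $h:\cW\to\bbR$ of $(I-P)h=g-\mu\one$ (for instance $h=Qg$ with $Q=(I-P)\gx$). Rewriting this identity at $i=W_k$ yields
\begin{align*}
g(W_k)-\mu = h(W_k)-\E[h(W_{k+1})\mid W_k] = h(W_k)-h(W_{k+1})+M_{k+1},
\end{align*}
where $M_{k+1}:=h(W_{k+1})-\E[h(W_{k+1})\mid W_k]$ is a martingale difference with respect to the natural filtration. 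Summing from $k=1$ to $n$,
\begin{align*}
S_n-n\mu = h(W_1)-h(W_{n+1})+\sum_{k=2}^{n+1}M_k,
\end{align*}
and under stationarity the $M_k$ are orthogonal with common variance $\gs_M^2$, so $\Var(S_n)=n\gs_M^2+O(1)$; hence $\gss=\gs_M^2$. Thus $\gss=0$ forces $M_k\equiv 0$ almost surely, i.e.\ $h(W_{k+1})$ is deterministic given $W_k$. Equivalently, for every edge with $P_{ij}>0$ we have $h(j)=(Ph)(i)=h(i)-g(i)+\mu$. Summing this cocycle along any closed path $i_0,i_1,\dotsc,i_\ell=i_0$, the $h$-values telescope to $0$ on the left, yielding $\sum_{k=0}^{\ell-1}g(i_k)=\ell\mu$; since $i_\ell=i_0$, this is the same as $g(\cQ)=\mu\ell(\cQ)$, which is (iii) with $b=\mu$. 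Finally, the aperiodicity \eqref{em1} furnishes integers $a_j$ and closed paths $\cQ_j$ with $\sum_j a_j\ell(\cQ_j)=1$, so $\mu=\sum_j a_jg(\cQ_j)\in\bbZ$; the constants $b$ appearing in (ii), (iv), and (v) must all equal this $\mu$, forced by $S_n/n\to b$ for (ii), by the construction of $\gamma$ for (iv), and by the generator of $L$ for (v).
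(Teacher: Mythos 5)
Your proof is correct, and the decisive step (i)$\Rightarrow$(iii) is handled by a genuinely different method from the paper's. The paper argues by contrapositive: assuming (iii) fails, it constructs two closed paths of equal length from the same vertex with different values of $g$, sets up a renewal sequence of stopping times, and invokes the renewal-theoretic CLT with moment convergence (Lemma~\ref{LR}) to force $\gss>0$. You instead solve the Poisson equation $(I-P)h=g-\mu\one$ (available by the Fredholm alternative since $\pi\tr(g-\mu\one)=0$), write $S_n-n\mu$ as a bounded telescoping term plus a stationary martingale $\sum M_k$, identify $\gss$ with $\Var(M_k)$, and observe that $\gss=0$ forces the martingale increments to vanish a.s., which is exactly the deterministic cocycle relation $h(j)=(Ph)(i)$ along every edge; telescoping around a closed path then gives (iii) with $b=\mu$. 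This is essentially the coboundary characterization that the paper itself invokes from~\cite{SJ286} in the proof of Theorem~\ref{Tgss0}, so your route unifies the two proofs nicely and avoids the heavier renewal machinery (Lemma~\ref{LR}) entirely. Two small points worth tightening: you need to say a word about why $\Var(S_n)=n\gs_M^2+O(\cdot)$ despite the boundary term $h(W_1)-h(W_{n+1})$ being correlated with the martingale sum (Cauchy--Schwarz gives $O(\sqrt n)$, which is enough, or exponential mixing gives $O(1)$); and your telescoping in (iv)$\Rightarrow$(ii) should start from $W_1$ rather than $W_0$ (the chain in Theorem~\ref{T1} is indexed from $k=1$), giving $S_n-nb=(g(W_1)-b)+\gamma_{W_n}-\gamma_{W_1}$, which is still deterministically bounded. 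The remaining equivalences among (iii)--(ix) and the step (iv)$\Rightarrow$(ii)$\Rightarrow$(i) coincide with the paper's argument.
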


\begin{proof}
The equivalences 
\ref{PO6}$\iff$\ref{PO5}$\iff$\ref{PO7}$\iff$\ref{PO8}$\iff$\ref{PO9}$\iff
$\ref{PO3}
follow from the discussion at the beginning of \refSS{SSem4}, in
particular \eqref{pj3} and the text leading to \eqref{paj4}.
Note that if \eqref{po3} holds for some real number $b$, then
$b\ell(\cQ)\in\bbZ$ for every closed path $\cQ$, and thus
$bx\in\bbZ$ for every $x$ in the group $L_2$ generated by the set of all such
$\ell(\cQ)$. As noted above,
the aperiodicity \eqref{em1} implies $L_2= \bbZ$, and thus $b\in\bbZ$,
so $b$ in \eqref{po3} has to be an integer.

Furthermore, \ref{PO8}$\implies$\ref{PO4} follows by \eqref{paj7} (with
$N=\infty$).

\ref{PO4}$\implies$\ref{PO2}:
It follows from \eqref{po4} that
\begin{align}\label{bj1}
  S_n-bn& 
=\sumkn(g(W_k)-b)
=g(W_1)-b+\sum_{k=2}^n(\gam_{W_{k}}-\gam_{W_{k-1}})
\notag\\&
=g(W_1)-b+\gam_{W_{n}}-\gam_{W_{1}}
\end{align}
and thus \eqref{po2} holds with 
$C=\max_{i\in\cW}|g(i)-b|+2\max_{j\in\cW}|\gam_j|$.
(The same argument yields \ref{PO4}$\implies$\ref{PO3} directly, but we do
not need this.)

\ref{PO2}$\implies$\ref{PO1}:
Obvious by the definition \eqref{t1kk2}.

\ref{PO1}$\implies$\ref{PO3}:
Suppose that \ref{PO3} does not hold. Then there exist two closed paths
$\cQ_1$ and $\cQ_2$ such that
\begin{align}\label{sk1}
g(\cQ_1)/\ell(\cQ_1)\neq g(\cQ_2)/\ell(\cQ_2).  
\end{align}
Let $w_i$ be the starting point of $\cQ_i$. If $w_2\neq w_1$, choose two
paths 
$\cQ_{12}$ and $\cQ_{21}$ from $w_1$ to $w_2$ and from $w_2$ to $w_1$,
respectively. We may then construct the closed paths $\cQ':=\cQ_{12}+\cQ_{21}$
and $\cQ'':=\cQ_{12}+\cQ_2+\cQ_{21}$, both starting at $w_1$, 
by concatenation in the obvious way.
Then
\begin{align}
  g(\cQ'')=g(\cQ_{12})+g(\cQ_2)+g(\cQ_{21})=g(\cQ')+g(\cQ_2)
\end{align}
and similarly $\ell(\cQ'')=\ell(\cQ')+\ell(\cQ_2)$;
hence it follows from \eqref{sk1} that we cannot have
$g(\cQ')/\ell(\cQ')= g(\cQ_1)/\ell(\cQ_1)= g(\cQ'')/\ell(\cQ'')$. 
Consequently, 
by relabelling either $\cQ'$ or $\cQ''$ as $\cQ_2$, we may assume that
\eqref{sk1} holds with the same starting point $w_1$ for both closed paths.

Next, let $\ell_i:=\ell(\cQ_i)\ge1$. Replace $\cQ_i$ by the closed path
$\ell_{3-i}\cQ_i$, i.e., the closed path $\cQ_i$ repeated $\ell_{3-i}$ times.
This replacement does not change $g(\cQ_i)/\ell(\cQ_i)$, and thus
\eqref{sk1} still holds, but now both closed paths have the same length
$\ell_1\ell_2$. 

We may thus assume that 
\eqref{sk1} holds for two closed paths
$\cQ_1$ and $\cQ_2$  
of the same length $\ell\ge1$ and starting from the same point $w_1\in\cW$;
note that this implies $g(\cQ_1)\neq g(\cQ_2)$.
We now fix these paths, and thus $\ell$, 
and define a sequence of stopping times
for the Markov chain by
\begin{align}\label{ck1}
  T_0&:=\min\set{n\ge1:W_n=w_1},
\\\label{ck2}
T_k&:=\min\set{n\ge T_{k-1}+\ell:W_n=w_1},\qquad k\ge1.
\end{align}
Thus, at $T_k$ we return to $w_1$, but we only consider returns after a time
of at least $\ell$.

Since the Markov chain is finite and irreducible, almost surely all $T_k$
are finite. The Markov property shows that the times $T_k$ are renewal times
in the sense that the process starts again at each $T_k$.
Let $\tau_k:=T_k-T_{k-1}$, the waiting time of the $k$:th
``renewal''; then $\tau_1$, $\tau_2$, \dots are i.i.d.\ random variables.
Using the fact that
there exists some $t_0$ such that from every point in $\cW$ we may 
with positive probability reach $w_1$ within $t_0$ steps, 
it is easily seen that the tail probability $\P(\tau_1>t)$ decreases
exponentially as $t\to\infty$; in particular, 
the moment $\E \tau_1^m$ is finite
for every $m\ge1$,
Similarly, $\E T_0^m<\infty$.

Consider the two-dimensional process $(n,S_n)$ and its
``increments'' between the renewal times $T_k$ defined by
\begin{align}\label{sk2}
  \zeta_k:=
\bigpar{T_k-T_{k-1},\set{S_{T_{k-1}+i}-S_{T_{k-1}}:0\le i\le T_k-T_{k-1}}},
\qquad k\ge1,
\end{align}
where the second component thus is a random process defined on an
integer interval of random length. 
The Markov property shows that these increments are i.i.d.
We may thus apply the general result from renewal theory stated in \refL{LR}
below and conclude that, with the notation in \eqref{sk3}--\eqref{sk5},
\eqref{sk6} holds with convergence of all moments. In particular,
\begin{align}\label{sk7}
\frac{\Var[S_n]}{n}=\Var\Bigpar{ \frac{S_n-an}{\sqrt{n}}}
\to \xgss/\xmu
\end{align}
and thus,
recalling \eqref{t1kk2},
\begin{align}\label{sk8}
  \gss=\xgss/\xmu.
\end{align}
Now recall the closed paths $\cQ_1$ and $\cQ_2$ of the same length
$\ell$ constructed above. For each $i=1,2$, with positive probability
the Markov chain starts in $w_1$ and then follows $\cQ_i$ for the next
$\ell$ steps; in this case $T_0=1$ and $T_1=\ell+1$   
so $\tau_1=\ell$, and also $S_{T_1}-S_{T_0}=g(\cQ_i)$.
Consequently, $S_{T_1}-S_{T_0}-aT_1$ takes two different values $g(\cQ_i)-a\ell$
with positive probability, and thus $\xgss>0$ by \eqref{sk5} and
consequently $\gss>0$ by \eqref{sk8}, so \ref{PO1} does not hold.
This completes the proof of \ref{PO1}$\implies$\ref{PO3}, which completes
the chain of equivalences.
\end{proof}

We used above the following  renewal theoretic lemma,
stated here for our Markov chain and $S_n$.
The lemma follows easily from known general results in renewal theory;
we find it convenient to use a version from \cite{SJ376}
that uses a formulation from \cite{Serfozo}.
The powerful idea to analyze Markov chains
(also with an infinite state space)
by a suitable renewal sequence of stopping times is old, and was for example
used 
by \cite{Hipp}.

\begin{lemma}\label{LR}
Let $(T_k)_{k=0}^\infty$ be an increasing sequence of (a.s.\ finite)
random times 
and let $\tau_k:=T_k-T_{k-1}$, $k\ge1$.
Assume that
the increments $\zeta_k$ defined in \eqref{sk2} for $k\ge1$ are 
i.i.d.
(Thus, in particular, the increments $\tau_k$ are i.i.d.)
Assume also that for every $r\ge1$, 
$\E T_0^r<\infty$ and $\E \tau_1^r<\infty$.
Let
\begin{align}\label{sk3}
  a&:=\E[S_{T_1}-S_{T_0}]/\E[\tau_1],
\\\label{sk4}
\xmu&:=\E[\tau_1],
\\\label{sk5}
\xgss&:=\Var[S_{T_1}-S_{T_0}-a\tau_1]
.\end{align}
Then, as \ntoo,
\begin{align}\label{sk6}
\frac{S_n-an}{\sqrt{n}}\dto N\bigpar{0,\xgss/\xmu}
\end{align}
with convergence of all moments.
\end{lemma}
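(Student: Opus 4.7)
The plan is to realize $S_n$ as a random sum over the renewal epochs $(T_k)$ and then apply a standard Anscombe-type random CLT, with moment convergence handled by the assumed finiteness of all moments of $\tau_1$ and $T_0$.

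Define the counting process $N(n):=\max\set{k\ge0:T_k\le n}$ and the centered block sums
\begin{align*}
Y_k:=\bigpar{S_{T_k}-S_{T_{k-1}}}-a\tau_k,\qquad k\ge1.
\end{align*}
By the definition \eqref{sk3} of $a$, the random variables $Y_k$ are i.i.d.\ with $\E Y_k=0$ and $\Var(Y_k)=\xgss$. Writing
\begin{align*}
S_n-an
=\sum_{j=1}^{N(n)}Y_j+\bigpar{S_{T_0}-aT_0}+\bigpar{S_n-S_{T_{N(n)}}}+a\bigpar{T_{N(n)}-n},
\end{align*}
the first step would be to argue that only the main term $\sum_{j\le N(n)}Y_j$ contributes to the limit, after dividing by $\sqrt n$. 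The boundary terms are each controlled by a single inter-renewal block: they are bounded in absolute value by a constant multiple of $T_0+\tau_{N(n)+1}+\max_{T_{N(n)}\le i\le T_{N(n)+1}}|S_i-S_{T_{N(n)}}|$, all of which are $o_p(\sqrt n)$ (and whose moments are uniformly controlled by the assumed moment bounds on a single block together with standard renewal estimates for $\E\tau_{N(n)+1}^r$).

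The second step is to handle the main term. The strong law of large numbers applied to $T_k=T_0+\sum_{j=1}^k\tau_j$ gives $T_k/k\asto\xmu$, hence $N(n)/n\asto 1/\xmu$. By Anscombe's theorem (or equivalently Wald's identity plus a Kolmogorov maximal inequality argument on $\sum Y_j$), this deterministic-scaling-with-random-index replacement yields
\begin{align*}
\frac{1}{\sqrt n}\sum_{j=1}^{N(n)}Y_j \dto N\bigpar{0,\xgss/\xmu},
\end{align*}
which combined with the first step gives the distributional convergence \eqref{sk6}.

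The third, and main, step is to upgrade to convergence of all moments. Here I would check that $\set{((S_n-an)/\sqrt n)^r}_{n\ge1}$ is uniformly integrable for every $r\ge1$, which by standard arguments reduces to uniformly bounding $\E|(S_n-an)/\sqrt n|^{r+1}$. Both the main sum and the boundary blocks are handled by Burkholder/Marcinkiewicz--Zygmund type inequalities for the i.i.d.\ sequence $Y_j$ together with the assumption $\E\tau_1^r<\infty$ for all $r$: in particular $\E \bigpar{\max_{k\le N(n)+1}\tau_k}^r=O(1)$ for any fixed $r$ (by union bound and the moment assumption), so the boundary contribution has bounded $r$-th moment after dividing by $\sqrt n$, while $\E|\sum_{j\le N(n)}Y_j|^r\le C_r n^{r/2}$ follows from $\E N(n)^{r/2}=O(n^{r/2})$ and the i.i.d.\ block moment estimate. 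This is precisely the form of the general result collected in \cite{SJ376}, phrased via \cite{Serfozo}, and one could simply cite that reference rather than redoing the argument in detail. The main obstacle is not the CLT itself but organising the tail/moment estimates on the end-effect $a(T_{N(n)}-n)+(S_n-S_{T_{N(n)}})$ so that uniform integrability of arbitrary powers propagates through; this is where the assumption of finite moments of all orders for $\tau_1$ is essential.
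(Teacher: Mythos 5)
Your proof takes essentially the same route as the paper: both reduce to a renewal CLT with moment convergence and observe that this is exactly what \cite[Theorems 1.4 and 3.1]{SJ376} (phrased via \cite{Serfozo}) delivers, with the paper simply shifting the time origin by $T_0$ first (writing $S_n':=S_{T_0+n}-S_{T_0}$, applying \cite{SJ376} to $S_n'$, and then using $|S_n-S_n'|\le CT_0$ with all moments of $T_0$ finite to transfer the moment convergence) rather than carrying the $T_0$ boundary term explicitly in the decomposition. One small inaccuracy: $\E\bigpar{\max_{k\le N(n)+1}\tau_k}^r$ is not $O(1)$ in general---with geometric tails it grows like $(\log n)^r$---but this is harmless since any bound that is $o(n^{r/2})$ suffices for the boundary contribution after dividing by $\sqrt n$.
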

\begin{proof}
Consider first the process only from time $T_0$.
Formally we
define
  \begin{align}
    S_n':=S_{T_0+n}-S_{T_0}
  \end{align}
and note that $(S_n')_{n=0}^\infty$  is a stochastic process that starts at
time $n=0$ with $S'_0=0$;
define also $T_n':=T_n-T_0$.
Then the increments $\zeta_k$ in \eqref{sk2} are the same for $(S_n')$ and
$(T_k')$ as for $(S_n)$ and $(T_k)$. By assumption these are i.i.d.;
this means that
in the terminology of \cite{Serfozo} and \cite{SJ376}, 
the stochastic process $S_n'$ has regenerative increments over the
times  $T'_k$. 

Since the state space $\cW$ is finite, the function $g$ is
bounded, and thus $|S'_{T_1}|\le C (T_1-T_0)=C\tau_1$;
consequently the moment $\E|S'_{T_1}|^r$ is finite
for every $r\ge1$. Moreover, if $M_1:=\sup_{1\le n\le T'_1}|S'_n|$
then $\E M_1^r<\infty$ by the same argument.

We may now apply \cite[Theorems 1.4 and 3.1]{SJ376} and conclude that
\begin{align}\label{sk6'}
\frac{ S'_n-an}{\sqrt{n}}
\dto N\bigpar{0,\xgss/\xmu}
\end{align}
with convergence of all moments.

Finally, 
again since $g$ is bounded,
$|S_{T_0+n}-S_n|\le C T_0$ and $|S_{T_0}|\le C T_0$,
and consequently
\begin{align}\label{ck3}
 | S_n-S_n'|\le |S_{T_0+n}-S_n|+|S_{T_0}|\le C T_0.
\end{align}
It follows that, as \ntoo,
\begin{align}\label{ck4}
\frac{S_n-S_n'}{\sqrt n}\to0
\end{align}
in probability,
which together with \eqref{sk6'} implies \eqref{sk6}. 
Moreover,
since $\E T_0^p<\infty$ for every $p\ge1$, it follows from \eqref{ck3}
that \eqref{ck4} holds
in $L^p$ for every $p\ge1$, which together with the moment convergence in
\eqref{sk6'} implies that all moments converge in \eqref{sk6} too.
\end{proof}

\begin{corollary}\label{CO}
  Let $(W_k)$ be an irreducible and aperiodic Markov chain on a finite state
  space $\cW$, and let $g:\cW\to\bbZ$.
Then, with notation as in \refSs{SMarkov},
if \eqref{em4} holds, then $\gss>0$.
\end{corollary}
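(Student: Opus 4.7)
The plan is to derive \refC{CO} as a direct consequence of \refProp{PO}, by contraposition. Since $\gss = \lim_n n^{-1}\Var(S_n)$ is a limit of nonnegative quantities, it is automatic that $\gss \ge 0$; hence it suffices to rule out $\gss = 0$ under the assumption \eqref{em4}.

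So I would assume for contradiction that $\gss = 0$, and use the equivalence \ref{PO1}$\iff$\ref{PO3} of \refProp{PO} to produce an integer $b$ such that $g(\cQ) = b\ell(\cQ)$ for every closed path $\cQ$. This means that every generator $(g(\cQ),\ell(\cQ))$ of the group in \eqref{em4} lies in the one-dimensional sublattice $\bbZ(b,1) \subset \bbZ^2$, so the group generated is contained in $\bbZ(b,1)$ and is in particular a proper subgroup of $\bbZ^2$. This directly contradicts \eqref{em4}, which asserts that these generators span all of $\bbZ^2$.

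Equivalently, one may phrase the argument through the quotient $G = \bbZ^2/L$ analyzed in \refSS{SSem4}: the condition \eqref{em4} says $N = |G| = 1$, while by the equivalence \ref{PO1}$\iff$\ref{PO8} of \refProp{PO}, $\gss = 0$ forces $N = \infty$. These are incompatible, so \eqref{em4} implies $\gss \ne 0$, hence $\gss > 0$.

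There is no real obstacle here, since all the substantive content has already been established in \refProp{PO} (in particular the nontrivial direction \ref{PO1}$\implies$\ref{PO3}, which was proved via the renewal-theoretic \refL{LR}). The corollary is merely a packaging of that equivalence in the form needed to close the gap left in the proof of \refT{T1}.
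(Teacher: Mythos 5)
Your argument is correct and matches the paper's proof exactly: the paper likewise invokes the equivalence \ref{PO1}$\iff$\ref{PO3} of \refProp{PO} to observe that $\gss=0$ forces $g(\cQ)=b\ell(\cQ)$ for all closed paths $\cQ$, which places the generators in a one-dimensional sublattice and contradicts \eqref{em4}.
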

\begin{proof}
  \refProp{PO} shows that if $\gss=0$, then \eqref{po3} holds and thus
  \eqref{em4} does not hold.
\end{proof}

This corollary completes the proof of \refT{T1}.

\section{Back to Litt's game}\label{SLitt2}

We consider again Litt's game.
We begin by showing that the only cases where $\gss=0$ are the rather
trivial cases in \refE{Egss2} (including \refE{Egss1}).
This (in the form \eqref{qk1} below)
is stated in \citet{Basdevant} without proof; since we also do not know of a
proof given elsewhere, we give a complete proof.

\begin{theorem}\label{Tgss0}
  For Litt's game in \refS{SAB}, we have $\gss>0$ except in the case
$A=\sH\sT^{\ell-1}$ and $B=\sT^{\ell-1}\sH$ (with $q=2$ and some $\ell\ge2$)
and its variants obtained by interchanging Alice and Bob 
or $\sH$ and $\sT$ (or  both).
\end{theorem}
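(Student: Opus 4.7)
The plan is to apply \refProp{PO}, which characterizes $\gss=0$ by the existence of an integer $b$ such that $g(\cQ)=b\,\ell(\cQ)$ for every closed path $\cQ$ in the Markov chain on $\cA^\ell$ (with $g=I_A-I_B$). The task then reduces to classifying distinct pairs $A,B\in\cA^\ell$ admitting such a $b$.

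The first step is to pin down $b$. Each constant loop $c^\ell\to c^\ell$ (of length $1$) forces $b=\indic{A=c^\ell}-\indic{B=c^\ell}$. When $q\ge 3$ some letter $c$ satisfies $c^\ell\notin\set{A,B}$, forcing $b=0$. When $q=2$, the two constant loops at $\sH^\ell$ and $\sT^\ell$ together impose two conditions on $b$: if either $A$ or $B$ is a constant string these conflict, violating \refProp{PO} and thus giving $\gss>0$ (consistent with \refE{EHH-TT}). Since the constant-string cases are disjoint from the excluded family in the theorem, we may henceforth assume $b=0$ and that neither $A$ nor $B$ is constant. Under these hypotheses $\gss=0$ is equivalent to the statement that every cyclic word over $\cA$ contains equal numbers of cyclic $\ell$-substring occurrences of $A$ and of $B$. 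The ``easy'' direction---that the excluded pair $A=\sH\sT^{\ell-1}$, $B=\sT^{\ell-1}\sH$ and its three variants satisfy this---follows from the observation that in any cyclic word over $\set{\sH,\sT}$ both counts equal the number of maximal $\sT$-runs of length $\ge\ell-1$: each such run is cyclically bounded by $\sH$'s on both sides, contributing one $A$-occurrence (the $\sH$ just before it) and one $B$-occurrence (the $\sH$ just after it).

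The hard direction is the converse. Considering the cyclic word $A$ itself already forces $B$ to be a cyclic conjugate of $A$, since the $A$-count there is positive and must equal the $B$-count. Let $s\in\set{1,\dots,\ell-1}$ denote the shift. I would then probe with families of cyclic words of controlled structure---e.g.\ $cA$, $cAd$, $A^mc$, or $c^mA$ for varying letters $c,d$ and integers $m$---and extract from each test the constraint that the $A$- and $B$-counts agree; taken together, these constraints force $q=2$, $s\in\set{1,\ell-1}$, and $A$ to have exactly one letter of minority type, yielding precisely the excluded family. A cleaner alternative is to lift the problem to the de Bruijn graph on $\cA^{\ell-1}$ (whose vertices are words of length $\ell-1$ and whose edges are words of length $\ell$): by arguing as in the proof of \refProp{PO}, the $b=0$ case is equivalent to the existence of a height function $\phi\colon\cA^{\ell-1}\to\bbZ$ satisfying $\phi(u_2\cdots u_\ell)-\phi(u_1\cdots u_{\ell-1})=\indic{u=A}-\indic{u=B}$ for every $u\in\cA^\ell$; the rigidity that $\phi$ is constant along any long plateau in this graph and may change only at the two exceptional edges $A$ and $B$ reduces the classification to a finite case check. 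This combinatorial case analysis is the main obstacle of the proof: no deep new idea is needed, but the details require patience.
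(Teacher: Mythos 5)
Your framework is sound and essentially matches the paper's: Proposition~\ref{PO}\,(iii)--(iv) (equivalently the paper's height-function citation of \cite[Theorem 2]{SJ286}) is the right characterization of $\gss=0$, and your preliminary reductions are correct. The constant-loop argument correctly forces $b=0$ unless $q=2$ and one of $A,B$ is constant, in which case the two constant loops give inconsistent values of $b$ and hence $\gss>0$; and your cyclic-word reformulation of the $b=0$ condition is right. The run-counting argument for the ``easy'' direction (that $A=\sH\sT^{\ell-1}$, $B=\sT^{\ell-1}\sH$ actually has $\gss=0$) is a nice self-contained verification, though strictly speaking this direction is not needed for the theorem as stated — the theorem only asserts that $\gss>0$ \emph{outside} the exceptional family.

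The genuine gap is the hard direction. Observing that $B$ must be a cyclic conjugate of $A$ is correct and useful, but from there you only gesture at ``probing'' families or a ``finite case check'' on the de~Bruijn graph of $\cA^{\ell-1}$. This is not a finite check: the de~Bruijn graph has $q^{\ell-1}$ vertices, so an argument uniform in $\ell$ is required, and nothing in the sketch supplies the rigidity claim that would make the plateaus collapse to the single family $A=\sH\sT^{\ell-1}$, $B=\sT^{\ell-1}\sH$. The paper resolves exactly this by combining two ingredients you do not use: first, the explicit formula \eqref{gss9} together with the bounds $0\le\theta_{UV}<1/(q-1)$ and $\theta_{UV}<1/2$ when $\ell-1\notin\Theta(U,V)$ kill $q\ge 3$ and force $\ell-1\in\Theta(A,B)$ (up to swapping $A\leftrightarrow B$), which immediately pins down $A=aC\sT$, $B=C\sT b$; and second, the height-function identity \eqref{qk4} applied to the four words $aC\sT$, $aC\sH$, $\ba C\sT$, $\ba C\sH$ forces $\ba C\sH=B$, hence $\ba C = C\sT$, hence $C=\sT^{\ell-2}$. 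Your cyclic-conjugacy observation by itself only gives $\ell-s\in\Theta(A,B)$ for some shift $s$, not $\ell-1$, so you cannot reach the paper's key structural form without additional input. As written, the classification — which is the substance of the theorem — is asserted but not proved.
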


\begin{proof}
  Suppose that $A$ and $B$ are distinct words such that $\gss=0$.
By \eqref{gss9}, this is equivalent to
\begin{align}\label{qk1}
1+\theta_{AA}-\theta_{AB}-\theta_{BA}+\theta_{BB}=0.
\end{align}

Note first that for any two words $U$ and $V$ of length $\ell$,
\eqref{tau} yields
\begin{align}\label{qk2}
  0 \le \theta_{UV} \le \sum_{k=1}^{\ell-1}q^{k-\ell}
=\frac{1-q^{-(\ell-1)}}{q-1}<\frac{1}{q-1}.
\end{align}
Hence, \eqref{qk1} is impossible if $q\ge3$, and thus we in the rest of the
proof assume $q=2$  and take the alphabet to be $\set{\sH,\sT}$.
Furthermore, we have to have $\ell\ge2$, since  $\theta_{UV}=0$ when $\ell=1$.

Moreover, if $\ell-1\notin\Theta(U,V)$, then similarly
\begin{align}\label{qk3}
\theta_{UV}\le  \sum_{k=1}^{\ell-2}2^{k-\ell} < \frac12.
\end{align}
Hence, \eqref{qk1} cannot hold unless $\ell-1\in\Theta(A,B)$
or
$\ell-1\in\Theta(B,A)$.
By symmetry, we may assume the first, i.e., that the last $\ell-1$ letters
in $A$ are the same as the first $\ell-1$ letters in $B$. 
By symmetry, we may also assume that $A$ ends with $\sT$. This means that
for some word $C$ of length $\ell-2$ and some letters $a,b\in\set{\sH,\sT}$,
we have
\begin{align}\label{qk5}
  A & = aC\sT, \qquad
B = C\sT b.
\end{align}

Recall from \refS{SAB} that 
$\hS_n=S_{n-\ell+1}=\sum_{k=1}^{n-\ell-1} g(\xi_k\dotsm\xi_{k+\ell-1})$ where
$g:\cA\to\bbZ$ is given by $g=I_A-I_B$; thus
\begin{align}\label{qk0}
g(A)=1, \quad g(B)=-1, \quad\text{and otherwise } g=0.
\end{align}

We use \cite[Theorem 2]{SJ286} which shows that then the asymptotic variance
$\gss=0$ if and only if there exists a function $h:\cA^{\ell-1}\to\bbR$
and a constant $\mu$ such that for any word $a_1\dotsm a_\ell$ we have
\begin{align}\label{qk4}
  g(a_1\dotsm a_\ell)
= h(a_2\dotsm a_\ell)
- 
h(a_1\dotsm a_{\ell-1})
+\mu.
\end{align}
(In fact, it follows from \eqref{qk4} that
$\mu=\E g(\xi_1\dotsm \xi_\ell)=\E g(W_1)=0$, and thus
$\mu=0$ is the same as before, see \eqref{ju8}.)

We have $A=aC\sT$ by \eqref{qk5}, and thus $aC\sH\neq A$.
Hence, \eqref{qk0} and \eqref{qk4} yield
\begin{align}\label{qk6}
  0<g(aC\sT)-g(aC\sH)= h(C\sT)-h(C\sH).
\end{align}
Let $\ba$ be the letter in $\set{\sH,\sT}$ distinct from $a$.
Then by \eqref{qk4} again and  \eqref{qk6},
\begin{align}\label{qk7}
g(\ba C\sT)-g(\ba C\sH)= h(C\sT)-h(C\sH)>0.
\end{align}
Since $\ba C\sT\neq A$, \eqref{qk7} is possible only if $\ba C\sH =B$.
Thus, by \eqref{qk5},
\begin{align}\label{qk8}
  \ba C\sH = B = C\sT b.
\end{align}
Hence, 
$b=\sH$ and
\begin{align}\label{qk9}
  \ba C = C\sT .
\end{align}
By counting the number of $\sT$s on both sides, we see that $\ba=\sT$, and
thus $a=\sH$. 
Finally, \eqref{qk9} shows that $C\sT$ is invariant under cyclic
permutations, and thus must contain only one letter; hence $C=\sT^{\ell-2}$.
The conclusion $A=\sH\sT^{\ell-1}$ and $B=\sT^{\ell-1}\sH$ now follows from
\eqref{qk5}. 
\end{proof}

\subsection{The  aperiodicity condition for Litt's game}
\label{verifyaperiodicity}

We next show that for Litt's game, also the aperiodicity condition
\eqref{em4} is  satisfied except 
in the (more or less trivial) 
cases in \refEs{Egss2}, \ref{EHH-TT} and \ref{EH-T},
i.e., in the  exceptional cases in \refT{Tgss0}
and also in the cases $(\sH,\sT)$  and $(\sH\sH,\sT\sT)$ 
(or vice versa).

\begin{theorem}\label{Tem4}
For Litt's game in \refS{SAB}, the aperiodicity condition \eqref{em4}
holds except in the cases (all with $q=2$)
\begin{romenumerate}
\item\label{Tem4a} 
$A=\sH\sT^{\ell-1}$ and $B=\sT^{\ell-1}\sH$ for some $\ell\ge2$
(see \refE{Egss2}),
\item 
$A=\sH$ and $B=\sT$ 
(see \refE{EH-T}),
\item 
$A=\sH\sH$ and $B=\sT\sT$ 
(see \refE{EHH-TT}),
\end{romenumerate}
and their variants obtained by interchanging Alice and Bob 
or $\sH$ and $\sT$ (or  both).
\end{theorem}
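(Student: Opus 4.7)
My plan is to assume condition \eqref{em4} fails and deduce that $(A,B)$ must, up to the listed symmetries, be one of the three exceptional cases.  By \refProp{PL}\ref{PL4}, failure of \eqref{em4} is equivalent to the existence of integers $N\ge 2$, $b$, and $\gam_U\in\bbZ$ ($U\in\cA^\ell$) satisfying the cocycle
\begin{align}\label{Tem4cocycle}
\gam_V\equiv\gam_U+g(V)-b\pmod N
\end{align}
whenever $P_{UV}>0$, i.e., whenever $V=u_2\dotsm u_\ell c$ for some letter~$c$.  By \refProp{PO}, $N=\infty$ is equivalent to $\gss=0$, and by \refT{Tgss0} this is exactly case~(i).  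Henceforth assume $N<\infty$.

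I would handle $\ell=1$ first and directly: the chain is i.i.d.\ on $\cA$, every state has a self-loop, and \eqref{Tem4cocycle} forces $b\equiv g(c)\pmod N$ for every $c\in\cA$.  If $q\ge3$, a letter $c\notin\set{A,B}$ gives $b\equiv 0$ while $g(A)=1$ gives $b\equiv 1$, a contradiction, so $q=2$ and $\set{A,B}=\set{\sH,\sT}$, giving case~(ii).  For $\ell\ge2$ the key structural reduction is that two words $U,U'\in\cA^\ell$ sharing the same last $\ell-1$ letters transition to a common state under every letter, so \eqref{Tem4cocycle} forces $\gam_U\equiv\gam_{U'}\pmod N$; hence $\gam$ factors through $\cA^{\ell-1}$.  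Specialising \eqref{Tem4cocycle} to the self-loops at the monochromatic states $c^\ell$ yields $b\equiv g(c^\ell)\pmod N$ for each letter~$c$, and straightforward casework on the intersection $\set{A,B}\cap\set{c^\ell:c\in\cA}$ gives: either $b\equiv 0\pmod N$; or $q=2$ with $\set{A,B}=\set{\sH^\ell,\sT^\ell}$ and forcing $N\mid 2$ hence $N=2$, $b=1$; or an immediate contradiction $N\mid 1$.

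In the main subcase $b\equiv 0$, the aim is to produce a closed path $\cQ$ with $g(\cQ)=\pm 1$, which forces $N=1$ and contradicts the hypothesis.  For $q\ge 3$ this is easy: every state has at least two outgoing edges not landing at $B$, from which one deduces that $\cA^\ell\setminus\set{B}$ is strongly connected and hence $A$ lies on a simple cycle avoiding $B$, giving $g(\cQ)=1$.  For $q=2$, I plan to exploit the ``rotation cycle''
\begin{align}\label{Tem4rotcycle}
\sH^\ell\to\sH^{\ell-1}\sT\to\sH^{\ell-2}\sT\sH\to\dotsm\to\sT\sH^{\ell-1}\to\sH^\ell
\end{align}
together with its $\sH\leftrightarrow\sT$ mirror: because the intermediate states of \eqref{Tem4rotcycle} traverse exactly the length-$\ell$ words containing a single $\sT$, the $g$-value of \eqref{Tem4rotcycle} equals $\indic{A\text{ has exactly one }\sT}-\indic{B\text{ has exactly one }\sT}$ (and analogously for the mirror).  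Combining these with further ``rotation-type'' cycles, built by replacing the single flipped letter by a short prescribed block and bridged through $\sH^\ell$ or $\sT^\ell$, produces a system of $\pmod N$ constraints on the digit patterns of $A$ and $B$; together with \refT{Tgss0}, which rules out the $\gss=0$ configuration (case~(i)), these force $N=1$ unless $(A,B)$ lies in an already-excluded case.  Finally, in the remaining subcase $\set{A,B}=\set{\sH^\ell,\sT^\ell}$ with $N=2$, $b=1$, I would trace \eqref{Tem4cocycle} around \eqref{Tem4rotcycle}: this is consistent exactly when $\ell=2$ (case~(iii)) and, for $\ell\ge3$, produces a contradiction of the form $\hat\gam(\sH\sH)\equiv\hat\gam(\sH\sH)+1\pmod 2$.

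The main obstacle will be the $q=2$, $b\equiv 0$, $\gss>0$ portion: the rotation cycle \eqref{Tem4rotcycle} and its mirror do not on their own yield enough parity information, and a careful enumeration of additional closed paths (and of the concatenations used to glue them at $\sH^\ell$ and $\sT^\ell$) is needed to pin down every non-exceptional pair $(A,B)$.  This is combinatorial bookkeeping rather than a new probabilistic idea, but it constitutes the essential core of the proof.
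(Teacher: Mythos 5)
Your proposal is incomplete, and you yourself flag the missing core. The plan correctly reduces the failure of \eqref{em4} via \refProp{PL}\ref{PL4} to a cocycle $\gam$, correctly handles $N=\infty$ via \refT{Tgss0}, handles $\ell=1$ and $q\ge 3$ cleanly, and the observation that $\gam$ must factor through the last $\ell-1$ letters is sound. But the case $q=2$, $b\equiv 0\pmod N$, $\gss>0$ is the entire substance of the theorem, and there you only gesture at ``rotation cycles'' and ``further rotation-type cycles'' producing ``a system of $\pmod N$ constraints'' without carrying out the enumeration or even asserting you see how it would close. As written, there is no argument that this bookkeeping terminates or that it pins down exactly the stated exceptional pairs; so this is a genuine gap, not just a deferred detail.

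It is worth seeing how the paper sidesteps this bookkeeping entirely. Rather than assuming \eqref{em4} fails and hunting for a contradiction, the paper goes forward: it exhibits a closed path of value $0$ and length $1$ (or, when $A=\sH^\ell$, $B=\sT^\ell$ with $\ell\ge 3$, two value-$0$ closed paths of lengths $2$ and $3$), and then shows there exists a closed path through $A$ that avoids $B$, which after truncation has value exactly $1$. The existence of the latter path is equivalent to $\sum_{m\ge 0}W(m)>1$, where $W(m)$ is the total weight of length-$m$ closed paths at $A$ avoiding $B$. The key step is the identity $\sum_{m\ge 0}W(m)=(I-P_{\neq B})^{-1}_{AA}=2+\theta_{AA}+\theta_{BB}-\theta_{AB}-\theta_{BA}$, obtained from the group-inverse formula \eqref{cp1} and \refL{LW}. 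Comparing with \eqref{gss9} shows $\sum_{m\ge 0}W(m)>1$ is literally equivalent to $\gss>0$, which is exactly what \refT{Tgss0} already gives outside case~\ref{Tem4a}. Thus the hard combinatorics you planned to do by hand are replaced by a single linear-algebra computation that ties the aperiodicity condition directly to the positivity of $\gss$. If you wish to complete your approach, you would in effect have to rediscover a path-counting argument of comparable strength; alternatively, you could adopt the paper's forward strategy, which makes the $q=2$, $b\equiv 0$ case trivial once \eqref{ql2} is established.

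One smaller point: your $q\ge 3$ subcase asserts strong connectivity of the de Bruijn graph with $B$ deleted from "every state has at least two outgoing edges not landing at $B$." That is true but by itself only gives out-degree $\ge 2$; you need an argument (not hard, but not immediate) that the deletion of one vertex from this de Bruijn graph leaves a strongly connected digraph. The paper avoids this as well, since the weight calculation works uniformly in $q$.
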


\begin{proof}
Recall that a path $\cQ$ is a sequence $i_0,\dots, i_m$ in $\cW=\cA^\ell$, with
each transition 
having a positive probability.  
The length is defined as $m$ and the value $g(\cQ)$ is
$\sum_{k=1}^m g(i_k)$. 
Define the weight of the path as $w(\cQ):=\prod_{k=1}^m P_{i_{k-1},i_k}$;
this is the probability that if the Markov chain starts in $i_0$, it will
follow $\cQ$ for the next $m$ steps.

To satisfy the aperiodicity condition, it is sufficient to show: 
\begin{alphenumerate}
    \item \label{QL1}
 There exists a closed path $\cQ_1$ such that $g(\cQ_1)=0$ and $\ell(\cQ_1)=1$.
    \item \label{QL2}
There exists a  closed path $\cQ_2$ such that $g(\cQ_2)=1$.
\end{alphenumerate}
In this case, there exists some $N\geq 1$ such that $(0,1)$ and $(1,N)$ 
are in the set 
$\bigset{(g(\cQ),\ell(\cQ)):\cQ \text{ is a closed path}}$ in \eqref{em4}
and it is clear that these two vectors together generate $\mathbb{Z}^2$;
thus \eqref{em4} holds.

The first condition \ref{QL1}
is simple, since 
for every $a\in\cA$, $a\cdots a \to a\cdots a$ is a 
closed path in $\cW$ of length 1. 
If $q\ge3$, or $q=2$ and 
at least one of $A$ or $B$ is non-constant, then at
least one of these paths 
also has value 0, thus satisfying \ref{QL1}.
The remaining case $q=2$ with $A=\sH^\ell$ and $B=\sT^\ell$ (or vice versa)
is as noted an exception if $\ell\le 2$; if $\ell\ge3$, \ref{QL1} does not
hold but we note that the sequences of coin tosses obtained by repeating
$\sH\sT$ or $\sH\sH\sT$ yields closed paths $\cQ_2$ and $\cQ_3$ with lengths
2 and 3 and value 0, which is just as good
since $\gcd(2,3)=1$.

To show \ref{QL2}, it is sufficient to show there exists a closed path
through $A$ (of length $\ge1$) 
that avoids $B$. Given such a path, we can truncate
it to the time of first return to $A$ to ensure that the value of the path
is $1$. 

Let $W(m)$ be the total weight of all closed paths from $A$ with length $m$,
that avoid $B$, with $W(0):=1$.  
It is easily seen that these numbers have the following generating
function: 
\begin{align}\label{ql1}
(I-tP_{\neq B})^{-1}_{A,A}=\sum_{m\ge0}W(m)t^m  
\end{align}
where $P_{\neq B}$ is given by zero-ing out the column of $P$ corresponding
to $B$.  

We need to show that at least one $W(m)$ is non-zero for $m>0$. 
We will do this by showing that 
$\sum_{m\ge0}W(m)>W(0)=1$. 
For this we use the formula \eqref{lw2} in \refL{LW} below, which together
with \eqref{ql1} gives
\begin{align}\label{ql2}
  \sum_{m\ge0}W(m)
= (I-P_{\neq B})^{-1}_{A,A}
=Q_{AA}+Q_{BB}-Q_{AB}-Q_{BA}.
\end{align}
(An alternative proof of \eqref{ql2} is given in \refApp{Benv}.)
The explicit formula \eqref{cp1} now yields
\begin{align}\label{ql3}
  \sum_{m\ge0}W(m)
=2+\theta_{AA}+\theta_{BB}-\theta_{AB}-\theta_{BA}
.\end{align}
Hence, by \eqref{ql3} and \eqref{gss9},
\begin{align}\label{ql4}
 \sum_{m\ge0}W(m)>1
\iff
1+\theta_{AA}+\theta_{BB}-\theta_{AB}-\theta_{BA}>0
\iff \gss>0,
\end{align}
which holds by \refT{Tgss0} except in the excluded case \ref{Tem4a}.  
\end{proof}

The proof above used the following lemma. (See also \refApp{Benv}.)

\begin{lemma}\label{LW}
Let\/ $P$ be an irreducible stochastic matrix on a finite state space $\cW$, 
and let $\pi$ be its stationary distribution. Then, for any
$B\in\cW$, the matrix
$(I-P_{\neq B})$ is invertible, and the inverse has the entries, for $i,j\in\cW$,
\begin{align}\label{lw1}
  (I-P_{\neq B})_{ij}\qw  
=Q_{ij}+ \bigpar{Q_{BB} +(I-Q)_{iB}}\frac{\pi_j}{\pi_B}-Q_{Bj}
.\end{align}
In particular, if the stationary distribution $\pi$ is uniform, then
\begin{align}\label{lw2}
  (I-P_{\neq B})_{ij}\qw  
=\indic{i=B}+Q_{ij}+ Q_{BB} -Q_{iB}-Q_{Bj}
.\end{align}
\end{lemma}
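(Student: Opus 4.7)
My approach is to construct the candidate inverse $M$ explicitly from the right-hand side of \eqref{lw1} and verify by direct matrix calculation that $(I-P_{\neq B})M=I$; since $I-P_{\neq B}$ is a square matrix, this simultaneously establishes its invertibility and identifies its inverse with $M$. The key structural observation is that zeroing out the $B$-th column of $P$ amounts to the rank-one modification $P_{\neq B}=P-(Pe_B)e_B\tr$, where $e_B$ is the standard basis vector at state $B$; consequently
\begin{equation*}
I-P_{\neq B}=(I-P)+(Pe_B)e_B\tr.
\end{equation*}

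First I will repackage the entrywise formula for $M$ in matrix form, writing each of its contributions as an outer product:
\begin{equation*}
M=Q+\frac{1}{\pi_B}\bigpar{Q_{BB}\one+e_B-Qe_B}\pi\tr-\one(e_B\tr Q).
\end{equation*}
Computing $(I-P)M$ then reduces to a routine application of the two identities $(I-P)Q=I-\one\pi\tr$ (equation \eqref{gi4}) and $(I-P)\one=0$. The last term of $M$ is annihilated, the first contributes $I-\one\pi\tr$, and the middle one contributes $\one\pi\tr-\pi_B\qw(Pe_B)\pi\tr$ after substituting $(I-P)e_B=e_B-Pe_B$ and $(I-P)Qe_B=(I-\one\pi\tr)e_B=e_B-\pi_B\one$. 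The $\pm\one\pi\tr$ pieces cancel, leaving
\begin{equation*}
(I-P)M=I-\frac{1}{\pi_B}(Pe_B)\pi\tr.
\end{equation*}

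To complete $(I-P_{\neq B})M=I$ I still need $(Pe_B)e_B\tr M$, and for this only the $B$-th row of $M$ matters. Setting $i=B$ in the claimed formula, the factor $(I-Q)_{BB}=1-Q_{BB}$ combines with $Q_{BB}$ inside the parenthesis to give $1$, while the $Q_{Bj}$ and $-Q_{Bj}$ pieces cancel; hence $M_{Bj}=\pi_j/\pi_B$ and therefore $(Pe_B)e_B\tr M=\pi_B\qw(Pe_B)\pi\tr$, which is precisely the residual from the previous display. Adding yields $I$, as required. The specialization \eqref{lw2} then follows at once by setting $\pi_j/\pi_B=1$ and writing $(I-Q)_{iB}=\delta_{iB}-Q_{iB}$. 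The only step requiring real care is the five-term outer-product decomposition of $M$ and the accompanying sign bookkeeping; once that is in place, the identity drops out with no further obstacle.
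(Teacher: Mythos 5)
Your proof is correct, and the endpoint is the same as the paper's: both arguments verify directly that $(I-P_{\neq B})M=I$, where $M$ is the matrix defined by the right-hand side of \eqref{lw1}. The difference is in the organization. The paper's proof is a purely entrywise computation: it expands $(P_{\neq B}Q)_{kj}$ and $\sum_i (P_{\neq B})_{ki}(I-Q)_{iB}$ with indices and Kronecker deltas until the cancellation emerges. You instead introduce the rank-one split $I-P_{\neq B}=(I-P)+(Pe_B)e_B\tr$ and write $M$ as a sum of outer products, reducing everything to the two identities $(I-P)Q=I-\one\pi\tr$ and $(I-P)\one=0$ plus the observation that $e_B\tr M=\pi\tr/\pi_B$. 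This is essentially the viewpoint the paper only gestures at in the remark after the lemma (where they mention the Woodbury formula), except you stop short of invoking Woodbury and simply multiply out, which keeps the argument self-contained. Your version makes the cancellation mechanism more visible — the residual $-\pi_B^{-1}(Pe_B)\pi\tr$ from $(I-P)M$ is exactly what $(Pe_B)e_B\tr M$ supplies — whereas the paper's index bookkeeping is harder to follow but requires no matrix-algebra overhead. Both are sound; I checked that your intermediate identities $(I-P)e_B=e_B-Pe_B$, $(I-P)Qe_B=e_B-\pi_B\one$, and $M_{Bj}=\pi_j/\pi_B$ are all correct.
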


\begin{proof}
Recall the identity $(I-P)Q=I-\textbf{1}\pi^t$  \eqref{gi4}. 
Hence, $PQ=Q-I+\textbf{1}\pi^t$. 
Thus, using the Kronecker delta $\gd_{kj}:=\indic{k=j}$, 
\begin{align}
(P_{\neq B}Q)_{kj}& =  \sum_{i\neq B} P_{ki}Q_{ij}
\notag\\&
 =  (PQ)_{kj}-P_{kB}Q_{Bj}
\notag\\&
 =  Q_{kj}-\gd_{kj}-\pi_j-P_{kB}Q_{Bj}.
\end{align}
Similarly 
\begin{align}
\smash{\sum_{i}}
(P_{\neq B})_{ki}(I-Q)_{iB}
&=  (P(I-Q))_{kB}-P_{kB}(I-Q)_{BB}
\notag\\&
=  P_{kB}+(I-Q-\textbf{1}\pi^t)_{kB}-P_{kB}(1-Q_{BB})
\notag\\&
=  P_{kB}+\gd_{kB}-Q_{kB}-\pi_B-P_{kB}+Q_{BB}P_{kB}
\notag\\&
 =  \gd_{kB}-Q_{kB}-\pi_B+Q_{BB}P_{kB}
\end{align}
So if we define $M_{ij}$  to be the right hand side of  \eqref{lw1}, then,
using $\sum_{i\neq B} P_{ki}=1-P_{kB}$, 
\begin{align}
(P_{\neq B}M)_{kj}&
=\sum_i (P_{\neq B})_{ki}\left(Q_{ij}+\left(Q_{BB}+(I-Q)_{iB}\right){\frac {\pi_j}{\pi_B}}-Q_{Bj}\right)
\notag\\&
 =  Q_{kj}-\gd_{kj}+\pi_j-P_{kB}Q_{Bj}
+\bigl((1-P_{kB})Q_{BB}
\notag\\&\qquad
+\delta_{kB}-Q_{kB}
-\pi_B+Q_{BB}P_{kB}\bigr){\frac {\pi_j}{\pi_B}}
-(1-P_{kB})Q_{Bj}
\notag\\&
=  -\delta_{kj}+Q_{kj}+\pi_j-P_{kB}Q_{Bj}
+\left(Q_{BB}+\delta_{kB}-Q_{kB}\right){\frac {\pi_j}{\pi_B}}
\notag\\&\qquad
-\pi_j-Q_{Bj}+P_{kB}Q_{Bj}
\notag\\&
 =  -\gd_{kj}+Q_{kj}+\left(Q_{BB}+(I-Q)_{kB}\right){\frac {\pi_j}{\pi_B}}-Q_{Bj}
\notag\\&
 =  -\gd_{kj}+M_{kj}.
\end{align}
Thus $(I-P_{\neq B})M = I $
and the claim follows. 
\end{proof}
\begin{remark}
An alternative proof of this lemma proceeds by writing $I-P_{\neq
  B}=I-P+uv^t=(I-P+\textbf{1}\pi^t)+(uv^t-\textbf{1}\pi^t)$ for appropriate
vectors $u,v$. Since the matrix $I-P+\textbf{1}\pi^t$ is invertible
(Proposition \ref{groupinvformulaprop}), we can now apply the standard Woodbury
formula to write  $(I-P_{\neq B})^{-1}$ as a rank-2
correction to $(I-P+\textbf{1}\pi^t)^{-1}=Q+\textbf{1}\pi^t$. A similar
argument also works to derive a general formula for rank-$k$ updates to
$I-P$. The details are straightforward and left to the reader.  
\end{remark}
\begin{remark}
\label{rmknvisits}
The quantity $\sum_{m\geq 0}W(m)=(I-P_{\neq B})_{AA}^{-1}$ has the
alternative probablistic interpretation the expected number of visits to $A$
before visiting $B$ for a Markov chain started at $A$ (counting the initial
state as a visit). This follows by considering  $N_i$, 
the expected number of visits
to $A$ before visiting $B$ for a random walk started at $i$, and observing
the linear relation
$N_i=\textbf{1}_{i=A}+\sum_{j\neq B} P_{ij}N_j$.
\end{remark}

\begin{proof}[Proof of \refT{TAB}]
By \refT{Tem4},
except in the excluded cases and in the case $A=\sH\sH$ and $B=\sT\sT$ (or
conversely), 
the condition \eqref{em4} holds 
and then 
the result is given by \refT{TAB0}.

In the remaining case
$A=\sH\sH$ and $B=\sT\sT$, \eqref{em4} fails but 
$\gss>0$ and
\eqref{tab1}--\eqref{tab4}
still hold by simple direct calculations, see \refE{EHH-TT}.
\end{proof}

\section{Analysis of state space expansion}
\label{stateExpPiQ}

The proof of Theorem \ref{T2} involves passing to the expanded state space of pairs $(i,j)$. In order to modify the explicit moment formulas for this case, it is necessary to know how the stationary distribution $\pi$ and group inverse $Q$ are transformed under this state-space-expansion operation. 

The expanded transition matrix $\hP_{ij,i'j'}$ evidently satisfies
\begin{equation}\label{cq1}
\hP_{ij,i'j'}=\indic{j=i'}P_{j,j'}
\end{equation}
We have 
\begin{align}\label{cq2}
\sum_{ij}\pi_iP_{i,j} \hP_{ij,i'j'}&=\sum_{ij}\pi_iP_{i,j} \indic{j=i'}P_{j,j'}\notag\\
& =  \sum_{i}\pi_{i}P_{i,i'}P_{i',j'}\notag\\
& =  \pi_{i'}P_{i',j'}
\end{align}
Therefore the expanded stationary distribution is 
\begin{equation}\label{cq3}
\Pi_{ij}=\pi_iP_{ij}
\end{equation}

We also need to work out the expanded group inverse $\hQ:=(I-\hP)\gx$. 
A simple induction shows 
\begin{eqnarray}\label{cq4}
(\hP^k)_{ij,i'j'}& = & (P^{k-1})_{j,i'}P_{i',j'}
\end{eqnarray}
for $k\geq 1$. Another way to see this is to note that a length-$k$ path in
the extended state space from $(i,j)$ to $(i',j')$ can be decomposed into a
length $k-1$ path in the original state spce between $j$ and $i'$, together
with the transition $i'\to j$.  

Therefore, for $0\le t<1$:
\begin{align}\label{cq5}
(I-t\hP)_{ij,i'j'}^{-1}& =  I_{ij,i'j'}+\sum_{k\geq 1}t^k(\hP^k)_{ij,i'j'}\notag\\
& = I_{ij,i'j'}+\sum_{k\geq 1}t^k (P^{k-1})_{j,i'}P_{i',j'}\notag\\
& =  I_{ij,i'j'}+P_{i',j'}t\sum_{k\geq 0}t^k(P^k)_{j,i'}\notag\\
& =  I_{ij,i'j'}+P_{i',j'}t(I-tP)^{-1}_{j,i'}
.\end{align}
Using the group inverse formula 
in \refProp{groupinvformulaprop} and \eqref{cq3} thus yields: 
\begin{align}
(I-\hP)_{ij,i'j'}\gx& 
=  \lim_{t\upto 1}\bigpar{ (I-t\hP)_{ij,i'j'}^{-1}-\pi_{i'}P_{i'j'}/(1-t)}
\notag\\
& =  \lim_{t\upto1}\bigpar{\indic{i=i',j=j'}+tP_{i',j'}(I-tP)_{j,i'}^{-1}-\pi_{i'}P_{i',j'}/(1-t)}\notag\\
& =   \indic{i=i',j=j'}
+P_{i',j'}\lim_{t\upto1}\left(t(I-tP)_{j,i'}^{-1}-\pi_{i'}/(1-t)\right)
.\label{cq6}
\end{align}
Now: 
\begin{eqnarray}\label{cq7}
t(I-tP)^{-1}-\one\pi\tr/(1-t)& = & t(I-tP)^{-1}-t\one\pi\tr/(1-t)+(-1+t)\one\pi\tr/(1-t)\notag\\
& = & t\bigpar{(I-tP)^{-1}-\one\pi\tr/(1-t)}-\one\pi\tr
.\end{eqnarray}
Thus, using \refProp{groupinvformulaprop} again,
\begin{align}\label{cq8}
\lim_{t\upto1}\bigpar{ t(I-tP)^{-1}-\one\pi\tr/(1-t)}
=(I-P)\gx-\one\pi\tr  
.\end{align}
Returning to the previous calculation in \eqref{cq6}, we thus find by
\eqref{cq8} 
\begin{align}\label{cq9}
\hQ_{ij,i'j'}=
(I-\hP)\gx_{ij,i'j'}& =  \indic{i=i',j=j'}+P_{i',j'}(Q_{j,i'}-\pi_{i'}),
\end{align}
where
$Q=(I-P)\gx$ as before. Consequently, $\hQ':=\hQ-I$ is given by
\begin{align}\label{cq10}
\hQ'_{ij,i'j'}
= P_{i',j'}(Q_{j,i'}-\pi_{i'}),
\end{align}
Now we can directly apply the explicit moment formulas
\eqref{t1a}--\eqref{t1f}
to the expanded state space. 

\section{Further remarks}\label{Sext}

\subsection{Some extensions}

We give here some rather brief comments on extensions of the basic
\refTs{T1} and \ref{T2} for finite-state Markov chains.
We discuss three such extensions separately; they may without difficulty be
combined.

\subsubsection{Higher order expansions}\label{SSShigher}
  We have in our results only considered one-term Edgeworth expansions, with
  an extra term of order $n\qqw$ and error of order $O(n\qw)$.
It is by the methods above possible to obtain also expansions with further terms
and errors of order $n^{-m/2}$ for, in principle, any integer $m$.
This is achieved by taking the Taylor expansion in \eqref{lu9} to higher
order,
and then arguing as above and in \cite{Esseen}. 
Note, however, that then also $\gam(t)$ has to be expanded explicitly in
\eqref{lu9}, and thus the higher order terms in the expansion will also
depend
on the first derivatives $\gl^{(m)}(1)$ of the eigenvalue $\gl(z)$ at $z=1$.
These derivatives can be found e.g.\ by 
repeatedly differentiating the characteristic equation
\begin{align}\label{rhigh1}
  \det\bigpar{P(z)-\gl(z)I}=0
\end{align}
at $z=1$. We leave the details to the reader.
See also \cite{Zeilberger} for asymptotic expansions with higher order
terms for the original $\sH\sH$ vs $\sH\sT$ problem.

\subsubsection{Arbitrary initial values}\label{SSSh}
One generalization of \refT{T1}
is to add to $S_n$ some initial value, say $h(X_1)$ for
a given function $h:\cW\to\bbZ$; thus now
\begin{align}\label{xp1}
  S_n:= h(X_1)+\sumkn g(W_k).
\end{align}
An example where this occurs is if Alice and Bob score words of different
lengths. For a simple example, suppose that Alice scores a point when $\sH\sH$
appears, while Bob scores a point when $\sH\sH\sT$ or $\sT\sH\sT$ appears. 
From the third
coinflip on, this is exactly the original problem in disguise, but here
Alice may also gain a point when the second coin is tossed.
If we as in \refS{SAB} consider the Markov chain consisting of
$W_k$ given by \eqref{ju2}, now taking $\ell=3$, then the net score is
$S_{n-2}$ given by \eqref{xp1} with a suitable $g$ and
$h:=I_{\sH\sH\sH}+I_{\sH\sH\sT}$. 

In this version, \refLs{L1}--\ref{L3} still hold, provided we replace
$\pi_1(z)$ defined in \eqref{jp7} by
\begin{align}\label{jp7h}
\pi^h_1(z)=(\pi^h_1(z)_{i})_{i\in\cW}
\quad\text{where}\quad
\pi^h_1(z)_{i}:=\pi_{1;i}z^{g(i)+h(i)}
.\end{align}
However,
this modification changes also $\eta(z)$ by \eqref{jq6},
and since 
$\gam'(0)=\ii\eta'(1)$ by \eqref{lu5},
$\gam'(0)$ may be modified and is in general no longer 0,
so \refC{CL3} does not hold.
In fact, by  taking derivatives in \eqref{jq6} and comparing
with the original case with $h=0$ 
(for which $\eta'(1)=\gam'(0)=0$ by the proof in \refS{SpfT1}),
we see that
\begin{align}\label{gh1}
  \eta'(1)=0+\sum_{j\in\cW}\pi_{1;j}h(j) = \E h(W_1),
\end{align}
and hence, 
\begin{align}\label{gh2}
  \gam'(0)=\ii \eta'(1)=\ii\E h(W_1).
\end{align}
If we still let $\mu:=\E X_k$, and also 
\begin{align}\label{gh3}
  \gD:=-\ii\gam'(0)=\eta'(1)=\E h(W_1),
\end{align}
then obviously
\begin{align}\label{gh4}
  \E S_n = n\mu + \gD.
\end{align}
In particular, \refL{L3} implies $\gk_1=\mu$.

It is now necessary to include a term
$\gam'(0) t=\ii \gD t$ 
in the Taylor expansion \eqref{lu9}.
The rest of the proof goes through if we replace $n\mu$ by $n\mu+\gD$,
and thus \eqref{t1} holds with this change. We may then replace $x$ by
$x-\gD/(\gs\sqrt n)$, which using Taylor expansions yields
\begin{multline}\label{gh5}
\P(S_n -n\mu\le x\gs\sqrt n)
= \Phi(x) + \frac{\gk_3}{6\gs^3\sqrt{2\pi n}}(1-x^2)e^{-x^2/2}
-\frac{\gD}{\gs\sqrt{2\pi n}}e^{-x^2/2}
\\
+\frac{1}{\gs\sqrt{2\pi n}}\gth(x\gs\sqrt n-n\mu)e^{-x^2/2}
+ O\bigpar{n^{-1}}
.\end{multline}
In other words, \eqref{t1} holds with an extra term 
$-\xqfrac{\gD}{\gs\sqrt{2\pi n}}e^{-x^2/2}$.
We omit the details.
In \refC{C1}, this means an extra term
$-\xqfrac{\gD}{\gs\sqrt{2\pi n}}$ in \eqref{c1+} and \eqref{c1-}, and thus
twice as much in \eqref{c1+-}.

This should not be surprising. Consider for simplicity the case
$\mu=0$ and $x=0$ in \refC{C1}. If $h(W):=1$ deterministically,
so we just add 1 to $S_n$, then $\P(S_n\le0)$ is decreased by $\P(S_n=0)$,
which up to $O(1/n)$  equals
$\xqfrac{1}{\gs\sqrt{2\pi n}}$ by \eqref{c10}.
Hence the result just shown says that if \eqref{xp1} holds so the score on
the average is increased by $\gD=\E h(W_1)$ compared to the standard case, 
then the probability $\P(S_n\le0)$
shifts by $\gD$ times as much as if we instead add 1 
deterministically.

\subsubsection{Arbitrary initial distribution}\label{SSSI}
  We have asssumed in \refTs{T1} and \ref{T2} that the Markov chain is
stationary. We may generalize to an arbitrary initial distribution $\pi_1$
(or, in \refT{T2}, $\pi_0$).
Then  \refLs{L1}--\ref{L3} still hold, with $\gl(z)$ still given by
\eqref{jq6}.
However, as in \refSSS{SSSh},
in general $\gam'(0)\neq0$,
so \refC{CL3} does not hold.
(We still have $\gam'(0)=\ii \eta'(1)$ by \eqref{lu5}, 
and $\eta(z)$ depends on $\pi_1$ by \eqref{jq5}.)
If we define $\mu:=\gk_1$, then
\refL{L3} shows that $\E S_n/n=\kk_1(S_n)/n\to\mu$, but
in general $\E S_n$ is not equal to $n\mu$; in fact, \eqref{lu7} with $m=1$
implies that
\begin{align}\label{rit}
  \E S_n - n\mu = \sumkn(\E X_k -\mu) \to \gD:=-\ii \gam'(0).
\end{align}
As in \refS{SSSh}, 
it is now necessary to include a term
$\gam'(0) t=\ii \gD t$ 
in the Taylor expansion \eqref{lu9}.
This has the same consequences as in \refSSS{SSSh},
and again we have \eqref{gh5} and its consequences.

Again this is not surprising: a different initial distribution implies by 
\eqref{rit} an average extra score $\gD$, and (up to order $n\qw$)
the probabilities in \refT{T1} and \refC{C1} shift by an amount proportional
to this average extra score, just as in \refSSS{SSSh}.

\subsection{Litt's game with several words each}

We observe that our result \refT{T1} can be applied to an even more
general formulation of Litt's game, in which Alice and Bob each have several
words that yield positive points for them 
(this generalization is considered by \cite{Zeilberger});
moreover, we may let different words give different numbers of points. 
As before, the value of each word is encoded in the
function $g$ (with a $\pm$ sign
depending on whether it gives points to Alice or Bob).  

If we can verify \eqref{em4} holds for the chain so defined, then \refT{T1} applies; combining \eqref{t1a}--\eqref{t1f} with \refProp{P4} gives explicit formulas for the asymptotics directly analogous with the two-word case  (although the formulas do not simplify as much in the general case). Verifying \eqref{em4} in this case is however more difficult, since we do not have an analogue of \refT{Tem4}. However we can often verify \eqref{em4} on a case-by-case basis by an adaptation of the proof of \refT{Tem4}. For example, if neither Alice nor Bob gets points for $\sH\dotsm \sH$, and furthermore $[z]\Tr(I-P(z))^{-1}>0$, then \eqref{em4} holds (and hence everything goes through as in the two-word case). In general, it is an interesting question to what extent we can characterize the conditions under which \eqref{em4} holds in the multiple-word case.

\section*{Acknowledgements}
We thank Daniel Litt for posing the problem,
Doron Zeilberger for enthusiasm and bringing us together on the problem,
and Geoffrey Grimmett for inspiring and interesting comments.

\appendix
\section{Taylor expansion of eigenvalues}\label{Aev}
We give here a direct proof of the Taylor expansion
\eqref{qa6}--\eqref{qa9} of the eigenvalue $\gl(F(t))$.
Both the result and the method are old, but we do not know an explicit
reference so we give the details here for completeness. 
It is obvious that the calculations can be continued to the derivative of an
arbitrary order, but we need only the first three derivatives.

\begin{proposition}
  Let $t\mapsto F(t)$ be a $C^3$
(three times continuously differentiable) map 
defined on some neighborhood of $0$ and with values $F(t)$ in the space of
$m\times m$ matrices for some $m\ge1$. Suppose that $F(0)$ has an eigenvalue
$\gl_0$ that is (algebraically) simple.
Let $Q$ be the group inverse of $\gl_0 I-F(0)$ and let
$u\tr$ and $v$ be 
left and right eigenvectors  of $F(0)$ for the eigenvalue $0$,
with the normalization $u\tr v=1$. 
(Such vectors exist since the eigenvalue is simple.)  
Then, for $t$ in a possibly smaller
neighborhood of $0$, $F(t)$ has an eigenvalue $\gl(t)$ such that
$\gl(0)=\gl_0$ and $t\mapsto \gl(t)$ is $C^3$ with,
writing $F',F'',F'''$ for $F'(0),F''(0),F'''(0)$,
\begin{align}
  \gl'(0)&= u\tr F' v, \label{lev1}
\\\label{lev2}
\gl''(0)&
=u\tr F''v+2 u\tr F'Q F'v.
\\\label{lev3}
\gl'''(0)&=
u\tr F''' v +3u\tr F''QF'v +3u\tr F'QF''v
+6u\tr F'QF'QF'v
\notag\\&\qquad
-6(u\tr F'v)(u\tr F'Q^2F'v)
.
\end{align}
\end{proposition}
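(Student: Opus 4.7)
The approach is to construct a $C^3$ right eigenvector $v(t)$ with $v(0)=v$ tracking $\gl(t)$, differentiate the identity $F(t)v(t)=\gl(t)v(t)$ three times at $t=0$, and then read off each $\gl^{(k)}(0)$ by left-multiplication by $u\tr$ while solving for each $v^{(k)}(0)$ with the group inverse $Q$.

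For the regularity, since $\gl_0$ is an algebraically simple eigenvalue of $F(0)$, the implicit function theorem applied to $\det(F(t)-\zeta I)=0$ at $(t,\zeta)=(0,\gl_0)$ (where $\partial_\zeta$ of the determinant is nonzero because $\gl_0$ is a simple root) yields a unique $C^3$ branch $\gl(t)$ with $\gl(0)=\gl_0$. The eigenvector will be produced via the Riesz spectral projection
\begin{equation*}
P(t):=-\frac{1}{2\pi\ii}\oint_{\gG}(F(t)-zI)\qw\dd z,
\end{equation*}
where $\gG$ is a small circle around $\gl_0$ separating it from the rest of the spectrum of $F(0)$; this projection is $C^3$ and rank one for $t$ near $0$. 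Setting $v(t):=P(t)v/(u\tr P(t)v)$ then gives a $C^3$ right eigenvector with the crucial normalization $u\tr v(t)\equiv 1$, which forces $u\tr v^{(k)}(0)=0$ for every $k\ge 1$ and causes many terms to drop out downstream.

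Write $M:=\gl_0 I-F(0)$, so that $Mv=0$, $u\tr M=0$, and the group inverse identities give $MQ=QM=I-vu\tr$, together with $Qv=0$ and $u\tr Q=0$. Differentiating $F(t)v(t)=\gl(t)v(t)$ once at $t=0$ yields $Mv'=F'v-\gl'v$; left-multiplication by $u\tr$ annihilates the left side and yields $\gl'=u\tr F'v$, and then applying $Q$ together with $u\tr v'=0$ (so that $QMv'=v'$) and $Qv=0$ gives $v'=QF'v$. A second differentiation, the same $u\tr$ trick, and substitution of $v'$ produce \eqref{lev2}, plus the companion identity $v''=QF''v+2QF'QF'v-2\gl'Q^2F'v$; here $Q^2$ first appears, via $Qv'=Q^2F'v$. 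One more differentiation, one more left-multiplication by $u\tr$ (which kills the $v'''$ and $v''$ terms on the right thanks to the normalization), and substitution of the formulas already found for $v'$ and $v''$ yields \eqref{lev3} after collecting terms.

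The main obstacle is purely bookkeeping: the Leibniz expansion at third order produces many cross terms, and one must systematically track which terms are annihilated by $u\tr M=0$, which vanish through the normalization $u\tr v^{(k)}(0)=0$, and which must be re-expressed using $Qv=0$, $u\tr Q=0$, and $QM=I-vu\tr$. The term $-6(u\tr F'v)(u\tr F'Q^2F'v)$ in \eqref{lev3} is the one requiring most care: it is the unique place where $Q^2$ survives, and its sign traces back through the intermediate step $Mv''=F''v+2F'v'-\gl''v-2\gl'v'$ and the substitution $Qv'=Q^2F'v$, combined with $\gl'=u\tr F'v$.
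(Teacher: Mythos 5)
Your proof is correct and takes essentially the same route as the paper: reduce via the eigenvector $v(t)$ normalized by $u\tr v(t)\equiv1$, differentiate the eigenvalue equation, kill the $Mv^{(k)}$ terms by left-multiplying with $u\tr$, and recover $v'$ and $v''$ via the group inverse. The only difference is cosmetic — you produce $v(t)$ as a $C^3$ function through the Riesz spectral projection, whereas the paper invokes Cramer's rule for the same purpose.
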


\begin{proof}
By replacing $F(t)$ by $F(t)-\gl_0 I$ we may assume that $\gl_0=0$.
Then $Q$ is the group inverse of $-F(0)$. (Sorry for the minus sign 
with our choice of  notation!)

The characteristic polynomial $f(\gl;t):=\det(\gl I-F(t))$ has coefficients
that are   $C^3$, and $\gl_0=0$ is a simple root of $f(\cdot;0)$;
thus $f(0,0)=0$ and $\frac{\partial f}{\partial\gl}(0,0)\neq0$.
Hence the implicit function theorem shows the existence of a $C^3$ function
$\gl(t)$ 
that is a simple root of $f(t)$, and thus a simple eigenvalue of
$F(t)$, with $\gl(0)=0$.
It remains only to compute the derivatives at 0. 

It follows, e.g.\ using Cramer's rule, that for small $t$ there
exists an eigenvector $v(t)$ of $F(t)$  such that 
$v(t)$ is a $C^3$ function of $t$ with
\begin{align}
v(0)&=v, \label{ev0}
\\\label{ev1}
  F(t)v(t) &= \gl(t)v(t),
\\\label{ev2}
u\tr v(t)&=1.
\end{align}
We now differentiate the eigenvalue equation \eqref{ev1}, which yields 
\begin{align}
  \label{ev3}
\gl'(t)v(t)+\gl(t)v'(t)=F'(t)v(t)+F(t)v'(t).
\end{align}
Multiply to the left by $u\tr$, and note that repeated differentiating of
\eqref{ev2} yields
\begin{align}
  \label{ev4}
u\tr v^{(k)}(t)=0,\qquad k\ge1.
\end{align}
Hence, \eqref{ev3} yields, using \eqref{ev2} and \eqref{ev4},
\begin{align}
  \label{ev5}
\gl'(t)=
u\tr\bigpar{\gl'(t)v(t)+\gl(t)v'(t)}
=u\tr F'(t)v(t)+u\tr F(t)v'(t).
\end{align}
Taking $t=0$ and recalling $u\tr F(0)=0$ we obtain \eqref{lev1}.

Since $Q$ is the group inverse of $-F(0)$, and $F(0)v=0$, we have
\begin{align}\label{ev6}
  Q v =0.
\end{align}
Moreover, \eqref{ev4} shows that $v'(0)$ is orthogonal to $u$, which 
(since the eigenvalue 0 of $F(0)$ is simple) 
spans the null space of the transpose matrix $F'(0)$; 
hence $v'(0)\in \ran(F(0))$,
which implies (by \eqref{gi1}--\eqref{gi2})
that $v'(0)=-QF(0)v'(0)$. Hence \eqref{ev3} implies, using also
$\gl(0)=0$ and \eqref{ev6},
\begin{align}\label{ev7}
  v'(0)=-Q F(0)v'(0)
=-Q \bigpar{\gl'(0) v -F'(0)v}
=Q F'(0)v.
\end{align}

Differentiate \eqref{ev3} again; this yields
\begin{align}
  \label{ev21}
\gl''(t)v(t)+2\gl'(t)v'(t)+\gl(t)v''(t)
=F''(t)v(t)+2F'(t)v'(t)+F(t)v''(t).
\end{align}
Multiplying to the left by $u\tr$ yields, using \eqref{ev4} and \eqref{ev7},
\begin{align}
  \label{ev22}
\gl''(t)
=u\tr F''(t)v(t)+2 u\tr F'(t)v'(t)+u\tr F(t)v''(t),
\end{align}
which yields \eqref{lev2} by taking $t=0$ and using \eqref{ev7} and \eqref{ev4}.

Furthermore, just as for $v'(0)$, 
\eqref{ev4} shows that $v''(0)$ is orthogonal to $u$, 
and thus $v''(0)=-QF(0)v''(0)$. Hence \eqref{ev21} implies, using 
$\gl(0)=0$, \eqref{ev6},  \eqref{ev7}, and \eqref{lev1},
\begin{align}\label{ev27}
  v''(0)
&=-Q F(0)v''(0)
=-2\gl'(0) Qv'(0)
+Q\bigpar{F''(0)v(0)+2F'(0)v'(0)}
\notag\\&
=-2(u\tr F'(0)v) Q^2F'(0)v
+QF''(0)v+2QF'(0)QF'(0)v
.
\end{align}

Finally, differentiating \eqref{ev22} and taking $t=0$ yields, 
since $u\tr F(0)=0$,
\begin{align}
  \label{ev32}
\gl'''(0)
=u\tr F'''(0)v+3u\tr F''(0)v'(0)+3 u\tr F'(0)v''(0),
\end{align}
and \eqref{lev3} follows by substituting \eqref{ev7} and \eqref{ev27}.
\end{proof}

\section{Alternative proof of \eqref{ql2}} \label{Benv}
We give here an independent and more ``probabilistic" proof 
of \eqref{ql2} in the proof of Theorem \ref{Tem4}; 
this proof bypasses the matrix computations of Lemma \ref{LW}. 

Let $V_A^B$ be the number of times that a random walk started from $A$ hits
$A$ prior to hitting $B$ (we count the initial state as a hit).
By \refR{rmknvisits}, the sum $\sum_{m\ge0}W(m)$ in the proof of \refT{Tem4}
equals the expectation $\E V_A^B$. Recall that
the crux of the proof of Theorem \ref{Tem4} was to show 
that this sum is greater than $W(0)=1$, in other words, that
$\E V_A^B>1$. We showed this by showing the formula \eqref{ql2},
which is a special case of \refL{LW}, 
proved above by algebraic calculations.

We give here an alternative proof 
of the following special case of \refL{LW}, which is sufficient for our use
of it in \eqref{ql2};
the proof uses standard methods and known results
for Markov chains.
(We guess that similar arguments can be used to prove \refL{LW} in general,
but we have not pursued this.)

\begin{lemma}\label{LWB}
Let\/ $P$ be an irreducible stochastic matrix on a finite state space $\cW$, 
and let $\pi$ be its stationary distribution. Then, for any distinct states
$A,B\in\cW$, 
\begin{equation}\label{VQ}
  (I-P_{\neq B})_{AA}\qw  
=
\E V_A^B=Q_{AA}-Q_{BA}+(Q_{BB}-Q_{AB}){\frac {\pi_A}{\pi_B}}
.\end{equation}
\end{lemma}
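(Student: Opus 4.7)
The plan is to reduce identity \eqref{VQ} to the classical \emph{commute-time identity}
\begin{align*}
\E V_A^B = \pi_A\bigl(m_{AB}+m_{BA}\bigr),
\end{align*}
where $m_{ij}$ denotes the mean first passage time from state $i$ to state $j$. The first equality in \eqref{VQ} is already recorded in \refR{rmknvisits}, so only the second equality requires proof.

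First I would rewrite the right-hand side of \eqref{VQ} using the well-known Kemeny--Snell/Meyer formula
\begin{align*}
m_{ij}=\frac{Q_{jj}-Q_{ij}}{\pi_j},\qquad i\ne j,
\end{align*}
which is a direct consequence of \eqref{gi4}; see \cite{meyer}. Substituting gives $Q_{AA}-Q_{BA}=\pi_A m_{BA}$ and $Q_{BB}-Q_{AB}=\pi_B m_{AB}$, so the right-hand side of \eqref{VQ} collapses to $\pi_A m_{BA}+\pi_A m_{AB}=\pi_A(m_{AB}+m_{BA})$. This reduces the lemma to the commute-time identity displayed above.

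Next I would establish the commute-time identity by a renewal argument. Starting the chain at $A$, the successive round trips $A\to B\to A$ form, by the strong Markov property, an i.i.d.\ sequence of regeneration cycles of mean length $m_{AB}+m_{BA}$. Within each cycle the chain visits $A$ exactly $V_A^B$ times, using the convention of \refR{rmknvisits}: the visits strictly before the first hit of $B$ are counted by $V_A^B$; after reaching $B$ the chain cannot revisit $A$ before closing the cycle; and the terminal return to $A$ is not counted since it initiates the next cycle. The renewal reward theorem then identifies the long-run fraction of time spent at $A$ with $\E V_A^B/(m_{AB}+m_{BA})$, while the ergodic theorem for the irreducible chain identifies this same limit with $\pi_A$; equating the two yields the identity.

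I anticipate no genuine obstacle, as the approach is entirely standard. The only mild care needed is the bookkeeping at cycle boundaries, for which the convention of \refR{rmknvisits} is exactly right. As remarked in the text, the same strategy should adapt to prove \refL{LW} in full generality, although we do not pursue this here.
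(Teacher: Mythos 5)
Your argument is essentially the same as the paper's: reduce to the commute-time identity $\E V_A^B=\pi_A(m_{AB}+m_{BA})$ via a regeneration-cycle argument using the ergodic theorem, and then substitute the mean-first-passage-time formula $m_{ij}=(Q_{jj}-Q_{ij})/\pi_j$. The only cosmetic difference is that you merge the two intermediate steps (the paper's \refL{TvsV} and \eqref{mabba}) into a single renewal-reward argument, and you cite \cite{meyer} rather than \cite[Corollary 11.7]{hunter} for the formula linking $m_{ij}$ to $Q$.
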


As a first step in the proof, we introduce the random variable
\begin{equation}
T_A^B:= \textnormal{time that a walk started from $A$ first visits $A$ after having visited $B$}.
\end{equation}

\begin{lemma}
\label{TvsV}
With the notation defined above,
\begin{equation}\label{lb2}
  \E V_A^B
=
\pi_A\E T_A^B 
.\end{equation}
\end{lemma}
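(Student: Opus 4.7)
The plan is to interpret the equation as the elementary renewal theorem (in ratio form) applied to a suitably chosen regenerative decomposition of the Markov chain started at $A$.

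First I would set up the regeneration. Consider the chain $(W_k)_{k\ge0}$ started at $W_0=A$, and define recursively a sequence of stopping times $0=\tau_0<\tau_1<\tau_2<\dotsb$ where $\tau_1:=T_A^B$ and, more generally, $\tau_{k+1}-\tau_k$ is the time (measured from $\tau_k$) at which the post-$\tau_k$ chain first visits $A$ after having visited $B$. Since $\cW$ is finite and $P$ is irreducible, all these hitting times have finite (indeed, exponentially tailed) expectations, and by the strong Markov property the increments $\tau_{k+1}-\tau_k$ are i.i.d.\ with the same distribution as $T_A^B$, and the numbers of visits to $A$ during $[\tau_k,\tau_{k+1})$ are i.i.d.\ with the same distribution as $V_A^B$ (each cycle includes its left endpoint, which is at $A$, but not its right endpoint).

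Next I would invoke two standard facts. On the one hand, by the law of large numbers applied to the i.i.d.\ cycles (\ie{} the ratio form of the renewal/renewal-reward theorem),
\begin{align}
\frac{1}{n}\sum_{k=0}^{n-1}\indic{W_k=A} \asto \frac{\E V_A^B}{\E T_A^B}
\end{align}
as $n\to\infty$, since the left-hand side counts visits to $A$ accumulated over roughly $n/\E T_A^B$ complete cycles, each contributing $\E V_A^B$ visits in expectation. On the other hand, the ergodic theorem for irreducible finite Markov chains gives
\begin{align}
\frac{1}{n}\sum_{k=0}^{n-1}\indic{W_k=A} \asto \pi_A.
\end{align}
Equating the two limits yields \eqref{lb2}.

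The only step that needs a little care is justifying the renewal reward limit, but since $\E T_A^B<\infty$ and each cycle contributes at most $T_A^B$ visits to $A$ (so $V_A^B\le T_A^B$, and in particular has finite mean), this is an entirely standard application of the strong law of large numbers to the partial sums of the i.i.d.\ pairs $(\tau_{k+1}-\tau_k,\,\#\set{j\in[\tau_k,\tau_{k+1}):W_j=A})$, combined with the sandwich bound between the number of completed cycles before time $n$ and the number begun by time $n$. There is no real obstacle here; the content of the lemma is simply the identification of the two expressions for the asymptotic occupation frequency of $A$.
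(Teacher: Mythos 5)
Your proof is correct and follows essentially the same route as the paper: set up the same regenerative decomposition at successive returns to $A$ after visiting $B$, apply the law of large numbers to the i.i.d.\ cycle lengths and cycle visit counts to get $\E V_A^B/\E T_A^B$ as the long-run occupation frequency of $A$, and equate this with $\pi_A$ via the Markov chain ergodic theorem. The only cosmetic difference is that the paper evaluates the ratio $V_n/T_n$ along the regeneration times (so no sandwich argument is needed), while you phrase the renewal-reward limit at arbitrary times $n$.
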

\begin{proof}
Let $\gamma$ be a random walk starting from $A$. 
Define the sequence of random times $T_0,T_1,\dots$ as $T_0:=0$ and 
\begin{equation}
T_i:=\min\{j: j>T_{i-1}, \gamma_j=A, \gamma_{j'}=B\textnormal{ for some } T_{i-1}<j'<j\};
\end{equation}
in other words, $T_i$ is the time of the first return to $A$ after having
visited $B$ after $T_{i-1}$.
Since the chain is irreducible and finite, all $T_i$ are finite, almost
surely. Moreover, by the Markov property, all $T_i-T_{i-1}$ are i.i.d. 
By definition, $T_1-T_{0}=T_1=T_A^B$. 

We define associated ``rewards" as 
\begin{equation}
V_i=\#\{j: 0\leq j<T_i: \gamma_j=A\}
\end{equation}
i.e., the number of visits to $A$ before $T_i$. 
Similarly to the above, the differences $V_i-V_{i-1}$ are \iid{} by the
Markov property. 
Slightly less obviously, we have $V_1-V_{0}=V_A^B$. This is because
$T_1$ is the time of the first visit to $A$ which occurs after the first
visit to $B$. Therefore, any visits to $A$ prior to $T_1$ must in fact occur
prior to the first visit to $B$.  

By the law of large numbers we thus have, as \ntoo, a.s.,
\begin{align}
  \frac{T_n}{n} &=\frac{\sum_{i=1}^n(T_i-T_{i-1})}{n}\to \E (T_1-T_0) = \E T_A^B,
\\
  \frac{V_n}{n} &=\frac{\sum_{i=1}^n(V_i-V_{i-1})}{n}\to \E (V_1-V_0) = \E V_A^B,
\end{align}
and thus
\begin{align}\label{ab8}
    \frac{V_n}{T_n} &\to \frac{\E V_A^B}{ \E T_A^B}.
\end{align}
On the other hand, 
by the Markov chain ergodic theorem,
a.s.
\begin{equation}\label{ab9}
\frac{V_n}{T_n}
={\frac {\textnormal{number of visits to $A$ before } T_n}{T_n}}
\to \pi_A.
\end{equation}
Finally, \eqref{lb2} follows by comparing \eqref{ab8} and \eqref{ab9}.
\end{proof}

\begin{proof}[Proof of \refL{LWB}]
The first equality in \eqref{VQ} was noted in \refR{rmknvisits}.
For the second equality, we
introduce the mean first passage times $m_{ij}$. 
For distinct $i$ and $j$,
$m_{ij}$ is the average number of steps for a walk started at $i$ to reach
$j$ for the first time. We define, for convenience, $m_{ii}:=0$.  

We can express $\E T_A^B$ in terms of the mean first passage times. Indeed, waiting for the first visit to $A$ after passing through $B$ is equivalent to waiting for the first visit to $B$, and then subsequently waiting for the first visit to $A$ thereafter. So by linearity of expectation,  
\begin{equation}
\label{mabba}
\E T_A^B=m_{AB}+m_{BA}.
\end{equation}
Combining with Lemma \ref{TvsV}, we thus have 
\begin{equation}\label{vabba}
\E V_A^B=\pi_A (m_{AB}+m_{BA}).
\end{equation}

On the other hand, the mean first passage times are known to be closely related to the entries of the group inverse. More precisely, we have 
\begin{equation}
\label{qm}
Q_{ij}=\pi_j(\tau'_j-m_{ij}),
\qquad i,j\in\cW,
\end{equation}
where $\tau_j':=\sum_{k\neq j}\pi_km_{kj}$ 
\cite[Corollary 11.7]{hunter}. 
As a simple consequence, 
\begin{equation}\label{mijq}
m_{ij}={\frac{Q_{jj}-Q_{ij}}{\pi_j}}.
\end{equation}
Combining \eqref{mijq} with \eqref{vabba} yields 
the second equality in \eqref{VQ}.
\end{proof}

\newcommand\AAP{\emph{Adv. Appl. Probab.} }
\newcommand\JAP{\emph{J. Appl. Probab.} }
\newcommand\JAMS{\emph{J. \AMS} }
\newcommand\MAMS{\emph{Memoirs \AMS} }
\newcommand\PAMS{\emph{Proc. \AMS} }
\newcommand\TAMS{\emph{Trans. \AMS} }
\newcommand\AnnMS{\emph{Ann. Math. Statist.} }
\newcommand\AnnPr{\emph{Ann. Probab.} }
\newcommand\CPC{\emph{Combin. Probab. Comput.} }
\newcommand\JMAA{\emph{J. Math. Anal. Appl.} }
\newcommand\RSA{\emph{Random Structures Algorithms} }
\newcommand\DMTCS{\jour{Discr. Math. Theor. Comput. Sci.} }

\newcommand\AMS{Amer. Math. Soc.}
\newcommand\Springer{Springer-Verlag}
\newcommand\Wiley{Wiley}

\newcommand\vol{\textbf}
\newcommand\jour{\emph}
\newcommand\book{\emph}
\newcommand\inbook{\emph}
\def\no#1#2,{\unskip#2, no. #1,} 
\newcommand\toappear{\unskip, to appear}

\newcommand\arxiv[1]{\texttt{arXiv}:#1}
\newcommand\arXiv{\arxiv}

\newcommand\xand{and }
\renewcommand\xand{\& }

\def\nobibitem#1\par{}

\end{document}